\newtheorem{prop}{Proposition}[section]
\newtheorem{lemm}[prop]{Lemma}
\newtheorem{coro}[prop]{Corollary}
\newtheorem{theo}[prop]{Theorem}
\theoremstyle{definition}
\newtheorem{defi}[prop]{Definition}
\newtheorem{exam}[prop]{Example}
\title{\bf ON INVARIANTS OF SURFACES IN THE 3-SPHERE}
\author{Hiroaki Kurihara}
\date{\empty}
\begin{document}
\maketitle

\begin{abstract}
In this paper we study isotopy classes of closed connected orientable surfaces in the standard $3$-sphere.
Such a surface splits the $3$-sphere into two compact connected submanifolds, and by using their Heegaard splittings, we obtain a $2$-component handlebody-link.
In this paper, we first show that the equivalence class of such a 2-component handlebody-link up to attaching trivial $1$-handles can recover the original surface.
Therefore, we can reduce the study of surfaces in the $3$-sphere to that of $2$-component handlebody-links up to stabilizations.
Then, by using $G$-families of quandles, we construct invariants of $2$-component handlebody-links up to attaching trivial $1$-handles, which lead to invariants of surfaces in the $3$-sphere.
In order to see the effectiveness of our invariants, we will also show that our invariants can distinguish certain explicit surfaces in the $3$-sphere.
\end{abstract}

\section{Introduction}
Throughout this paper, we work in the PL category.
In this paper, we will study closed connected orientable surfaces embedded in the standard 3-sphere  $S^3$.
Such embedded surfaces were extensively studied by Fox, Homma, Tsukui, and Suzuki around 1950s--1970s (e.g., see \cite{fox, hom1, suz2, tsu1}).
In their studies, $3$-manifold theory and fundamental group techniques were mainly used.

A closed connected orientable surface in $S^3$ splits $S^3$ into two compact connected submanifolds of codimension 0 with common boundary.
Then, by considering their Heegaard splittings, we obtain a pair of a handlebody and a compression body from each submanifold.
Moreover, it is known as the Reidemeister--Singer theorem that any two Heegaard splittings of a compact connected orientable 3-manifold become equivalent after finitely many stabilizations.
Hence, given a surface embedded in $S^3$, we can associate a certain 2-component handlebody-link up to stabilizations.
Then, we show that an isotopy class of the original surface is uniquely determined from the equivalence class of an associated $2$-component handlebody-link.
Therefore, we can reduce the study of surfaces in $S^3$ to that of 2-component handlebody-links in a certain sense.

Handlebody-links (-knots)  have recently been studied quite actively (e.g., see \cite{ishii3, ishii1, ishii2}).
For example, by using a $G$-family of quandles, which is an algebraic system with binary operations parametrized by the elements of a group $G$, we can construct invariants of handlebody-links (see \cite{ishii1}).

One of the main purposes of the present paper is to construct invariants of surfaces embedded in $S^3$ by means of the associated 2-component handlebody-links and $G$-families of quandles.

The present paper is organized as follows.
In Section~2, we first recall the notion of compression bodies.
Handlebodies can be regarded as a special class of compression bodies.
We will also recall the notions of Heegaard splittings of $3$-manifolds and their stabilizations.
Then, we review the Reidemeister--Singer theorem, which will be used in Section~3.
We also introduce the notion of $G$-familes of quandles and known results of them.

In Section~3, we construct invariants of surfaces embedded in $S^3$.
Plenty of invariants of handlebody-links were given in \cite{ishii1}.
We first use $X$-colorings for the construction of surfaces in $S^3$.
We observe a variation of the cardinality of the set of $X$-colorings due to a stabilization.
Then, we give a technique which cancels the variation of the cardinality of the set of $X$-colorings after a stabilization of a $2$-component handlebody-link.
Hence, we obtain an invariant of handlebody-links up to stabilizations.
We consider such an invariant for each connected component of a $2$-component handlebody-link. 
Then, as one of the results,  we obtain an invariant as the unordered pair of rational numbers.
Moreover, we will show that, by using  a similar argument, we can construct another invariant of surfaces in $S^3$ by using another quandle invariant which is defined by using cohomology theory of $G$-families of quandles \cite{ishii1}.

In Section~4, we compute our invariants constructed in Theorem \ref{thm:6} for two explicit examples of surfaces in $S^3$.
By using a $G$-family of quandles with a simple algebraic structure, we show that our invariant distinguish the two surfaces by a small amount of calculation.

In Section~5, we will show that for two surfaces in $S^3$, if the associated 2-component handlebody-links are equivalent up to attaching 1-handles, then two surfaces are isotopic. 
We also study surface in $S^3$ from geometric viewpoints.
We construct a geometric invariant of surfaces, which is an analogy of the unknotting tunnel number of a knot.
Then, we study the relationship between a surface in $S^3$ and the closures of the connected components of the exterior of the surface.

Throughout the paper, for a manifold $M$, we denote by $\partial M$ and ${\rm int}(M)$ the boundary and the interior of $M$, respectively.
For a subset $N$ of $M$, denote by ${\rm cl}(N)$ the closure of $N$ in $M$.
Furthermore, for two sets $M_1$ and $M_2$, we denote by $M_{1}\sqcup M_{2}$ the disjoint union of $M_{1}$ and $M_{2}$.
For a set $Z$ we denote by $\# Z$ the cardinality of $Z$.
We also denote by ${\rm id}_{Z}$ the identity map of $Z$.
For a group $G$, we denote by $e_{G}$ the identity element of $G$.
We denote by $g(F)$ the genus of a closed connected orientable surface $F$.
For a compact connected orientable surface $F$ in a compact connected orientable $3$-manifold $M$, we denote by $N(F)$ a regular neighborhood of $F$ in $M$.
A multiset is a set whose elements allow duplications.
A duplication of an element of a multiset is called the multiplicity of the element.

\section{Preliminaries}
In this section, we present definitions and known results that will be used when we construct invariants of embedded surfaces in $S^3$.
The main theme of this paper is to study closed connected orientable surfaces in $S^3$ as we mentioned.

Let $F$ be a closed connected orientable surface embedded in $S^3$.
We denote by $V_{F}$ and $W_{F}$ the closures of the connected components of $S^{3}\setminus F$.
Then, by considering Heegaard splittings of $V_F$ and $W_F$, we will obtain a 2-component handlebody-link from the original surface $F$. 
Moreover, we study the relationship between a surface in $S^3$ and the associated 2-component handlebody-link.
We also recall definitions and related results of $G$-family of quandles.
By the Alexander theorem (\cite{ale}), all $2$-spheres in $S^3$ are isotopic, then we consider surfaces of genus greater than $0$.

\subsection{$3$-manifolds and Heegaard splittings}
In this section, we recall terminologies of $3$-manifolds and Heegaard splittings.
We also introduce several known results related to Heegaard splittings.
Originally, Heegaard splittings were introduced for closed 3-manifolds.
After that, the definition of a Heegaard splitting was extended to compact 3-manifolds, and such a splitting is called a generalized Heegaard splitting (Definition \ref{heegaard}).
We start from the definition of compression bodies.

\begin{defi}
[\cite{{SJS1}, {ScT1}}]
Let $\Sigma$ be a closed connected orientable surface.
Consider the product manifold $\Sigma\times[0,1]$.
Then, attach $2$-handles along mutually disjoint simple closed curves on $\Sigma\times\{0\}$, and cap off every resulting $2$-sphere component by a $3$-handle.
The resulting $3$-manifold $C$ is called a \emph{compression body}.
We denote $\Sigma\times\{1\}$ by $\partial_{+}C$ and $\partial C\setminus \partial_{+}C$ by $\partial_{-}C$.
If $C$ is constructed without any $3$-handle, then $C$ can be obtained from $\partial_{-}C\times [0,1]$ by attaching mutually disjoint $1$-handles on $\partial_{-}C\times\{1\}$.
A compression body $C$ is said to be a {\it handlebody} if $\partial_{-}C=\emptyset$.
We define the {\it genus} of a handlebody as the genus of its boundary.
\end{defi}

Let us recall  handlebody-knots (-links) and their related terminologies.
A {\it handlebody-knot} of genus $g$ is a handlebody of genus $g$ embedded in  $S^3$.
Two handlebody-knots are {\it equivalent} if there exists an ambient isotopy of $S^3$ which maps one to the other. 
Let $H_{1}, H_2, \ldots, H_{n}$ be mutually disjoint handlebody-knots.
Then, $L:=H_{1}\sqcup H_{2}\sqcup\cdots\sqcup H_{n}$ is called a {\it handlebody-link}, or sometimes an \emph{$n$-component handlebody-link}.
Two handlebody-links are ${\it equivalent}$ if there exists an ambient isotopy of $S^3$ which maps one to the other.
A handlebody-knot $H$ (-link $L$) is represented by a spatial trivalent graph $K$ if a regular neighborhood of $K$ is ambient isotopic to $H$ ($L$). 
Here, a spatial trivalent graph is a finite graph embedded in $S^3$ or $\mathbb{R}^3$ such that each vertex is of valence three.
It is known that any handlebody-knot (-link) can be represented by a spatial trivalent graph 
(see \cite{ishii3}).
A {\it diagram} $D$ of a handlebody-knot $H$ ($L$) is a diagram of a spatial trivalent graph $K$ of $H$ ($L$) obtained by projecting $K$ to the $2$-sphere $S^2$ or the $2$-plane $\mathbb{R}^{2}$.
A diagram $D$ of a handlebody-knot (-link) consists of arcs, which are parts of curves, and each of the endpoints of an arc are a vertex or an undercrossing.
At each crossing of $D$, an arc is called an \emph{under-arc} if one of the its endpoints is undercrossing.
Otherwise, an arc is called an \emph{over-arc}. 
An oriented diagram is a diagram of whose each of an arc is oriented. 

Let $M$ and $N$ be a compact 3-manifold and a submanifold of $M$, respectively.
Then, $N$ is said to be {\it properly embedded} in $M$ if $\partial N\subset \partial M$ and $\text{int}(N)\subset \text{int}(M)$.

A properly embedded $2$-disk $D$ in $M$ is said to be {\it inessential} if there exists a 2-disk $D^{'}$ in $\partial M$ such that $\partial D = \partial D^{'}$ and $D\cup D^{'}$ is the boundary of a 3-ball in $M$.
A properly embedded $2$-disk $D$ is said to be {\it essential} if $D$ is not inessential.
Moreover, let $M$ be a compact connected $3$-manifold. 
Then, an essential $2$-disk $D$ in $M$ is said to be {\it separating} if $M\setminus D$ consists of two connected components.
Otherwise, $D$ is said to be {\it non-separating}.

\begin{defi}
Let $M$ be a compact connected orientable $3$-manifold.
Then, $M$ is said to be {\it irreducible} if any $2$-sphere $S$ in $M$ bounds a $3$-ball in $M$.
\end{defi}
We refer the reader to \cite{mac, hem} about the $3$-manifold theory used in this paper.

\begin{defi}[\cite{wal}]
Let $M$ be a compact orientable $3$-manifold.
Let $F$ be a compact orientable surface in ${\rm int}(M)$ or in $\partial M$.
Then, $F$ is said to be {\it compressible} if $F$ satisfies either of the following conditions.
\begin{itemize}
\item[(i)] There exists a properly embedded $2$-disk $D$ in $M$ such that $D\cap {\rm int}(F)=\partial D$ and $\partial D$ is non-contractible in ${\rm int}(F)$
.\item[(ii)] There exists a $3$-ball $B$ such that $B\cap F=\partial B$.
\end{itemize}
The surface $F$ is said to be {\it incompressible} if $F$ is not compressible.
\end{defi}

The following theorem and lemma given in \cite{mac} and \cite{wal}, respectively, will be used  to see a relationship between the set of equivalence classes of 2-component handlebody-links up to attaching $1$-handles and the set of isotopy classes of embedded surfaces.
We begin by introducing the notion of incompressible neighborhood.

\begin{defi}[Incompressible neighborhood, \cite{mac}]\label{incomp}
Let $M$ be an orientable irreducible $3$-manifold.
Let $F$ be a compressible boundary component of $M$.
Then, a $3$-dimensional submanifold $V$ of $M$ is said to be an {\it incompressible neighborhood} of $F$ if $V$ satisfies the following conditions.
\begin{itemize}
\item[(i)]The $3$-manifold $V$ is a compact connected submanifold of $M$ such that $F\subset V\subset M$ and $\partial V\setminus F\subset {\rm int}(M)$.
\item[(ii)]The $2$-manifold $\partial V\setminus F$ is incompressible in $M$.
\item[(iii)]For some $x_{0}\in F$, 
$$
{\rm Image}(\pi_{1}(V,x_{0})\to\pi_{1}(M,x_{0}))={\rm Image}(\pi_{1}(F,x_{0})\to\pi_{1}(M,x_{0})).
$$
\end{itemize}
\end{defi}

\begin{theo}[{\rm \cite{mac}}\label{mac}]
Let $M$ be an orientable irreducible $3$-manifold.
Let $F$ be a compressible boundary component of $M$.
Then, there exists an incompressible neighborhood $V$ of $F$, which is unique up to isotopy of $M$.
\end{theo}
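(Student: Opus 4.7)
The plan is to prove existence by a compression-and-cap construction and uniqueness by innermost-disk arguments combined with the fundamental-group condition~(iii) of Definition~\ref{incomp}.

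For existence, I would fix a product regular neighborhood $F\times[0,1]$ of $F$ in $M$ (with $F$ identified with $F\times\{1\}$), and then select a maximal finite collection $D_{1},\ldots,D_{k}$ of pairwise disjoint, pairwise non-parallel essential $2$-disks properly embedded in ${\rm cl}(M\setminus (F\times[0,1)))$, whose boundaries lie on $F\times\{0\}$ and are non-contractible there; finiteness follows because each compression reduces the genus of $F\times\{0\}$. Let $V':=N(F\cup D_{1}\cup\cdots\cup D_{k})$ and cap off every $2$-sphere component of $\partial V'\setminus F$ by a $3$-ball in $M$, which exists by the irreducibility of $M$, to obtain $V$. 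Then $V$ is a compression body with $\partial_{+}V=F$, so (i) holds; (iii) follows because $\pi_{1}(V)$ is generated by loops that can be pushed into $F$ after an isotopy in $V$. For (ii), suppose $\partial V\setminus F$ admits a compressing disk $D$ in $M$. Performing an innermost-circle reduction on $D\cap(D_{1}\cup\cdots\cup D_{k})$ would produce a new compressing disk for $F$ disjoint from every $D_{i}$ and parallel to none of them, contradicting the maximality of the collection.

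For uniqueness, let $V_{1},V_{2}$ be two incompressible neighborhoods of $F$, and put $\partial V_{1}\setminus F$ and $\partial V_{2}\setminus F$ in general position. Innermost-disk arguments using (ii) for each $V_{i}$ together with the irreducibility of $M$ remove all circles of intersection by isotopies across $3$-balls. Once the inner boundaries are disjoint, a collar adjustment near $F$ places one of $V_{1},V_{2}$ inside the other, say $V_{1}\subset V_{2}$. The cobordism $W:={\rm cl}(V_{2}\setminus V_{1})$ has incompressible boundary, and applying (iii) to both $V_{1}$ and $V_{2}$ yields that the inclusion $V_{1}\hookrightarrow V_{2}$ realizes the same $\pi_{1}$-image in $\pi_{1}(M)$. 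A Waldhausen-type argument, or equivalently iterated compression inside $W$ combined with irreducibility of $M$, then forces $W\cong \partial_{-}V_{1}\times[0,1]$, giving the desired ambient isotopy of $M$ carrying $V_{1}$ onto $V_{2}$.

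The hard part is the final step of uniqueness: extracting a product structure on $W$ from the $\pi_{1}$-equality~(iii) together with the incompressibility of $\partial_{\pm}W$. Without condition~(iii) one could enlarge $V_{1}$ by attaching a non-trivial $1$-handle in $M$ to obtain a genuinely different candidate whose outer boundary remains incompressible, so ruling this out is precisely where the fundamental-group condition is indispensable and constitutes the technical core of the theorem.
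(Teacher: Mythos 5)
First, a point of calibration: the paper contains no proof of this theorem at all. It is imported as a known result from \cite{mac} (McCullough--Miller), where it is the existence and uniqueness of the characteristic compression body neighborhood, going back to Bonahon. So your proposal can only be compared with the source, and in outline it does follow the same standard route: compress $F$ maximally, cap resulting spheres using irreducibility to get a compression body $V$ with $\partial_{+}V=F$, and prove uniqueness by isotoping frontiers off each other and recognizing a product region. As a proof, however, the sketch has concrete gaps at exactly the points where the real work lies.

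For existence, your maximality contradiction is incomplete: a compressing disk $D$ for $\partial V\setminus F$ has boundary on $\partial V\setminus F$, not on $F\times\{0\}$, so it does not directly enlarge your disk system. You must first isotope $\partial D$ off the scar disks of the surgeries, transfer it across the product region to a curve on $F\times\{0\}$, and then handle the case where the transferred curve bounds a disk in $F\times\{0\}$ containing scars (there the sphere formed by $D$ and that disk, together with irreducibility, shows $D$ was inessential); note also that a compressing disk may lie on the $V$-side, since Definition~\ref{incomp}(ii) demands incompressibility in $M$. For uniqueness, making the two frontier surfaces disjoint does \emph{not} by itself yield the nesting $V_{1}\subset V_{2}$: the frontier components can sit in non-nested configurations, and it is precisely condition (iii) that forces each frontier component of one neighborhood to be parallel into the other; likewise, invoking a ``Waldhausen-type argument'' on $W=\mathrm{cl}(V_{2}\setminus V_{1})$ presupposes a homotopy between $\partial V_{1}\setminus F$ and $\partial V_{2}\setminus F$ (or a homotopy equivalence of $W$ with a product), and extracting that from the mere equality of $\pi_{1}$-images in (iii) is the technical core of \cite{mac}, not a routine step. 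Finally, your closing claim about the indispensability of (iii) is wrong as stated: enlarging $V_{1}$ by a $1$-handle makes the new frontier compressible in $M$ --- the meridian (cocore) disk of the tube is a compressing disk --- so that candidate is already excluded by (ii). The genuine role of (iii) is different: it prevents $V$ from absorbing a component $X$ of $\mathrm{cl}(M\setminus V)$ all of whose boundary surfaces are incompressible, since $V\cup X$ would satisfy (i) and (ii) but have strictly larger $\pi_{1}$-image than that of $\pi_{1}(F)$.
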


For the definition of an incompressible neighborhood of a compressible boundary component, the reader is referred to Definition \ref{incomp}.

\begin{lemm}[{\rm\cite{wal}}\label{irreducible}]
Let $M$ be a compact connected orientable $3$-manifold.
Let $F$ be the union of mutually disjoint incompressible surfaces in $M$.
We set $M^{'}={\rm cl}(M\setminus N(F))$.
Then $M^{'}$ is irreducible if and only if $M$ is irreducible.
\end{lemm}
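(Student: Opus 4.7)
The statement is an if-and-only-if, so my plan is to prove each implication separately. Both arguments work with $2$-spheres in $M$ or $M'$ and combine standard $3$-manifold techniques with the incompressibility of $F$.

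For the forward direction ($M'$ irreducible implies $M$ irreducible), I take a $2$-sphere $S \subset M$ and, after isotopy, arrange $S$ to meet $F$ transversely in a finite disjoint union of simple closed curves; I induct on their number $n$. When $n = 0$, shrinking $N(F)$ slightly places $S$ inside $M'$, and the assumed irreducibility of $M'$ supplies a $3$-ball $B \subset M' \subset M$ with $\partial B = S$. For the inductive step, I choose a circle $c \subset S \cap F$ that is innermost on $S$, bounding a disk $D \subset S$ with $\mathrm{int}(D) \cap F = \emptyset$. Since $F$ is incompressible, $D$ cannot be a compressing disk, so $c$ bounds a disk $D'$ on $F$; taking $D'$ innermost on $F$ ensures $\mathrm{int}(D') \cap S = \emptyset$. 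A small isotopy of $S$ that pushes $D$ across $D'$ produces a new $2$-sphere $S'$ with $|S' \cap F| < n$, closing the induction.

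For the reverse direction ($M$ irreducible implies $M'$ irreducible), I take a $2$-sphere $S \subset M'$. Since $S \subset M$ and $M$ is irreducible, $S$ bounds a $3$-ball $B \subset M$; my goal is to show $B \cap F = \emptyset$, whereupon shrinking $N(F)$ places $B$ inside $M'$. Suppose a component $F_0$ of $F$ lies inside $B$. Then $F_0$ is a closed orientable surface embedded in a $3$-ball, and since any compressing disk for $F_0$ in $B$ is automatically one in $M$, the incompressibility of $F$ excludes a compressing disk for $F_0$ in $B$. The loop theorem applied to $F_0 \subset B$ then forces $\pi_1(F_0) \hookrightarrow \pi_1(B) = 1$, so $F_0 \cong S^2$. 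But then irreducibility of $M$ makes $F_0$ bound a $3$-ball in $M$, so condition (ii) of the definition of compressibility renders $F_0$ compressible, contradicting the hypothesis. Hence $B$ is disjoint from $F$ and the argument concludes.

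I expect the innermost-disk step of the forward direction to be the trickiest part: to guarantee that the local isotopy pushing $D$ across $D'$ strictly decreases $|S \cap F|$, I must ensure $\mathrm{int}(D')$ meets no other intersection circles, which is why the nested innermost choices (first $c$ innermost on $S$, then $D'$ innermost on $F$) are needed. The reverse direction is conceptually cleaner but hinges on condition (ii) of the definition of compressibility to rule out spherical components of $F$ inside a $3$-ball in $M$; without that condition the argument would leave an unwanted case.
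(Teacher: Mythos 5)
The paper itself gives no proof of this lemma --- it is quoted from Waldhausen \cite{wal} --- so your argument must be measured against the standard splitting argument. Your reverse direction ($M$ irreducible $\Rightarrow$ $M'$ irreducible) is essentially correct: any component of $F$ meeting the ball $B$ must lie in $\mathrm{int}(B)$ since it misses $\partial B=S$, the corollary of the Loop Theorem/Dehn's Lemma for two-sided surfaces (applied after splitting $B$ along $F_0$; the Loop Theorem itself concerns boundary surfaces, so state it as that corollary) rules out genus $\geq 1$, and condition (ii) of the paper's Definition 2.3 disposes of spherical components, exactly as you say.

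The forward direction, however, has a genuine gap at the step you yourself flag as the trickiest. Once you choose $c$ innermost on $S$, the disk $D'\subset F$ with $\partial D'=c$ is \emph{unique} (no component of $F$ is a sphere: a spherical component would bound a ball in the irreducible $M'$ and violate condition (ii)), so ``taking $D'$ innermost on $F$'' is not an available choice --- innermostness is a property of the curve $c$, not of $D'$, and $\mathrm{int}(D')$ may well meet $S$. If instead you re-select a curve of $S\cap F$ that is innermost on $F$ inside $D'$ (each such curve does bound a subdisk $D''\subset D'$ with $\mathrm{int}(D'')\cap S=\emptyset$), that curve is generally \emph{not} innermost on $S$, so neither disk it bounds on $S$ has interior disjoint from $F$; the proposed ``push $D$ across $D'$'' is then not an embedded isotopy (other sheets of $S$ obstruct the region swept) and does not obviously reduce $\#(S\cap F)$. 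The standard repair is surgery rather than isotopy: surger $S$ along the innermost-on-$F$ disk $D''$ using a bicollar $D''\times[-1,1]$ meeting $S$ only in $\partial D''\times[-1,1]$, producing two disjoint embedded spheres $S_1$, $S_2$, each meeting $F$ in strictly fewer circles; by induction each bounds a ball $B_i$; and then a recombination case analysis --- since $S_1\cap S_2=\emptyset$, the balls are disjoint or nested, and in each case one exhibits a ball bounded by the original $S$ (e.g.\ $B_1\cup(D''\times[-1,1])\cup B_2$ in the disjoint case) --- completes the induction. This recombination lemma is where the real content of the hard direction lives, and it is absent from your proposal; your overall architecture (irreducibility of $M'$ only in the base case, incompressibility in the inductive step) is the right one, but the inductive step as written would fail.
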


We introduce the notion of Heegaard splittings of 3-manifolds.
Originally, Heegaard splittings were introduced to represent a closed connected orientable 3-manifold by the union of two handlebodies along their common boundaries.
In the context of knot theory, the Heegaard genus of a Heegaard splitting of the exterior $E(K)$ of a knot $K$ is related to the tunnel number of $K$.

\begin{defi}[Heegaard Splittings\label{heegaard}]
Let $M$ be a compact connected orientable 3-manifold possibly with boundary.
Fix a partition of $\partial M$ as $\partial M =\partial_{1}M\sqcup\partial_{2}M$.
A {\it Heegaard splitting} of $M$ is a decomposition of $M$ into two compression bodies $C_1$ and $C_2$ such that $M=C_{1}\cup C_{2}$, $\partial_{+}C_{1}=S=\partial_{+}C_{2}$, $C_{1}\cap C_{2} =S$, $\partial_{-}C_{1}=\partial_{1}M$, and $\partial_{-}C_{2}=\partial_{2}M$.
We call $\partial_{+}C_{1}=S=\partial_{+}C_{2}$ a {\it Heegaard surface} of $M$.
We denote by $(M, S)$ a Heegaard splitting of $M$ with a Heegaard surface $S$.
We also denote by $C_{1}\cup_{S}C_{2}$ a Heegaard splitting of $M$ consisting of two compression bodies $C_{1}$ and $C_2$ with a Heegaard surface $S$.
The minimal genus of Heegaard surfaces of $M$ is called the {\it Heegaard genus} of $M$.
In particular, a Heegaard splitting of $M$ with the minimal genus is called a \emph{minimal genus Heegaard splitting} of $M$.
\end{defi}

Concerning Heegaard splittings of 3-manifolds, it is known as Moise's theorem that
every compact connected orientable $3$-manifold possibly with boundary admits a Heegaard splitting
 {\rm (\cite{{moi}})}.

We define a parallel arc in a compression body.
Parallel arcs will be used when we define the stabilization of a Heegaard splitting of a $3$-manifold.
\begin{defi}[Parallel arc]
Let $C$ be a compression body.
A properly embedded arc $\alpha$ in $C$ with $\partial C\cap \partial\alpha=\partial_{+}C\cap\partial\alpha$ is {\it parallel} to an arc $\beta$ in $\partial_{+}C$ with $\partial\beta=\partial\alpha$ if there is an embedded disk $D$ in $C$ such that $\partial D = \alpha \cup \beta$.
\end{defi}

\begin{defi}[Stabilization\label{def:sbz}]
Let $M$ be a compact connected orientable 3-manifold possibly with boundary.
Let $(M, S)$ be a Heegaard splitting of $M$ with a Heegaard surface $S$. 
The following procedure to construct a new Heegaard splitting $(M, S^{'})$ from $(M, S)$ is called a {\it stabilization}.
Suppose that $M=C_{1}\cup_{S}C_{2}$ is a Heegaard splitting of $M$ consisting of two compression bodies $C_1$ and $C_{2}$ with $\partial_{+}C_{1}=S=\partial_{+}C_{2}$.
Take a parallel arc $\alpha$ in $C_{2}$ to an arc $\beta$ in $\partial_{+}C_{2}$.
Then, we remove a tubular neighborhood $N(\alpha)$ of $\alpha$ from $C_2$, take the closure of $C_{2}\setminus N(\alpha)$, and add $N(\alpha)$ to $C_1$, namely, we have $C_{1}^{'}:=C_{1}\cup N(\alpha)$ and $C_{2}^{'}:=\text{cl}(C_{2}\setminus N(\alpha))$. 
We can show that $C_{1}^{'}$ and $C_{2}^{'}$ are also compression bodies satisfying $C_{1}^{'}\cup_{S^{'}}C_{2}^{'}=M$, where $S^{'}:=\partial C^{'}_{1}=\partial C^{'}_{2}$.
\end{defi}

Hence we obtained a new Heegaard splitting $(M,S^{'})$ of $M$ from the given Heegaard splitting $(M, S)$.
We also have $g(S^{'})=g(S)+1$.

We give an example of a Heegaard splitting of a handlebody of genus $g$.
\begin{exam}[The trivial Heegaard splitting]
Let $H_{g}$ be a handlebody of genus $g$.
Set $\partial H_{g}=F$.
Consider a parallel surface $F^{'}$ to $F$ inside of $H_g$.
Then, the closures of the connected components of the exterior of $F^{'}$ consists of a handlebody of genus $g$ and a compression body with the same boundaries.
These handlebody and compression body give a Heegaard splitting of $H_g$.
Such a splitting is called the {\it trivial splitting} of $H_g$.
\end{exam}

Concerning Heegaard splittings of the handlebody of any genus $g$, it is known that  
all Heegaard splittings of a handlebody of genus $g$ are standard, that is, they are obtained from the trivial Heegaard splitting by applying a finite number of stabilizations {\rm (\cite{ScT1})}.

We introduce the notion of the equivalence of two Heegaard splittings of a compact connected orientable 3-manifold $M$ possibly with boundary.
Let ($M,S$) and ($M,S^{'}$) be Heegaard splittings of $M$ with Heegaard surfaces $S$ and $S^{'}$.
They are {\it equivalent} if there exists an ambient isotopy of $M$ which maps $S$ to $S^{'}$.
The following theorem, known as the Reidemeister--Singer theorem, plays an important role when we construct invariants of embedded surfaces in $S^3$.

\begin{theo}[\label{thm:1}{\rm Reidemeister--Singer theorem \cite{{rei}, {sin}}}]
Let $M$ be a compact connected orientable $3$-manifold possibly with boundary.
We fix a partition of $\partial M$ as $\partial M=\partial_{1}M\sqcup\partial_{2}M$. 
Then, any two Heegaard splittings of $M$ with the fixed partition of $\partial M$ become equivalent after finitely many  stabilizations.
\end{theo}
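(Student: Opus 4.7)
The plan is to follow the classical strategy of Reidemeister and Singer, using the interplay between Heegaard splittings and PL triangulations of $M$. The essential idea is that a triangulation of $M$ naturally produces a Heegaard splitting (after choosing how to handle the boundary partition), that elementary subdivisions of the triangulation correspond to stabilizations, and that any two triangulations of a PL $3$-manifold admit a common subdivision.

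First, I would show that every Heegaard splitting $(M,S)$ with $M = C_{1}\cup_{S}C_{2}$ is induced by some triangulation $T$ of $M$ compatible with the partition $\partial M = \partial_{1}M\sqcup\partial_{2}M$. Concretely, choose a one-dimensional spine $K_{1}$ of $C_{1}$ relative to $\partial_{1}M$ and a spine $K_{2}$ of $C_{2}$ relative to $\partial_{2}M$, build a triangulation $T$ of $M$ that contains $K_{1}\cup K_{2}$ as a subcomplex, and verify that $C_{1}$ is a regular neighborhood of $\partial_{1}M\cup K_{1}$ while $C_{2}$ is the closed complement, which is a regular neighborhood of $\partial_{2}M\cup K_{2}$ in the dual cell decomposition of $T$.

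Next, I would establish the key local-to-global lemma: if $T'$ is obtained from $T$ by a single stellar subdivision (subdividing an edge, a triangle, or a tetrahedron by adding a new vertex), then the Heegaard splitting induced by $T'$ is obtained from the one induced by $T$ by at most one stabilization in the sense of Definition \ref{def:sbz}. The crucial observation is that adding a new vertex in the interior of a simplex corresponds locally to drilling a tube along a parallel arc in one of the compression bodies and re-attaching it to the other, which is exactly the stabilization move. Chaining this lemma along a sequence of stellar moves relating two triangulations $T_{1}$ and $T_{2}$ (whose existence is guaranteed by Alexander's theorem in PL topology, or equivalently by passing to a common subdivision) shows that the Heegaard splittings induced by $T_{1}$ and $T_{2}$ become equivalent after finitely many stabilizations.

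The hard part is twofold. The first difficulty is the realization step: carefully constructing a triangulation that realizes a prescribed Heegaard splitting while respecting the partition of $\partial M$. Since compression bodies have the distinguished boundary $\partial_{-}$ that must be glued to the correct side of the partition, one has to triangulate $\partial M$ in a way that is simultaneously compatible with the spines $K_{1}, K_{2}$ and with both $\partial_{1}M$ and $\partial_{2}M$. The second difficulty, and in my view the main obstacle, is verifying that an elementary subdivision corresponds \emph{exactly} to a stabilization; this requires a delicate local analysis showing that the new dual handle is attached along a disk parallel to $\partial_{+}C_{i}$, so that the resulting decomposition indeed satisfies the compression body conditions. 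Special care is needed for subdivisions occurring near $\partial M$ or near the Heegaard surface $S$ itself, to keep $\partial_{-}C_{1}=\partial_{1}M$ and $\partial_{-}C_{2}=\partial_{2}M$ throughout the argument.
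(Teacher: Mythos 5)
The paper offers no proof of Theorem \ref{thm:1}: it is quoted as a classical result with references to \cite{rei, sin}, and the relative version actually used here (splittings into compression bodies with a fixed partition of $\partial M$) is the one treated in the cited lecture notes \cite{SJS1}. So your proposal can only be measured against the classical arguments, and in outline you have reproduced the classical Reidemeister--Singer strategy faithfully: a triangulation induces a splitting (regular neighborhood of the $1$-skeleton versus the dual $1$-skeleton), subdivisions change the induced splitting only by stabilizations, and any two triangulations of a PL manifold admit a common stellar refinement, by Alexander's theorem. That is the right plan.

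Two concrete defects remain. First, your key lemma is misstated: a single stellar subdivision does \emph{not} correspond to ``at most one stabilization.'' For closed $M$ the genus of the surface induced by a triangulation with $V$ vertices and $E$ edges is $E-V+1$, so starring a tetrahedron (one new vertex, four new edges) raises the genus by three, and starring an edge whose link is a $k$-cycle raises it by $k$. The correct statement --- that the splitting induced by a subdivision is obtained from that of the original triangulation by \emph{finitely many} stabilizations --- is all the chaining argument needs, but it is exactly the delicate local analysis you identify as the main obstacle, and your sketch asserts it rather than performs it. Second, the version required by this paper is relative: $\partial M=\partial_{1}M\sqcup\partial_{2}M$ with $\partial_{-}C_{i}=\partial_{i}M$ preserved throughout. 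You flag this in the realization step but leave it unresolved, and it is genuinely where the work lies: the naive regular-neighborhood-of-the-$1$-skeleton construction produces handlebodies, not compression bodies, so one must pass to relative handle decompositions or spines rel $\partial_{i}M$ (as you suggest) and then redo the subdivision-versus-stabilization comparison in that relative setting, checking in particular that stabilizations in the sense of Definition \ref{def:sbz} never move handles across the partition. As a roadmap your proposal matches the classical route; as a proof it still has these two holes, and for the relative statement the honest fix is to follow the treatment in \cite{SJS1}.
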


The following is also known as Waldhausen's theorem.
\begin{theo}[\label{walthm}{\rm \cite{wal1}}]
A Heegaard splitting of $S^3$ is unique up to isotopy in every genus of a Heegaard surface.
\end{theo}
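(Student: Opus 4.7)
The plan is to prove Theorem \ref{walthm} by induction on the genus $g$ of the Heegaard surface, reducing uniqueness in genus $g$ to uniqueness in genus $g-1$ via a destabilization argument, anchored at the genus $0$ case.

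For the base case $g = 0$, a Heegaard surface is an embedded $2$-sphere. The Alexander theorem (\cite{ale}) implies that every embedded $2$-sphere in $S^3$ bounds a $3$-ball on each side, and that any two embedded $2$-spheres in $S^3$ are ambient isotopic, so the genus $0$ Heegaard splitting of $S^3$ is unique up to isotopy.

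For the inductive step, assume uniqueness in genus $g-1$, and let $S^3 = V \cup_F W$ be a Heegaard splitting of genus $g \geq 1$. The key claim is that $(S^3, F)$ is a stabilization of some genus $(g-1)$ Heegaard splitting $(S^3, F_0)$ in the sense of Definition \ref{def:sbz}. To establish this, I would produce a pair of \emph{canceling disks}: a properly embedded essential disk $D \subset V$ and a properly embedded essential disk $D' \subset W$ whose boundaries $\partial D, \partial D' \subset F$ meet transversely in a single point. A regular neighborhood of $D \cup D'$ is then a $3$-ball attached to $F$ along a once-punctured torus; compressing $F$ along $D$ yields a genus $(g-1)$ Heegaard surface $F_0$ such that $F$ is recovered from $F_0$ by exactly one stabilization. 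Combined with the inductive hypothesis that $(S^3, F_0)$ is unique up to isotopy and the fact that stabilization is well-defined up to ambient isotopy, this yields uniqueness in genus $g$.

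The main obstacle is the existence of the canceling pair. At the homological level, Mayer--Vietoris applied to $S^3 = V \cup W$ gives an isomorphism $H_1(F) \cong H_1(V) \oplus H_1(W)$, so the two Lagrangian subspaces $\ker(H_1(F) \to H_1(V))$ and $\ker(H_1(F) \to H_1(W))$ are transverse and jointly span $H_1(F)$. A symplectic basis adapted to both Lagrangians produces simple closed curves $\alpha, \beta \subset F$ bounding disks in $V$ and $W$ respectively, with algebraic intersection number $\pm 1$. Promoting this algebraic intersection to a geometric one---that is, producing embedded compressing disks on each side that meet in a single point rather than merely algebraically once---is the true difficulty: it requires the Loop Theorem and Dehn's Lemma together with an iterated innermost-disk / outermost-arc cut-and-paste argument on complete meridian systems, using the irreducibility of $V$, $W$, and $S^3$ (the last via Alexander, compare Lemma \ref{irreducible}) to strictly decrease the intersection complexity at each stage. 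This is essentially the technical core of Waldhausen's original argument in \cite{wal1}; once the canceling pair is secured, the destabilization and subsequent induction are routine.
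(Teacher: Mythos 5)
The paper offers no proof of this statement: it is quoted as Waldhausen's theorem with a citation to \cite{wal1}, so there is no internal argument to compare yours against. Judged on its own, your skeleton is the standard reduction: Alexander's theorem \cite{ale} anchors genus $0$; the Mayer--Vietoris computation (using $H_1(S^3)=H_2(S^3)=0$) correctly gives $H_1(F)\cong H_1(V)\oplus H_1(W)$ and hence transverse Lagrangians; and the step from a canceling pair of disks to a destabilization, plus the lemma (which you use implicitly) that stabilization of a fixed splitting is unique up to ambient isotopy, is routine. Two smaller points deserve justification: that a primitive class in $\ker(H_1(F)\to H_1(V))$ is realized by the boundary of an \emph{embedded} essential disk in the handlebody $V$ (true, via handle slides on a complete meridian system, but it needs an argument), and the well-definedness of stabilization just mentioned.

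The genuine gap is at exactly the step you flag as the ``true difficulty'': promoting the algebraic intersection number $\pm 1$ to an embedded pair $D\subset V$, $D'\subset W$ with $\partial D\cap\partial D'$ a single point. Your proposed remedy --- an iterated innermost-disk/outermost-arc cut-and-paste on complete meridian systems, driven by irreducibility of $V$, $W$, and $S^3$ --- does not prove this, and as stated it would stall: cut-and-paste moves on meridian systems do not monotonically decrease a complexity while preserving the homological duality data, and irreducibility alone cannot force a geometric dual pair. Indeed, past the homology computation your scheme uses nothing about $S^3$ beyond irreducibility, yet the analogous destabilization statement fails in general irreducible $3$-manifolds (Casson and Gordon produced strongly irreducible splittings of arbitrarily high genus), and algebraically dual Lagrangians never by themselves yield geometrically dual compressing disks. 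This is why Waldhausen's original proof is a delicate global analysis of intersecting meridian-disk systems in $S^3$, and why the later proofs (Scharlemann--Thompson via thin position, Rubinstein--Scharlemann via sweep-out graphics) had to introduce genuinely new machinery rather than an innermost-disk induction. As written, your proposal reduces the theorem to its hard core and then appeals to \cite{wal1} for that core, so it is an accurate roadmap but not a proof.
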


In Definition \ref{def:sbz}, we introduced the notion of the stabilization of a Heegaard splitting of a compact conneccted orientable $3$-manifold.
Then, let us define a stabilization of handlebody-links.
This definition is induced from the definition of the stabilization of a Heegaard splitting of a compact connected orientable $3$-manifold with connected boundary.
Two handlebody-links $L_1$ and $L_2$ are said to be \emph{separated} if there exists $3$-balls $B^{3}_{1}$ and $B^{3}_{2}$ in $S^3$ such that $B^{3}_{1}\cap B^{3}_{2}=\partial B^{3}_{1}\cap \partial B^{3}_{2}$, $L_{1}\subset {\rm int}(B^{3}_{1})$, and $L_{2}\subset {\rm int}(B^{3}_{2})$.
Similarly, $n$ handlebody-links $L_{1}, L_{2}, \ldots, L_{n}$ are said to be \emph{separated} if there exists $3$-balls $B^{3}_{1}, B^{3}_{2}, \ldots, B^{3}_{n}$ in $S^3$ such that $B^{3}_{i}\cap B^{3}_{j}=\partial B^{3}_{i}\cap \partial B^{3}_{j}$ (whenever $i\neq j$) and $L_{1}\subset {\rm int}(B^{3}_{1}), L_{2}\subset {\rm int}(B^{3}_{2}), \ldots, L_{n}\subset {\rm int}(B^{3}_{n})$.



\begin{defi}[Stabilization of handlebody-links]\label{hls}
Let $H$ be a handlebody-knot.
Then, the {\it stabilization} of $H$ is the disk sum of $H$ and the standard solid torus $T$, denoted by $H\natural T$, by a $2$-disk $D$ on $\partial H$ so that the connected components of $H\natural T\setminus N(D)$ are separated by a $3$-ball.
Let $L=H_{1}\sqcup H_{2}$ be a 2-component handlebody-link.
Let $T^{1}$ and $T^{2}$ be the standard solid tori.
Then, the {\it stabilization} of $L$ is stabilizations of $H_{1}$ with $T^{1}$ by a $2$-disk $D_1$ on $\partial H_1$ or that of $H_{2}$ with $T^{2}$ by a $2$-disk $D_2$ on $\partial H_2$ so that the connected components of ${\rm cl}(L\natural T^{1}\natural T^{2} \setminus \sqcup_{i=1}^{2}N(D_{i}))$ are separated by $3$-balls in $S^3$.
Two handlebody-links are \emph{stably equivalent} if they are equivalent after finitely many stabilizations.

\end{defi}

We note that if both $H_1$ and $H_2$ are stabilized, then the attached solid tori are separated by some $3$-balls in $S^3$.

\subsection{$G$-family of quandles and handlebody-links}

Let us now go on to introducing the notions of quandles and $G$-families of quandles. 
A lot of invariants of links have been obtained by using quandles.
Moreover, $G$-families of quandles can be used for studying handlebody-links and gives plenty of invariants(refer to \cite{ishii1}).
We start by the definition of  quandles.
We refer the reader \cite{ishii1} about the theory of  $G$-families of quandles used in the paper.

\begin{defi}
Let $X$ be a non-empty set with a binary operation $\ast:X \times X\rightarrow X$. The pair $(X,\ast)$ is a {\it quandle} if $\ast$ satisfies the following conditions.
\begin{itemize}
\item [(i)] $x\ast x = x$ for any $x\in X$.
\item [(ii)] The map $S_{x}:X\rightarrow X$ defined by $S_{x}(y) = y\ast x$ is bijective for any $x\in X$.
\item [(iii)] $(x\ast y) \ast z = (x \ast z) \ast (y \ast z)$ for any $x, y, z \in X$ .
\end{itemize}
\end{defi}

\begin{defi}
Let $G$ be a group and $X$ be a non-empty set with a family of binary operations $\ast_{g}:X\times X\rightarrow X$ parametrized by $g \in G$, respectively. The pair $(X, \{\ast_{g}\}_{g\in G})$ is a {\it $G$-family of quandles} if for any $x,y,z\in X$ and any $g,h\in G$,
 $\ast_{g}$ satisfies the following conditions (see \cite{ishii1}).
\begin{itemize}
\item[(i)] $x \ast_{g} x = x$.
\item[(ii)]$x \ast_{gh} y = (x \ast_{g} y ) \ast_{h} y$ and $x \ast_{e_{G}} y = x$.
\item[(iii)]$(x\ast_{g}y)\ast_{h}z = (x\ast_{h}z)\ast_{h^{-1}gh}(y\ast_{h}z)$.
\end{itemize}
 \end{defi}

We note that for a $G$-family of quandles, the pair ($X,\ast_{g}$) is a quandle for each $g\in G$.

\begin{defi}
Let $G$ and $(X,\{\ast_{g}\}_{g\in G})$ be a group and a $G$-family of quandles, respectively. 
Set $Q=X\times G$.
We define the binary operation $\ast:Q\times Q\rightarrow Q$ by $(x,g)\ast(y,h)=(x\ast_{h}y,h^{-1}gh)$. Then, the pair $(Q,\ast)$ is a quandle called the {\it associated quandle} of $X$.
\end{defi}

Using a $G$-family of quandles $(X, \{\ast_{g}\}_{g\in G})$, we can introduce the notion of an $X$-coloring for an oriented diagram $D$ of a habdlebody-link. 
We denote by $\mathscr{A}(D)$ the set of arcs of $D$.
The normal orientation of an oriented arc is given by rotating an orientation of the arc counterclockwise by $\pi/2$.
The normal orientation of an oriented arc is represented by an arrow on the arc (see Figure \ref{fig:13}).
Using a $G$-family of quandles $(X, \{\ast_{g}\}_{g\in G})$, we can introduce the notion of an $X$-coloring for an oriented diagram $D$ of a habdlebody-link.

\begin{defi}[$X$-colorings]
Let $G$ be a group and ($X,\{\ast_{g}\}_{g\in G}$) a $G$-family of quandles, respectively.
Let $D$ be an oriented diagram of a handlebody-link.
A map $C:\mathscr{A}(D)\rightarrow Q=X\times G$ is an {\it $X$-coloring} of $D$ if $C$ satisfies the following conditions:
\begin{itemize}
\item [(i)] at each crossing $\chi$ of $D$, the map $C$ satisfies $C(\chi_{2})=C(\chi_{1}) \ast C(\chi_{3})$, and
\item [(ii)] at each vertex $\omega$ of $D$ and for the two natural projections $p_{X}:Q\rightarrow X$ and $p_{G}:Q\rightarrow G$, the map $C$ satisfies
\begin{eqnarray*}
\left\{
\begin{array}{l}
p_{X}\circ C(\alpha_{1})=p_{X}\circ C(\alpha_{2})=p_{X}\circ C(\alpha_{3}),\\
(p_{G}\circ C(\alpha_{1}))^{\varepsilon(\omega,\alpha_{1})}\cdot (p_{G}\circ C(\alpha_{2}))^{\varepsilon(\omega,\alpha_{2})}\cdot (p_{G}\circ C(\alpha_{3}))^{\varepsilon(\omega,\alpha_{3})}=e_{G}, 
\end{array}
\right.
\end{eqnarray*}
where $\chi_{1}$, $\chi_{2}$ are under arcs and $\chi_{3}$ is an over-arc of $D$ (Figure \ref{fig:13}).
Furthermore, $\varepsilon(\omega,\alpha_{i})$ is the sign of an arc $\alpha_{i}$ at $\omega$ which is defined as follows (see Figure \ref{fig:14}) :
\begin{eqnarray*}
\varepsilon(\omega,\alpha_{i})=
\left\{
\begin{array}{l}
1,\quad\text{if the orientation of $\alpha_{i}$ points into $\omega$,}\\
-1,\quad\text{otherwise}.
\end{array}
\right.
\end{eqnarray*}
\end{itemize}
We denote by $\text{Col}_{X}(D)$ the set of $X$-colorings of $D$.
\end{defi}
\begin{figure}[H]
\begin{center}
\includegraphics[width=12cm, height=8cm]{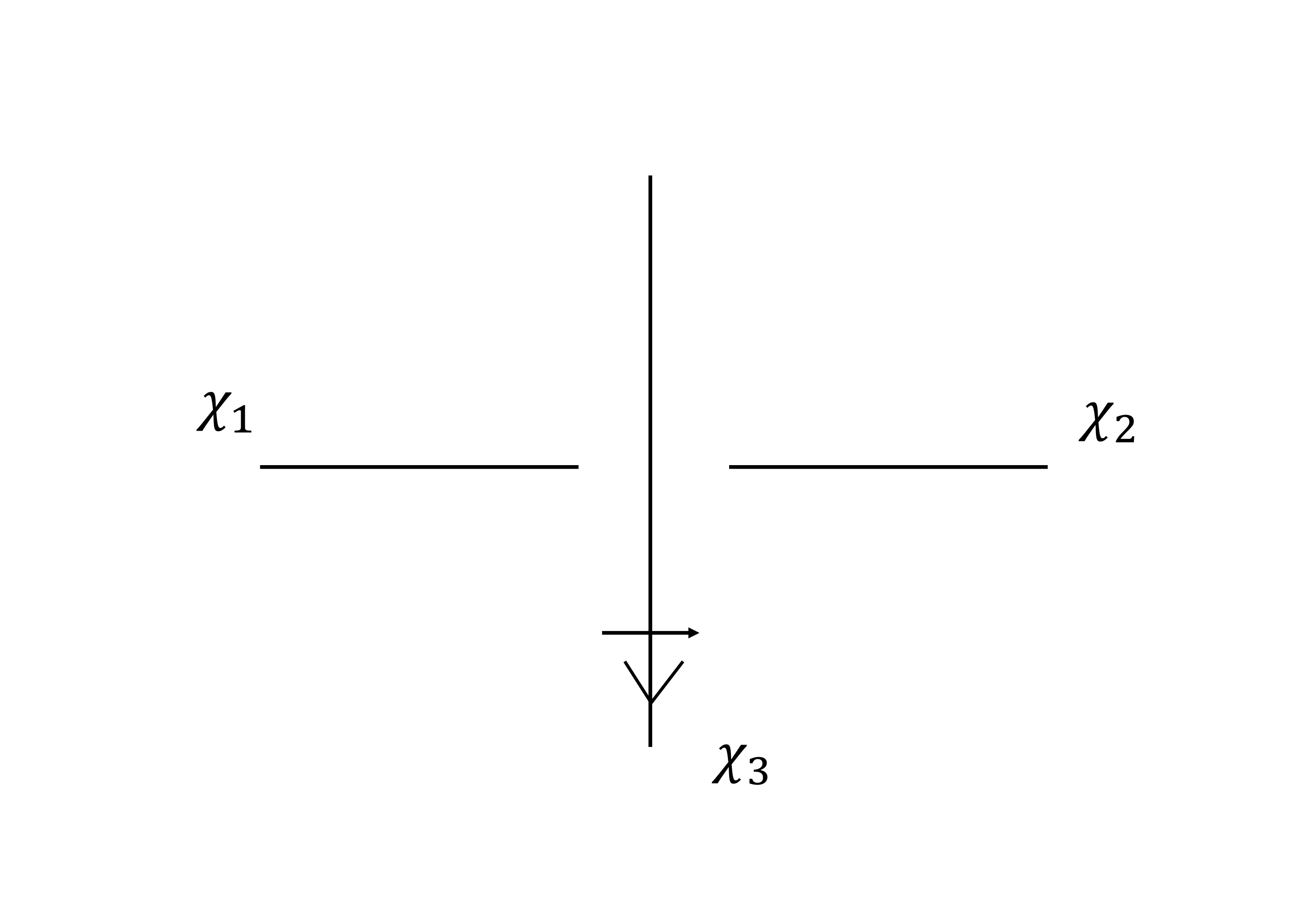}
\end{center}
\vspace{-13mm}
\caption{Crossing $\chi$ and the normal orientation of the over-arc $\chi_{3}$} 
\label{fig:13}
\end{figure}

\begin{figure}[H]
\begin{center}
\includegraphics[width=12cm, height=8cm]{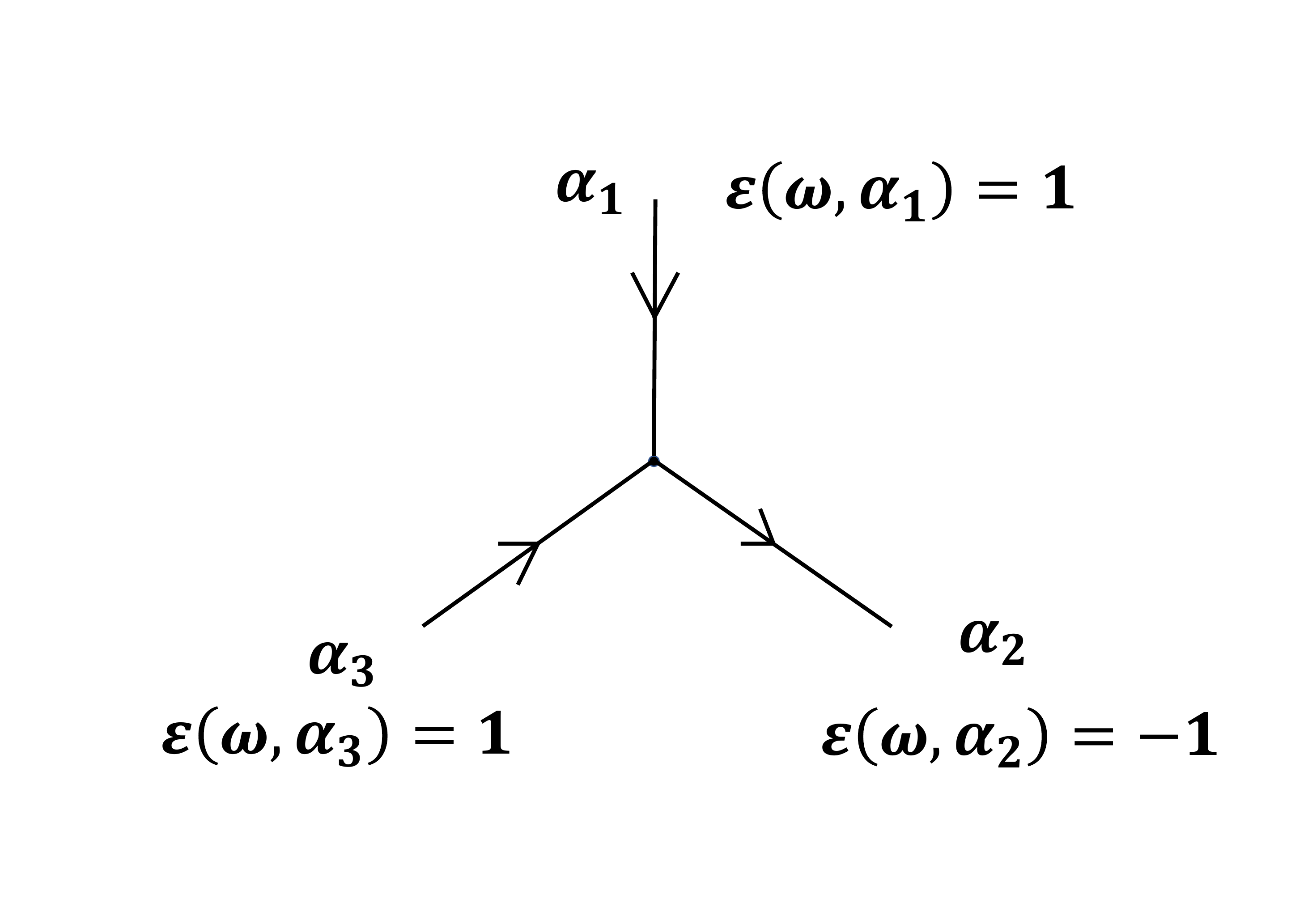}
\end{center}
\vspace{-13mm}
\caption{Vertex $\omega$}
\label{fig:14}
\end{figure}

\begin{theo}\label{thm:4}{\rm (\cite{ishii1})}
Let $G$ be a group and $(X,\{\ast_{g}\}_{g\in G})$ be a $G$-family of quandles, respectively.
Let $D$ be a diagram of an oriented spatial trivalent graph of a handlebody-link.
Then, the cardinality $ \#{\rm Col}_{X}(D)$ is an invariant of the handlebody-link.
\end{theo}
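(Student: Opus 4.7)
The plan is to reduce the theorem to a standard local move analysis. Two oriented diagrams of equivalent handlebody-links are related by a finite sequence of Reidemeister-type moves for spatial trivalent graphs (Ishii's moves R1--R6, consisting of the three classical moves R1--R3 away from vertices, moves R4 and R5 for passing an arc over or under a vertex, and the IH-move R6 that changes the combinatorial shape near a pair of vertices), together with orientation-reversal moves that are compatible with the vertex condition. It therefore suffices to produce, for every such local move, a bijection between the sets of $X$-colorings of the two diagrams.

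For R1--R3, I would argue exactly as in the quandle case, using the associated quandle $(Q,\ast)$ of the $G$-family. The crossing condition $C(\chi_{2})=C(\chi_{1})\ast C(\chi_{3})$ is formally identical to the standard quandle coloring rule, so the three quandle axioms for $(Q,\ast)$ (idempotency, bijectivity of $S_{(y,h)}$, self-distributivity) translate the invariance proofs for the three classical Reidemeister moves verbatim. One checks that these quandle axioms for $Q$ are consequences of the $G$-family axioms (i)--(iii): idempotency follows from (i), right-invertibility from (ii), and self-distributivity from (iii) together with the conjugation in the $G$-coordinate.

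For the moves R4 and R5 that push an arc over or under a vertex, I would assign the unique coloring determined by the pre-move arcs and verify both the crossing condition and the vertex condition for the new configuration. The $X$-coordinate is preserved along any arc emanating from the vertex by the vertex condition, so after applying $\ast_{h}$ or $\ast_{h^{-1}}$ along the strand passing the vertex, the three new arcs still share a common $X$-coordinate; the product of their signed $G$-coordinates remains $e_{G}$ because conjugation by the same element $h$ is an automorphism of $G$. The idempotency $x\ast_{g}x=x$ and the identity $x\ast_{e_{G}}y=x$ are exactly what is needed at the corner cases. For the IH-move R6, I would associate to an $X$-coloring of one side the unique candidate coloring on the other side obtained by transporting the data through the vertex condition; this assignment is forced on the boundary arcs, and verifying that it satisfies the vertex conditions at both new vertices uses axiom (ii) in the form $x\ast_{gh}y=(x\ast_{g}y)\ast_{h}y$. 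Symmetry of the construction gives an inverse, hence a bijection.

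The main obstacle will be the IH-move R6: unlike the crossing moves, here one must simultaneously match two distinct vertex conditions on opposite sides of the move, and one must also verify that the constructed map really lands in $\mathrm{Col}_{X}$ rather than only satisfying one of the two axioms. I expect this to be the technically delicate case and the one where the full $G$-family structure (as opposed to a mere quandle structure on $Q$) is essential, because the condition $(p_{G}\circ C(\alpha_{1}))^{\varepsilon(\omega,\alpha_{1})}\cdots=e_{G}$ can only be preserved thanks to the multiplicative compatibility of the operations $\ast_{g}$ with the group law in $G$. Once all six bijections are in hand, composing them across the finite move sequence yields a bijection between $\mathrm{Col}_{X}(D)$ and $\mathrm{Col}_{X}(D')$ for any two diagrams of the same handlebody-link, proving that $\#\mathrm{Col}_{X}(D)$ is an invariant.
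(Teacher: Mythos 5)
Your plan is essentially the proof from the literature: the paper itself gives no argument for this theorem, quoting it directly from \cite{ishii1}, and your reduction to coloring bijections for the R1--R6 moves of \cite{ishii3} (associated-quandle axioms for R1--R3, conjugation invariance of the signed $G$-product for an arc passing over a vertex, and $x\ast_{gh}y=(x\ast_{g}y)\ast_{h}y$ for an arc passing under a vertex) is exactly how that reference proceeds. One misplacement worth noting: the IH-move R6 contains no crossings in its local picture, so its coloring bijection needs only the group law in $G$ to reconcile the two vertex conditions on either side (not axiom (ii)), while the genuinely delicate case where the full $G$-family structure is essential is the R4-type move pushing an arc under a vertex --- the opposite of your expectation.
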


Let $D$ be an oriented diagram of a handlebody-knot $H$.
Let $H^{'}$ be a handlebody-knot obtained from the disk sum of $H$ and the standard solid torus.
Then, a diagram $D^{'}$ of $H^{'}$ is obtained from $D$ by attaching an edge and a circle component to an arc of $D$.
Therefore, an orientation of $D^{'}$ is induced from that of $D$ except for the edge and the circle component.

The following lemma will be used in Section~3 for constructing invariants of embedded surfaces.
\begin{lemm}\label{lem:2}
Let $G$ and $X$ be a finite group and a finite set, respectively, and $(X,\{\ast_{g}\}_{g\in G})$ be a $G$-family of quandles.
Let $F$ be a closed connected orientable surface in $S^3$.
We denote by $V_{F}$ and $W_{F}$ the closures of the connected components of $S^{3}\setminus F$.
Let $(V_{F}, F_{V})$ be a Heegaard splitting of $V_{F}$.
Let $(V_{F}, F_{V}^{'})$ be a Heegaard splitting obtained from $(V_{F}, F_{V})$ by applying a stabilization.
Let $H_{F_{V}}$ and $H_{F_{V}^{'}}$ be handlebody-knots obtained from the Heegaard splittings $(V_{F}, F_{V})$ and $(V_{F}, F_{V}^{'})$, respectively.
Let $D_{F_{V}}$ be an oriented diagram of $H_{F_{V}}$ and $D_{F_{V}^{'}}$ be a diagram of $H_{F_{V}^{'}}$ with an orientation induced from that of  $D_{F_{V}}$.
Then, we have $\#{\rm Col}_{X}(D_{F_{V}^{'}})=\#{\rm Col}_{X}(D_{F_{V}})\cdot \# G$.
\end{lemm}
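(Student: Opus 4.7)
The plan is to interpret the stabilization combinatorially on a trivalent graph diagram, exhibit a natural restriction map $\Phi \colon \mathrm{Col}_X(D_{F_V'}) \to \mathrm{Col}_X(D_{F_V})$, and show that $\Phi$ is surjective with every fiber of size exactly $\#G$. Translating Definition~\ref{def:sbz} via Definition~\ref{hls}, adjoining the parallel $1$-handle $N(\alpha)$ to $C_1$ amounts to performing a disk sum of $H_{F_V}$ with a standard solid torus inside a small $3$-ball that meets $H_{F_V}$ only along the attaching disk. On a trivalent graph representative, this adds a new trivalent vertex $v$ on some arc $\alpha$ of $D_{F_V}$, a bridge edge $\beta$ from $v$ to a new trivalent vertex $w$, and a loop-arc $\gamma$ from $w$ to itself. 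Since the disk sum is local, I would take $D_{F_V'}$ to agree with $D_{F_V}$ outside a small disk and depict $\beta\cup\gamma$ without crossings (neither among themselves nor with the rest of $D_{F_V}$). Thus $\mathscr{A}(D_{F_V'}) = (\mathscr{A}(D_{F_V})\setminus\{\alpha\}) \cup \{\alpha_1,\alpha_2,\beta,\gamma\}$, where $\alpha_1,\alpha_2$ are the two sub-arcs of $\alpha$ cut at $v$, and the crossings of $D_{F_V'}$ coincide with those of $D_{F_V}$.

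I would then analyze the $X$-coloring axioms at the two new vertices. At $w$, the two half-edges of the loop $\gamma$ carry opposite signs, so their $G$-contributions are mutually inverse and the $G$-vertex condition collapses to $(p_G\circ C(\beta))^{\varepsilon_\beta}=e_G$, forcing $p_G\circ C(\beta)=e_G$; the $X$-condition only yields $p_X\circ C(\gamma)=p_X\circ C(\beta)$, leaving $p_G\circ C(\gamma)$ completely unconstrained. At $v$, the induced orientations of $\alpha_1$ and $\alpha_2$ have opposite signs, so the $G$-condition combined with $p_G\circ C(\beta)=e_G$ yields $p_G\circ C(\alpha_1)=p_G\circ C(\alpha_2)$, while the $X$-condition gives $p_X\circ C(\alpha_1)=p_X\circ C(\alpha_2)=p_X\circ C(\beta)$. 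These equalities let me define $\Phi$ by merging the common color of $\alpha_1,\alpha_2$ back onto $\alpha$; all other vertex and crossing conditions of $D_{F_V'}$ match those of $D_{F_V}$ verbatim, so $\Phi(C')\in\mathrm{Col}_X(D_{F_V})$. Conversely, given $C\in\mathrm{Col}_X(D_{F_V})$, every preimage $C'$ is forced on $\alpha_1,\alpha_2$ (agreeing with $C(\alpha)$), on $\beta$ (with $p_X=p_X\circ C(\alpha)$ and $p_G=e_G$), and on the $X$-part of $\gamma$; the only free parameter is $p_G\circ C'(\gamma)\in G$. Hence $|\Phi^{-1}(C)|=\#G$, and the identity follows.

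The main obstacle I anticipate is not the counting itself but the diagrammatic set-up: confirming that a single Heegaard stabilization genuinely produces this local bridge-plus-loop picture on a trivalent graph diagram with no extraneous crossings, and carefully tracking the orientation signs at $v$ and $w$ since the orientations of $\beta$ and $\gamma$ are not induced from $D_{F_V}$ (the conclusion must be independent of those choices, which the sign analysis above confirms). Once the local picture is rigorously established, the remaining counting is a direct verification of the $X$-coloring axioms.
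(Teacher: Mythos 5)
Your proposal is correct and takes essentially the same route as the paper's proof: both realize the stabilization diagrammatically as attaching a crossing-free bridge edge and circle component to an arc of $D_{F_{V}}$, observe that the vertex conditions force the bridge's $G$-color to be $e_{G}$ (so restriction to the old arcs yields an $X$-coloring) while the circle's $G$-color is a free parameter, giving exactly $\# G$ extensions of each coloring. Your write-up is in fact somewhat more careful than the paper's, which glosses over the splitting of the arc $\alpha_{0}$ at the new trivalent vertex and the sign bookkeeping that you make explicit.
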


\begin{proof}
By the definition of a stabilization and by using an isotopy $S^3$, we may assume that the diagram $D_{F_{V}^{'}}$ is obtained from $D_{F_{V}}$ by attaching an edge $e_{0}$ and an $S^{1}$ component $\beta$ to an arc $\alpha_{0}$ of $D_{F_{V}}$ so that  $e_0$ and $\beta$ do not admit crossings.
Then, we give arbitrary orientations to $e_{0}$ and $\beta$.
For the other arcs, orientations are induced from that of $D_{F_{V}}$ as shown in Figure \ref{stabilization}.
Let $C$ be an $X$-coloring of $D_{F_{V}}$.
Suppose that $C(\alpha_{0})=(x, g)\in X\times G$.
Then, $C$ is extended to an $X$-coloring of $D_{F^{'}_{V}}$ by defining $C(e_{0})=(x, e_{G})$ and $C(\beta)=(x, h)$.
Hence, we obtain $\#\text{Col}_{X}(D_{F_{1}^{'}})\geq \# \text{Col}_{X}(D_{F})\cdot \# G$.
Conversely, for any $C(\beta)=(x, h)$ we have $C(e_{0})=(x, e_{G})$ by the axiom of an $X$-coloring.
Then we have $\# \text{Col}_{X}(D_{F_{V}^{'}})=\# \text{Col}_{X}(D_{F_{V}})\cdot \# G$.
The same holds for those of $W_{F}$.
\end{proof}

\begin{figure}[H]
\vspace{-5mm}
\begin{center}
\includegraphics[width=12cm, height=8cm]{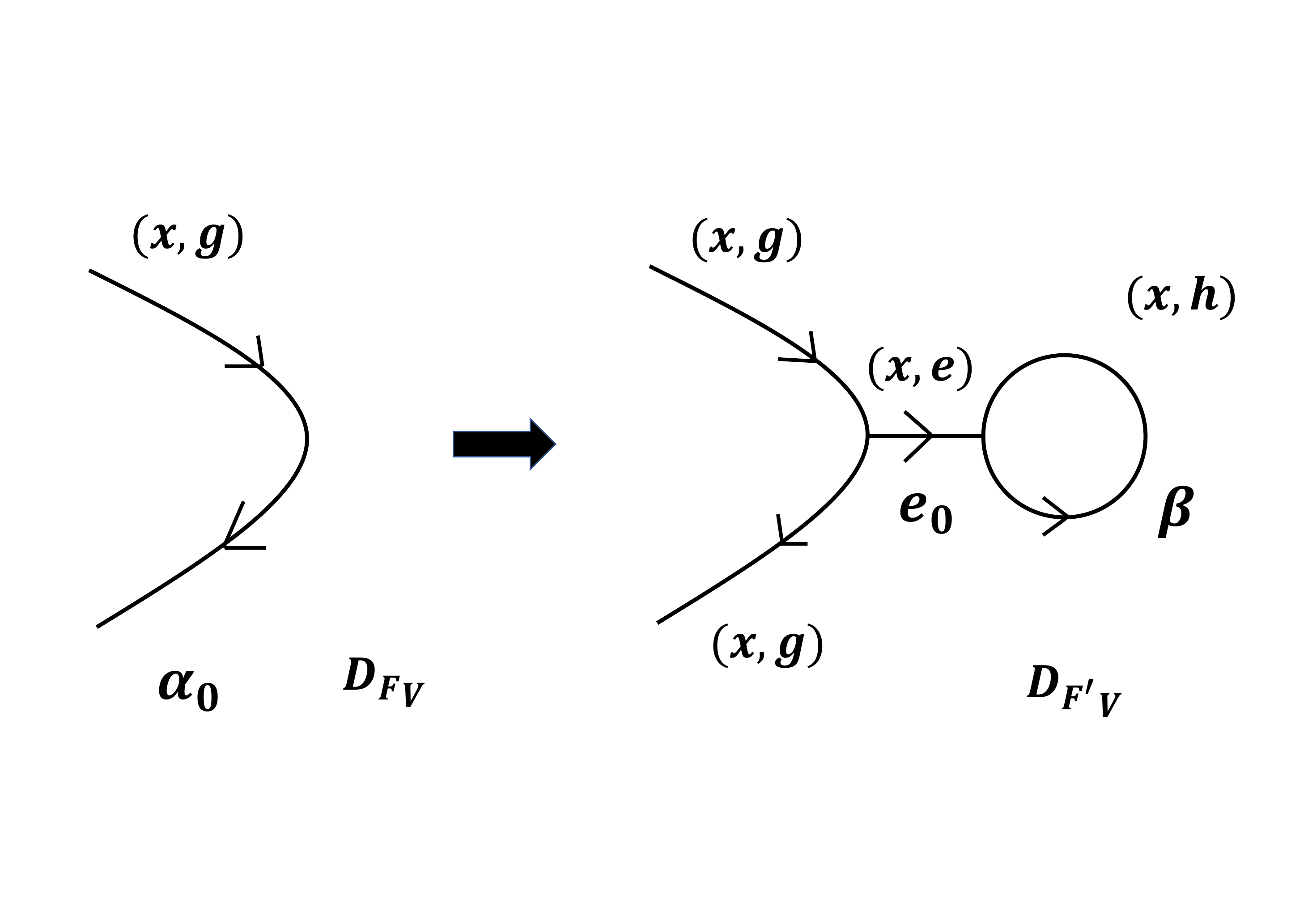}
\vspace{-10mm}
\caption{Attaching an edge $e_{0}$ and a circle component $\beta$ to $D_{F_{V}}$}
\label{stabilization}
\end{center}
\end{figure}

Let us introduce the notions of an $X$-set $Y$ and an $X_{Y}$-coloring of a diagram of a handlebody-link (refer to \cite{ishii1}).
\begin{defi}
Let $(X, \{\ast_{g}\}_{g\in G})$ be a $G$-family of quandles, and let $Y$ be a non-empty set with a family of maps $\bar{\ast}_{g}:Y\times X\to Y$ parametrized by $g\in G$.
The pair $(Y, \{\bar{\ast}_{g}\}_{g\in G})$ is called an {\it $X$-set} if for any $y\in Y$, $x, x^{'}\in X$, and any $g, h\in G$, the following conditions are satisfied.
\begin{itemize}
\item[(i)]$y \mathbin{\bar{\ast}_{gh}} x = (y \mathbin{\bar{\ast}_{g}} x ) \mathbin{\bar{\ast}_{h}} x$ and $y \mathbin{\bar{\ast}_{e_{G}}} x = y$.
\item[(ii)]$(y\mathbin{\bar{\ast}_{g}x}) \mathbin{\bar{\ast}_{h}}x^{'} = (y \mathbin{\bar{\ast}_{h}} x^{'})\mathbin{\bar{\ast}_{h^{-1}gh}}(x\mathbin{\ast_{h}}x^{'})$.
\end{itemize}
\end{defi}

Let $D$ be an oriented diagram of a handlebody-link. 
Let $D^{'}$ be a diagram obtained from $D$ by connecting undercrossing arcs at each crossing of $D$.
Then $D^{'}$ admits no crossing.
We call a connected component of $\mathbb{R}^{2}\setminus D^{'}$ a {\it complementary region} of $D$.
We denote by $\mathscr{R}(D)$ the set of complementary regions of $D$. 
We set $y\ast(x, g)=y\mathbin{\bar{\ast}_{g}}x$ for $y\in Y$ and $(x, g)\in X\times G=Q$.
Then, we introduce the notion of {\it $X_{Y}$-colorings}.

\begin{defi}[$X_{Y}$-coloring]
Let $G$ be a group, and $(X, \{\ast_{g}\}_{g\in G})$ and $(Y, \{\bar{\ast}_{g}\}_{g\in G})$ be a $G$-family of quandles and an $X$-set, respectively.
Let $D$ be an oriented diagram of a handlebody-link.
An {\it $X_{Y}$-coloring} of $D$ is a map $C:\mathscr{A}(D)\cup\mathscr{R}(D)\to Q\cup Y$ satisfying the following conditions.
We denote by $\text{Col}_{X}(D)_{Y}$ the set of $X_{Y}$-colorings of the oriented diagram $D$.
\begin{itemize}
\item[C1.]$C(\mathscr{A}(D))\subset Q$ and $C(\mathscr{R}(D))\subset Y$.
\item[C2.]The restriction of $C$ on $\mathscr{A}(D)$ is an $X$-coloring of $D$.
\item[C3.]For an over-arc $\alpha$ and adjacent complementary regions $\alpha_{1}$ and $\alpha_{2}$, $C$ satisfies $C(\alpha_{2})=C(\alpha_{1}) \ast C(\alpha)$, where the normal orientation of $\alpha$ points from $\alpha_1$ to $\alpha_2$ (see Figure \ref{xy-col}).
\end{itemize}
\end{defi}

\begin{figure}[H]
\begin{center}
\includegraphics[width=12cm, height=8cm]{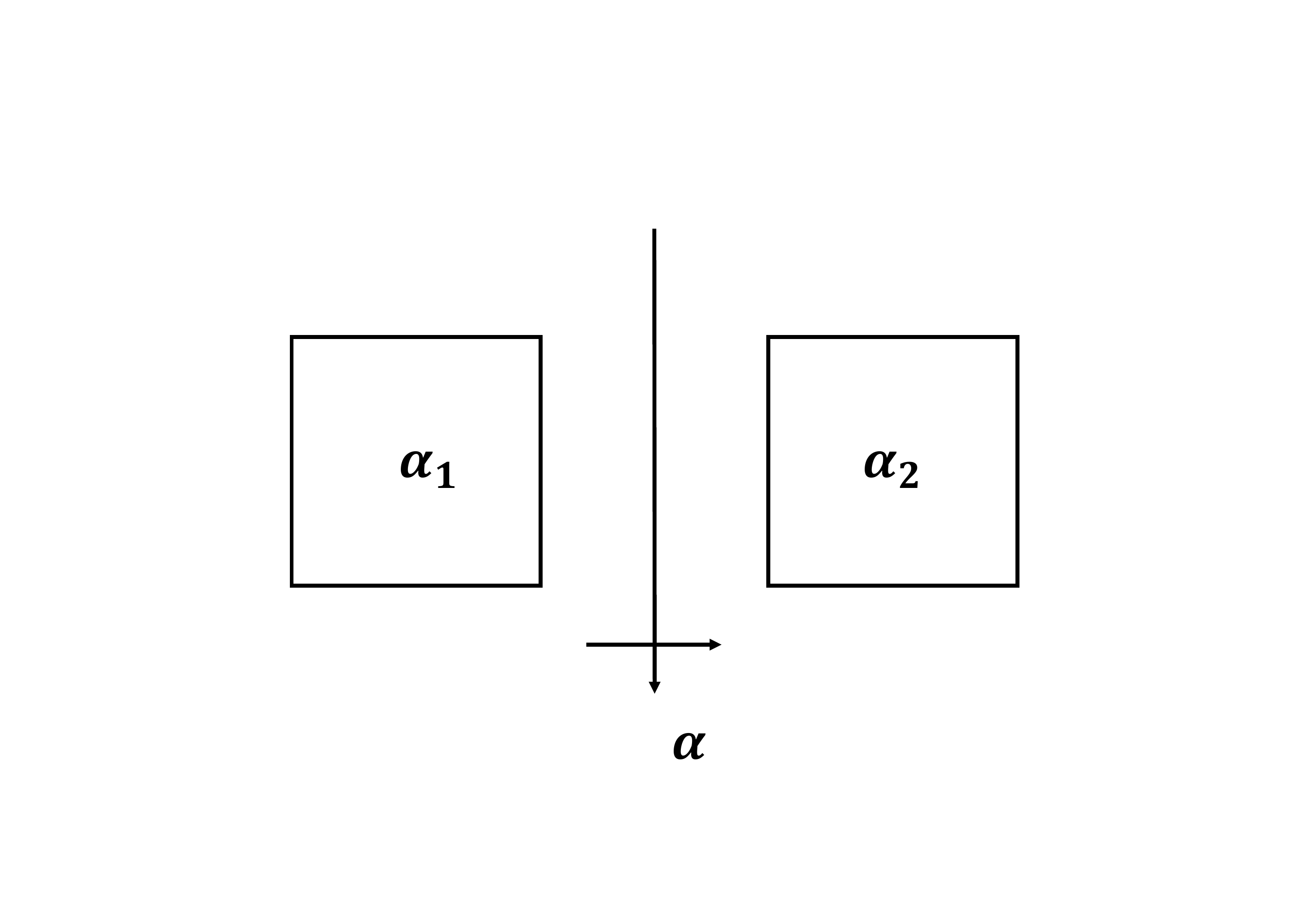}
\end{center}
\vspace{-13mm}
\caption{The coloring condition C3} 
\label{xy-col}
\end{figure}

\begin{figure}[H]
\begin{center}
\includegraphics[width=12cm, height=8cm]{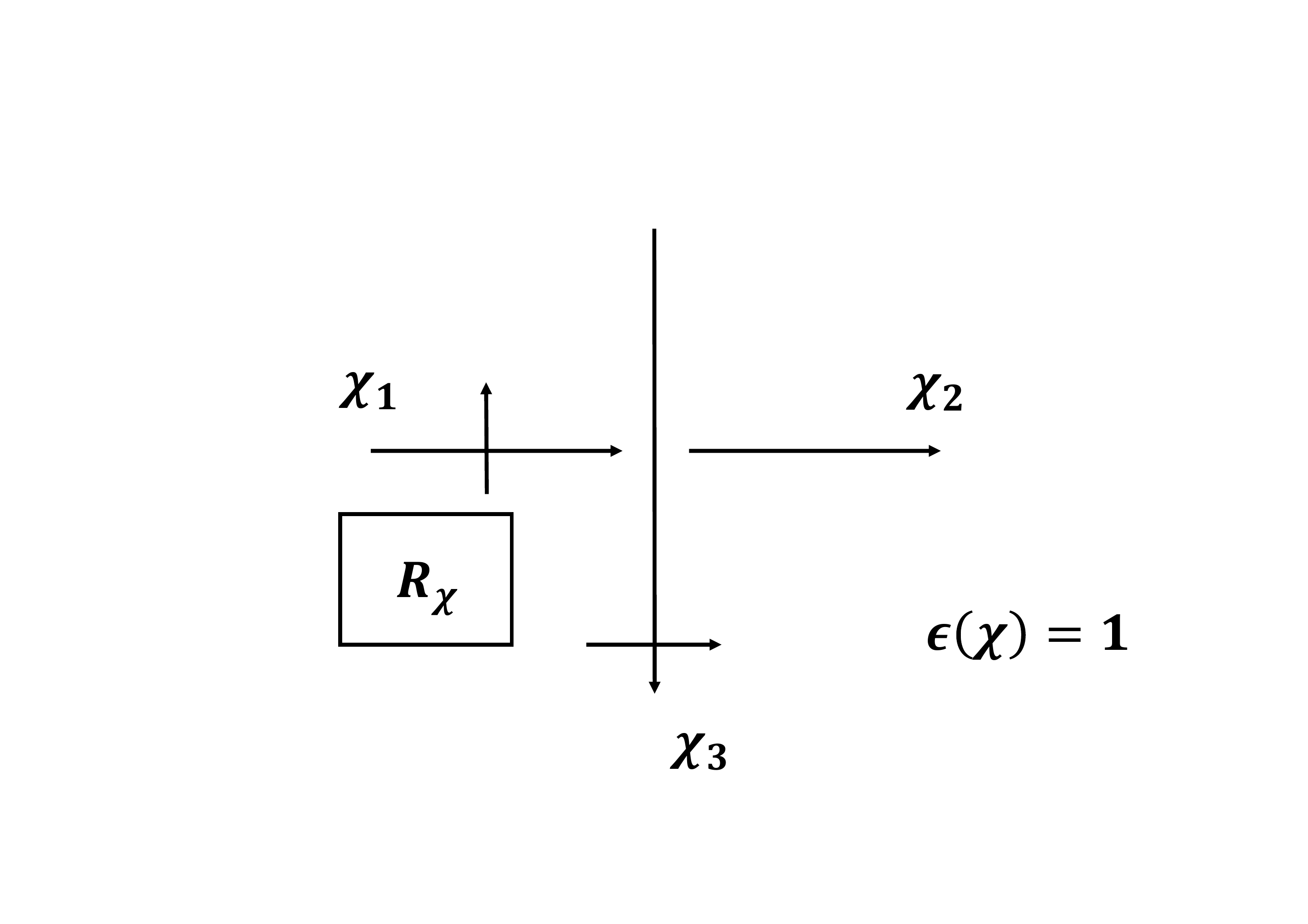}
\end{center}
\vspace{-13mm}
\caption{Weight of a crossing $\chi$}
\label{weight}
\end{figure}

Using a $G$-family of quandles and an Abelian group $A$, we define a chain complex, denoted by $C_{\ast}(X)_{Y}$, and the cochain complex, denoted by $C^{\ast}(X;A)_{Y}:=\text{Hom}(C_{\ast}(X)_{Y},A)$. 
Then, we also define the associated homology groups and cohomology groups (refer to \cite{ishii1}). 
For an $X_{Y}$-coloring $C$ and a crossing $\chi$ of a diagram $D$ of an oriented spatial trivalent graph of a handlebody-link, we define the weight of the crossing $\chi$ by $w(\chi; C):=\epsilon(\chi)(C(R_{\chi}), C(\chi_{1}), C(\chi_{3}))$, where $\chi_{1}$ is an under-arc such that the orientation of $\chi_1$ points into the crossing $\chi$, $\chi_{3}$ is an over-arc whose normal orientation points from $\chi_1$ to the other arc $\chi_2$,  $R_{\chi}$ is a complementary region such that normal orientations of $\chi_1$ and $\chi_3$ point to the opposite regions with respect to $\chi_1$ and $\chi_3$ (Figure \ref{weight}), and $\epsilon(\chi)$ is the sign of the crossing $\chi$.

Concerning homology
theory of $G$-families of quandles, the following lemma 
is known (see \cite{ishii1}).

\begin{lemm}
Let $(X, \{\ast_{g}\}_{g\in G})$ and $(Y, \{\bar{\ast}_{g}\}_{g\in G})$ be a $G$-family of quandles and an $X$-set, respectively.
Let $D$ be a diagram of an oriented spatial trivalent graph of a handlebody-link. 
Let $C$ be an $X_{Y}$-coloring of $D$.
Then, the sum of the weights $W(D;C):=\sum_{\chi\in D}w(\chi; C)$ is a $2$-cycle of $C_{\ast}(X)_{Y}$.
\end{lemm}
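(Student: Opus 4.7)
The plan is to verify directly that $\partial W(D;C)=0$ in $C_{1}(X)_{Y}$, following the blueprint of the proof for classical quandle/rack homology of colored link diagrams but accounting for the trivalent vertices.

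First, I would write out the boundary map $\partial\colon C_{2}(X)_{Y}\to C_{1}(X)_{Y}$ from the chain complex in \cite{ishii1}. Applied to a single weight $w(\chi;C)=\epsilon(\chi)\bigl(C(R_{\chi}),C(\chi_{1}),C(\chi_{3})\bigr)$, the boundary produces four signed 1-chains; each of them can be associated with one of the three arcs $\chi_{1},\chi_{2},\chi_{3}$ meeting at $\chi$ together with an adjacent complementary region. Axiom C2 identifies $C(\chi_{1})\ast C(\chi_{3})=C(\chi_{2})$, and axiom C3 identifies the translates $C(R_{\chi})\ast C(\chi_{i})$ with the colors of the appropriate bordering regions, so every entry appearing in $\partial w(\chi;C)$ is itself a color assigned by $C$ to some arc or region of $D$.

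Summing $\partial W(D;C)=\sum_{\chi}\partial w(\chi;C)$, the 1-chain contributions reorganize around each arc of $D$. For an arc $\alpha$ whose two endpoints are both crossings, the contributions coming from those two crossings are equal with opposite signs (using the orientation conventions together with C2 and C3), and therefore cancel. This reduces the calculation to the contributions supported at trivalent vertices.

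At a trivalent vertex $\omega$ with incident arcs $\alpha_{1},\alpha_{2},\alpha_{3}$ colored by $(x,g_{i})$, the vertex condition of the $X$-coloring gives $g_{1}^{\varepsilon(\omega,\alpha_{1})}\cdot g_{2}^{\varepsilon(\omega,\alpha_{2})}\cdot g_{3}^{\varepsilon(\omega,\alpha_{3})}=e_{G}$. Combined with the $G$-family axiom $y\mathbin{\bar{\ast}_{gh}}x=(y\mathbin{\bar{\ast}_{g}}x)\mathbin{\bar{\ast}_{h}}x$ for the $X$-set $Y$, this identity forces the sum of the three leftover 1-chain contributions around $\omega$ to collapse to zero. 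Combining the arc-level and vertex-level cancellations then yields $\partial W(D;C)=0$. The main obstacle I expect is the bookkeeping at trivalent vertices: one has to keep careful track of the signs $\varepsilon(\omega,\alpha_{i})$, the orientations of the arcs incident to $\omega$, and the identifications of the complementary regions around the vertex, and then check that the $G$-product relation translates exactly into the vanishing of the three residual contributions. The crossing-level cancellation follows the same pattern as in standard rack/quandle homology of links and should proceed routinely once the boundary formula is laid out explicitly.
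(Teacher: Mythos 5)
First, note that the paper itself contains no proof of this lemma: it is stated as a known result and attributed to \cite{ishii1}, so there is no in-paper argument to compare against; the relevant comparison is with the proof in that reference, which your sketch largely reproduces. Your crossing-level plan is sound: writing $\partial w(\chi;C)$ as four signed $1$-chains, each a pair (region color, arc color) attached to a corner of the crossing, and cancelling these in pairs along the diagram using C2 and C3 is exactly the standard shadow rack/quandle homology computation, and it carries over verbatim to this setting.

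There is, however, one genuine gap in the part you flag as the main obstacle. You assert that at a trivalent vertex the vertex condition $g_{1}^{\varepsilon(\omega,\alpha_{1})}g_{2}^{\varepsilon(\omega,\alpha_{2})}g_{3}^{\varepsilon(\omega,\alpha_{3})}=e_{G}$, \emph{combined with the $X$-set axiom} $y\mathbin{\bar{\ast}_{gh}}x=(y\mathbin{\bar{\ast}_{g}}x)\mathbin{\bar{\ast}_{h}}x$, forces the three residual $1$-chain contributions to collapse to zero. That mechanism fails: in the free abelian group on $Y\times(X\times G)$ the residual vertex contribution has the shape $(y,(x,g_{1}g_{2}))-(y,(x,g_{1}))-(y\mathbin{\bar{\ast}_{g_{1}}}x,(x,g_{2}))$, and the distinct generators $(y,(x,g_{1}g_{2}))$, $(y,(x,g_{1}))$, $(y\mathbin{\bar{\ast}_{g_{1}}}x,(x,g_{2}))$ are linearly independent, so no axiom about the action on $Y$ makes this expression vanish. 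In \cite{ishii1} the chain group $C_{n}(X)_{Y}$ is \emph{defined} as the quotient of the free module on $Y\times(X\times G)^{n}$ by a subgroup $D_{n}$ generated by precisely such differences (multiplicativity of generators in each $G$-coordinate, which in particular gives $(y,(x,e_{G}))\equiv 0$); it is this quotient, together with the vertex condition, that annihilates the residual terms. The $G$-family and $X$-set axioms you invoke enter elsewhere: they are what make $D_{\ast}$ a subcomplex, i.e.\ $\partial D_{n}\subset D_{n-1}$, so that the quotient complex exists and your crossing-level cancellation takes place in it. If you carried out your plan with the free (unquotiented) complex, you would find $\partial W(D;C)\neq 0$ for any diagram with vertices; once you work in $C_{\ast}(X)_{Y}$ as actually defined, your corner-by-corner bookkeeping goes through and recovers the argument of \cite{ishii1}.
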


Let $A$ be an Abelian group.
Let $\theta$ be a 2-cocycle of the cochain complex $C^{\ast}(X;A)_{Y}$.
We define the multiset $\Phi_{\theta}(D)$ as follows:
\[
\Phi_{\theta}(D):=\{\theta(W(D;C))\in A\mid C\in\text{Col}_{X}(D)_{Y}\}.
\]

Concerning cohomology theory of $G$-families of quandles, the following theorem is also known (see \cite{ishii1}).

\begin{theo}\label{coho}
Let $(X, \{\ast_{g}\}_{g\in G})$ and $(Y, \{\bar{\ast}_{g}\}_{g\in G})$ be a $G$-family of quandles and an $X$-set, respectively.
Let $H$ be a handlebody-link and $D$ be a diagram of an oriented spatial trivalent graph of $H$.
Let $A$ and $\theta$ be an Abelian group and a $2$-cocycle of the cochain complex $C^{\ast}(X;A)_{Y}$, respectively.
Then, 
$\Phi_{\theta}(D)$ does not depend on the choice of $D$ and is an invariant of the handlebody-link $H$.
\end{theo}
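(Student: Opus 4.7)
The plan is to prove invariance of $\Phi_\theta(D)$ by checking that it is preserved under the complete set of local moves that generates the equivalence relation on diagrams of oriented spatial trivalent graphs representing a given handlebody-link. By the Ishii-type generalization of the Reidemeister theorem (cited in \cite{ishii1}), two such diagrams represent equivalent handlebody-links if and only if they are related by a finite sequence of (i) the classical Reidemeister moves R1--R3, (ii) the IH-move exchanging two vertex configurations, and (iii) moves that slide an arc under or over a vertex. It therefore suffices to verify invariance under each such local move.

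For each move relating two diagrams $D$ and $D'$, I would first construct an explicit bijection $\phi : \mathrm{Col}_X(D)_Y \to \mathrm{Col}_X(D')_Y$. The move modifies the diagram only inside a small disk $\Delta$, so any $X_Y$-coloring outside $\Delta$ is preserved. Inside $\Delta$, the colors on the arcs and regions cut off by $\partial \Delta$ serve as boundary data, and one checks using the quandle axioms, the $X$-set axioms, and the vertex condition on $G$-products that these boundary data extend in exactly one way to an $X_Y$-coloring of the inside of $\Delta$ in both $D$ and $D'$. This matching defines $\phi$ and shows $\#\mathrm{Col}_X(D)_Y = \#\mathrm{Col}_X(D')_Y$.

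Next, for each pair $(C,\phi(C))$, I would compare the weight sums $W(D;C)$ and $W(D';\phi(C))$ as $2$-chains in $C_{\ast}(X)_Y$. For R1 the new crossing contributes a degenerate element killed by the chain complex's degeneracy relation built from $x \ast_g x = x$; for R2 the two new crossings have opposite signs and cancel outright; for R3 the difference is precisely the image of a $3$-chain under $\partial_3$, reflecting the self-distributivity axiom. For the vertex moves, the analogous boundary identity is supplied by the axioms $x \ast_{gh} y = (x \ast_g y)\ast_h y$ and $x \ast_{e_G} y = x$ combined with the compatibility of $\bar\ast_g$ with $\ast_g$ on the region colors. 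In every case the difference $W(D;C) - W(D';\phi(C))$ lies in $\mathrm{Im}(\partial_3)$ plus degenerate chains; since $\theta$ is a $2$-cocycle of $C^{\ast}(X;A)_Y$, it annihilates both, hence $\theta(W(D;C)) = \theta(W(D';\phi(C)))$ in $A$.

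Assembling these bijections yields an equality of multisets $\Phi_\theta(D) = \Phi_\theta(D')$ for each move, and hence $\Phi_\theta$ is independent of the chosen diagram of $H$. The main obstacle I anticipate is the careful bookkeeping at vertices: there one must simultaneously control the $X$-component (constrained to be constant on the three incident arcs), the $G$-component (constrained by the signed product being $e_G$), and the region colors meeting at the vertex. Verifying that the weight difference for a vertex move lies in the image of $\partial_3$ --- using precisely the full list of axioms for a $G$-family of quandles and for an $X$-set --- is the technically delicate point; once this is done, invariance follows automatically from the cocycle condition on $\theta$.
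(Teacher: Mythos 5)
The paper itself offers no proof of this statement: it is quoted verbatim as a known result of Ishii--Iwakiri--Jang--Oshiro (see \cite{ishii1}), so the only fair comparison is with the original proof there. Your outline reconstructs that original argument essentially correctly: one uses the fact (from \cite{ishii3}) that two diagrams of oriented spatial trivalent graphs represent equivalent handlebody-links if and only if they are related by the Reidemeister moves R1--R5 (including the moves sliding an arc past a vertex) together with the IH-move; for each move one builds a bijection of $X_{Y}$-coloring sets fixing everything outside the disk of the move, and one checks that the weight sums differ by degenerate chains (killed because $C^{\ast}(X;A)_{Y}$ is built on the quotient by the degeneracy subcomplex, reflecting $x\ast_{g}x=x$) plus boundaries of $3$-chains (killed by the cocycle condition), with the $G$-family axioms $x\ast_{gh}y=(x\ast_{g}y)\ast_{h}y$, $x\ast_{e_{G}}y=x$ and the $X$-set axioms supplying exactly the identities needed at the vertex moves and the IH-move. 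This is the same decomposition, the same key mechanism, and the same delicate point (the vertex bookkeeping) as in \cite{ishii1}.

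One item is missing from your list of moves: since $D$ is a diagram of an \emph{oriented} trivalent graph while $H$ is unoriented, invariance requires in addition independence of the auxiliary orientation, i.e.\ invariance under reversing the orientation of an edge. This is handled in \cite{ishii1} by the correspondence sending the color $(x,g)$ on the reversed arc to $(x,g^{-1})$; this is compatible with the vertex condition because reversing an arc flips the sign $\varepsilon(\omega,\alpha_{i})$ in the relation $\prod_{i}(p_{G}\circ C(\alpha_{i}))^{\varepsilon(\omega,\alpha_{i})}=e_{G}$, and one checks the weight sum again changes only by degenerate chains and boundaries. Without this step your argument only shows invariance of $\Phi_{\theta}$ for handlebody-links equipped with a chosen orientation of a spine, not of the handlebody-link itself; with it, your plan is complete and coincides with the proof in \cite{ishii1}.
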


We can write the multiset
$\Phi_{\theta}(D)$ in the form 
\[
\Phi_{\theta}(D)=\{(a_{1})_{l_{1}}, \ldots, (a_{m})_{l_{m}}\},
\]
where 
$l_{j}$ is the multiplicity of 
$a_{j}\in A$, and 
$(a_{j})_{l_{j}}$ represents 
$\underbrace{a_{j}, \ldots, a_{j}}_{l_j \text{-times}}$.
Using these notations and a natural number $N$, we define the set $\Phi_{\theta}(D)_{N}$ as follows:
\[
\Phi_{\theta}(D)_{N}:=\{(a_{1}, l_{1}/N), \ldots, (a_{m}, l_{m}/N)\mid (a_{i}, l_{i}/N)\in A\times \mathbb{Q}\}.
\]

\section{Main results}
In this section, we construct algebraic invariants of embedded surfaces in $S^3$.
We first use $X$-colorings combined with Lemma \ref{lem:2}.
Then, we will also use another quandle invariant given in Theorem \ref{coho}.

Let $F_{1}$ and $F_{2}$ be surfaces in $S^3$.
Two surfaces $F_1$ and $F_2$ are said to be {\it isotopic}, denoted by $F_{1}\cong F_{2}$, if there exists an isotopy $f_{t}:S^{3}\to S^{3}$, $t\in[0,1]$ such that $f_{0}=\text{id}_{S^3}$ and $f_{1}(F_{1})=F_{2}$.
We denote by $V_{i}$ and $W_{i}$ the closures of the connected components of $S^{3}\setminus F_i$, ($i=1, 2$).
Let $V_{i}=H_{V_{i}}\cup C_{V_{i}}$ and $W_{i}=H_{W_{i}}\cup C_{W_{i}}$ be Heegaard splittings of $V_{i}$ and $W_{i}$, respectively, consisting of handlebodies $H_{V_{i}}$, $H_{W_{i}}$ and compression bodies $C_{V_{i}}$, $C_{W_{i}}$.
We call the $2$-component handlebody link $L_{i}:=H_{V_{i}}\sqcup H_{W_{i}}$ an \emph{associated $2$-component handlebody-link} of $F_{i}$.

\begin{theo}\label{thm:6}
Let $G$ be a finite group and $(X, \{\ast_g\}_{g \in G})$ a $G$-family of quandles, where $X$ is a finite set.
Let $F$ be a closed connected orientable surface in $S^3$.
We denote by $V_{F}$ and $W_{F}$ the connected components of the exterior of $F$.
Let $F_{V}$ and $F_{W}$ be Heegaard surfaces of  $V_{F}$ and $W_{F}$, and $H_{V}$ and $H_{W}$ the corresponding handlebodies, respectively. We denote by $D_{V}$ and $D_{W}$ diagrams of oriented spatial trivalent graphs representing $H_{V}$ and $H_{W}$, respectively.
Then, the unordered pair 
\[
\left(\frac{\#{\rm Col}_{X}(D_{V})}{(\# G)^{g(F_{V})}} , \frac{\#{\rm Col}_{X}(D_{W})}{(\# G)^{g(F_{W})}}\right)
\]
of rational numbers is an isotopy invariant of $F$.
\end{theo}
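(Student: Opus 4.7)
The strategy is to verify, for each of the two complementary pieces $V_F$ and $W_F$ separately, that $\#\text{Col}_X(D_V)/(\#G)^{g(F_V)}$ is a well-defined rational number depending only on the piece as a submanifold of $S^3$, and then to use ambient isotopy to transfer the unordered pair of ratios between isotopic surfaces. The two essential ingredients are the Reidemeister--Singer theorem (Theorem \ref{thm:1}) and the multiplicative behaviour of $\#\text{Col}_X$ under Heegaard-splitting stabilization given by Lemma \ref{lem:2}.

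First I would fix one piece, say $V_F$. By Theorem \ref{thm:4}, $\#\text{Col}_X(D_V)$ depends only on the handlebody-knot class of $H_V$ in $S^3$ and not on the particular diagram chosen to represent it. It therefore remains to show that the ratio is independent of the choice of Heegaard splitting $(V_F, F_V)$. Applying Theorem \ref{thm:1} to $V_F$, whose boundary $F$ forces the partition $\partial V_F = F \sqcup \emptyset$, any two Heegaard splittings of $V_F$ become equivalent after finitely many stabilizations, which in particular yields ambient isotopic handlebody-knots in $S^3$. Under a single stabilization the Heegaard-surface genus increases by $1$, while by Lemma \ref{lem:2} the number of $X$-colorings is multiplied by exactly $\#G$; hence the ratio is unchanged by stabilization, and so depends only on $V_F$. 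The same argument applies verbatim to $W_F$, producing two well-defined rational numbers.

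Since $F$ canonically determines the two closures $V_F, W_F$ of the components of $S^3 \setminus F$ only as an unordered pair, I would then form the unordered pair of ratios. For isotopy invariance, suppose $f_t$ is an ambient isotopy with $f_1(F) = F'$. Then $f_1$ sends $\{V_F, W_F\}$ to $\{V_{F'}, W_{F'}\}$ as unordered pairs and transports any Heegaard splitting of a piece to one of its image; the associated handlebody-knot in $S^3$ is carried to an equivalent handlebody-knot, so $\#\text{Col}_X$ is unchanged, and the Heegaard-surface genus is preserved. Therefore the unordered pair of ratios is preserved under isotopy of $F$.

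I do not expect a major technical obstacle. The definition of the ratio is engineered so that Reidemeister--Singer and Lemma \ref{lem:2} mesh perfectly: the denominator $(\#G)^{g(F_V)}$ precisely cancels the multiplicative factor that a single stabilization introduces in the numerator. The only real subtlety is the bookkeeping one, namely that there is no canonical ordering of the two pieces of $S^3\setminus F$, which is exactly why the invariant must be formulated as an \emph{unordered} pair rather than as an ordered one.
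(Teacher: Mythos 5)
Your proposal is correct and follows essentially the same route as the paper: applying the Reidemeister--Singer theorem to each complementary piece, using Lemma \ref{lem:2} to see that each stabilization multiplies $\#{\rm Col}_X$ by exactly $\# G$ while raising the Heegaard genus by one (so the ratio is unchanged), and invoking Theorem \ref{thm:4} for diagram-independence. Your explicit remarks on the unordered-pair bookkeeping and the transport of Heegaard splittings under ambient isotopy are points the paper leaves implicit, but they do not change the argument.
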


\begin{proof}
Let $(V_{F},F_{V})$ and $(V_{F},F_{V}^{'})$ be Heegaard splittings of $V_{F}$.
Then, we have the corresponding pairs of a handlebody-knot and a compression body in $S^3$, say $(H_{V},C_{V})$ and $(H_{V}^{'},C_{V}^{'})$, respectively.
By the Reidemeister--Singer theorem, the two Heegaard splittings $(V_{F},F_{V})$ and $(V_{F},F_{V}^{'})$ become equivalent to the same Heegaard splitting $(V_F, \bar{F}_V)$ after $m$ times and $n$ times stabilizations, respectively, for
some non-negative integers $m$ and $n$. 
Let $\bar{H}$ be the handlebody-knot and $\bar{C}_V$ be the compression body corresponding to $(V_F, \bar{F}_V)$.
Let $\bar{D}_{V}$ be a diagram of an oriented spatial trivalent graph representing $\bar{H}_{V}$.
Then, by Lemma \ref{lem:2}, we observed a variation of the cardinality of the set of $X$-colorings after applying a stabilization.
Then, we have $\# \text{Col}_{X}(\bar{D}_{V})=\# \text{Col}_{X}(D_{V}) \cdot (\# G)^{m}=\# \text{Col}_{X}(D_{V}^{'}) \cdot (\# G)^{n}$.
On the other hand, we have $g(\bar{F}_{V})=g(F_{V})+m=g(F_{V}^{'})+n$.
Hence we see 
$$
\left(\frac{\# {\rm Col}_X(\bar{D}_V)}{(\# G)^{g(\bar{F}_V)}}\right) = \left(\frac{\# {\rm Col}_X(D_V)}{(\# G)^{g(F_V)}}\right) = \left(\frac{\# {\rm Col}_X(D^{'}_V)}{(\# G)^{g(F^{'}_V)}}\right).
$$
Combining Theorem \ref{thm:4}, we have the desired conclusion.
\end{proof}

In Theorem \ref{thm:6}, we used each connected component of a $2$-component handlebody-link independently.
Then we constructed an invariant of handlebody-knots up to stabilizations.
However, we can show the following even if we use an invariant of $2$-component handlebody-links up to stabilizations.

\begin{theo}\label{link}
Let $G$ be a finite group and $(X, \{\ast_g\}_{g \in G})$ a $G$-family of quandles, where $X$ is a finite set.
Let $F$ be a closed connected orientable surface in $S^3$.
We denote by $V_{F}$ and $W_{F}$ the connected components of the exterior of $F$.
Let $F_{V}$ and $F_{W}$ be Heegaard surfaces of  $V_{F}$ and $W_{F}$, and $H_{V}$ and $H_{W}$ the corresponding handlebodies, respectively. 
Let $L=H_{V}\sqcup H_{W}$ be the $2$-component handlebody-link.
We denote by $D$ a diagram of an oriented spatial trivalent graph representing $L$.
Then, the rational number 
\[
\frac{\#{\rm Col}_{X}(D)}{(\# G)^{g(F_{V})+g(F_{W})}}
\]
is an isotopy invariant of $F$.
\end{theo}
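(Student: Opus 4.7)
The plan is to mimic the proof of Theorem~\ref{thm:6}, but to work with $H_V$ and $H_W$ together as a single $2$-component handlebody-link $L = H_V \sqcup H_W$ rather than treating them independently. The three ingredients remain the Reidemeister--Singer theorem (Theorem~\ref{thm:1}), the stabilization formula from Lemma~\ref{lem:2}, and the invariance of $\#{\rm Col}_X$ under equivalence of handlebody-links (Theorem~\ref{thm:4}).

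First I would verify that the ratio does not depend on the choice of Heegaard splittings of $V_F$ and $W_F$. Given two Heegaard splittings $(V_F, F_V)$ and $(V_F, F_V^{'})$ of $V_F$, Theorem~\ref{thm:1} yields a common stabilization $(V_F, \bar{F}_V)$ obtained from them by $m$ and $n$ stabilizations respectively. Each such stabilization modifies only the $H_V$-component of $L$, attaching an edge $e_0$ and a circle $\beta$ to an arc of the corresponding diagram without creating new crossings. The local coloring argument of Lemma~\ref{lem:2}---that $e_0$ must be colored by $(x, e_G)$ while $\beta$ may be colored by $(x, h)$ for an arbitrary $h \in G$---applies verbatim in the $2$-component setting, since the added edge and circle can be arranged to avoid $H_W$. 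Hence each $V_F$-stabilization multiplies $\#{\rm Col}_X(D)$ by $\# G$ and increases $g(F_V)$ by $1$, while $g(F_W)$ and the colorings on $H_W$ remain unchanged. The same reasoning applies to $W_F$-stabilizations.

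Writing $\bar{D}$ for a diagram of the $2$-component handlebody-link obtained from these common stabilizations on both sides, I therefore obtain
\[
\frac{\#{\rm Col}_X(\bar{D})}{(\# G)^{g(\bar{F}_V) + g(\bar{F}_W)}} = \frac{\#{\rm Col}_X(D)}{(\# G)^{g(F_V)+g(F_W)}} = \frac{\#{\rm Col}_X(D^{'})}{(\# G)^{g(F_V^{'})+g(F_W^{'})}},
\]
so the ratio does not depend on the choice of Heegaard splittings for either side. Independence of the particular diagram $D$ representing $L$ is immediate from Theorem~\ref{thm:4}. Finally, if $F_1 \cong F_2$ via an ambient isotopy $f_t$ of $S^3$, then $f_1$ carries Heegaard splittings of $V_{F_1}$ and $W_{F_1}$ to those of $V_{F_2}$ and $W_{F_2}$, possibly after interchanging the two sides; since the exponent $g(F_V) + g(F_W)$ is symmetric in $V$ and $W$ and $\#{\rm Col}_X$ is preserved under equivalence of handlebody-links, the ratio is preserved.

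The main point requiring care, and the only step not immediately reduced to the single-component case, is checking that Lemma~\ref{lem:2} transfers cleanly to $2$-component handlebody-links. I expect this to be essentially routine: the stabilization is localized to one component of $L$, the added edge and circle can be placed disjointly from the other component's diagram, and the bijection between $X$-colorings before and after stabilization factors through the unchanged colorings of $H_W$, contributing the same multiplicative factor $\#{\rm Col}_X$ on that component before and after. Once this local verification is in place, the combinatorial bookkeeping above completes the proof.
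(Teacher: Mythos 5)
Your proof is correct and takes essentially the same approach the paper intends: the paper gives no written proof of Theorem~\ref{link}, presenting it as the argument of Theorem~\ref{thm:6} carried out on the link $L$ as a whole, and your write-up is precisely that adaptation --- Reidemeister--Singer for each of $V_F$ and $W_F$, the Lemma~\ref{lem:2} count localized to one component (legitimate because, by Definition~\ref{hls}, the stabilizing solid torus is separated from the other component, the same point the paper invokes in the proof of the subsequent cocycle-invariant theorem), and Theorem~\ref{thm:4} for diagram independence. Your additional care about the possible interchange of $V_F$ and $W_F$ under an ambient isotopy, absorbed by the symmetry of the exponent $g(F_V)+g(F_W)$, is a detail the paper leaves implicit but is handled correctly.
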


By using an argument similar to that used in Theorem \ref{thm:6}, we can also construct another algebraic invariant of embedded surfaces in $S^3$.
In order to construct such an invariant, we use Theorem \ref{thm:4} and \ref{coho}.

\begin{theo}
Let $G$ be a finite group and $(X, \{\ast_g\}_{g \in G})$ a $G$-family of quandles, where $X$ is a finite set.
Let $Y$ be an $X$-set, where $Y$ is a finite set.
Let $F$ be a closed connected orientable surface in $S^3$.
We denote by $V_{F}$ and $W_{F}$ the connected components of the exterior of $F$.
Let $F_{V}$ and $F_{W}$ be Heegaard surfaces of  $V_{F}$ and $W_{F}$, and $H_{V}$ and $H_{W}$ the corresponding handlebodies, respectively. 
Let $L=H_{V}\sqcup H_{W}$ be a $2$-component handlebody-link.
We denote by $D$ a diagram of an oriented spatial trivalent graph representing $L$.
Then, the multiset
$\Phi_{\theta}(D)_{(\# G)^{g(F_{V})+g(F_{W})}}$ is an isotopy invariant of $F$.
\end{theo}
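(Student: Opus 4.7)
The plan is to model the proof on that of Theorem \ref{thm:6}, substituting the cocycle invariant from Theorem \ref{coho} for the $X$-coloring count used there. By Theorem \ref{coho}, for a fixed $2$-component handlebody-link $L = H_V \sqcup H_W$, the multiset $\Phi_\theta(D)$ depends only on $L$ and not on the chosen diagram $D$. Therefore it suffices to show that, after dividing every multiplicity by $(\# G)^{g(F_V)+g(F_W)}$, the resulting normalized multiset does not depend on the remaining choices, namely the Heegaard splittings of $V_F$ and $W_F$. By the Reidemeister--Singer theorem (Theorem \ref{thm:1}) this reduces to checking invariance under a single stabilization on either side of $F$.

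Concentrating on a stabilization of the Heegaard splitting of $V_F$ (the $W_F$ case being completely symmetric), one may assume exactly as in the proof of Lemma \ref{lem:2} that a diagram $D'$ of the new $2$-component handlebody-link differs from $D$ only by the attachment of an edge $e_0$ and a disjoint loop $\beta$ to an arc $\alpha_0$ of $D$, with neither $e_0$ nor $\beta$ participating in any crossing. For an $X_Y$-coloring $C$ of $D$ with $C(\alpha_0) = (x, g)$, the vertex relation forces $C(e_0) = (x, e_G)$, the value $C(\beta) = (x, h)$ may be chosen freely from $\{x\} \times G$, and condition C3 applied across $\beta$ uniquely determines the coloring of the newly created complementary region enclosed by $\beta$ from that of the surrounding region. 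This yields a $(\# G)$-to-$1$ extension map ${\rm Col}_X(D')_Y \to {\rm Col}_X(D)_Y$.

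The key observation is then that $D'$ has precisely the same set of crossings as $D$, so $W(D';C') = W(D;C)$ and hence $\theta(W(D';C')) = \theta(W(D;C))$ for every extension $C'$ of $C$. Consequently each $a_j$ occurring with multiplicity $l_j$ in $\Phi_\theta(D)$ occurs with multiplicity $l_j \cdot \# G$ in $\Phi_\theta(D')$. Since a $V_F$-side stabilization simultaneously replaces $g(F_V)$ by $g(F_V)+1$, the normalized pairs $(a_j,\, l_j/(\# G)^{g(F_V)+g(F_W)})$ are unchanged, giving
\[
\Phi_\theta(D')_{(\# G)^{g(F_V')+g(F_W)}} = \Phi_\theta(D)_{(\# G)^{g(F_V)+g(F_W)}}.
\]
For isotopy invariance, if $F \cong F'$ via an ambient isotopy $f_t$ of $S^3$, then $f_1$ sends any choice of Heegaard splittings of $V_F$ and $W_F$ to Heegaard splittings of $V_{F'}$ and $W_{F'}$ of the same genera, and carries the associated $2$-component handlebody-link of $F$ to that of $F'$; combined with Theorem \ref{coho} and the stabilization analysis above, this yields the claimed invariance.

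The step I expect to be most delicate is the region-coloring bookkeeping. One must interpret condition C3 carefully when $\beta$ is a crossingless simple loop and verify that the single new complementary region picks up exactly one admissible coloring given the surrounding data, so that the extension map is genuinely $(\# G)$-to-$1$ rather than, say, $(\# G \cdot \# Y)$-to-$1$. Only with this precise count does the chosen normalization by $(\# G)^{g(F_V)+g(F_W)}$, which does not involve $\# Y$, correctly compensate for the stabilization.
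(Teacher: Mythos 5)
Your proposal is correct and follows essentially the same route as the paper's own proof: reduce via Reidemeister--Singer to a single stabilization, observe that the attached edge $e_{0}$ and crossingless loop $\beta$ leave the set of crossings (hence every weight $w(\chi;C)$ and the $2$-cycle $W(D;C)$) unchanged, show by the argument of Lemma \ref{lem:2} that $\#{\rm Col}_{X}(D^{'})_{Y}=\#{\rm Col}_{X}(D)_{Y}\cdot \# G$, and cancel the factor $\# G$ against the genus increase in the normalization $(\# G)^{g(F_{V})+g(F_{W})}$. In fact your treatment is more careful than the paper's brief proof, notably in verifying that condition C3 forces exactly one admissible color on the new complementary region inside $\beta$ (using that $y\mapsto y\mathbin{\bar{\ast}_{h}}x$ is invertible), so that the extension map is genuinely $(\# G)$-to-$1$ and no factor of $\# Y$ enters.
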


\begin{proof}
Let $L^{'}$ be a $2$-component handlebody-link obtained from $L$ by applying a stabilization, and we denote by $D^{'}$ a diagram of an oriented spatial trivalent graph representing $L^{'}$.
By the definition of a stabilization of handlebody-links, attached solid tori are separated, then the condition of crossings of $D^{'}$ is same as that of $D$.
Moreover, by using an argument similar to that used in the proof of Lemma \ref{lem:2}, we can show that $\#{\rm Col}_{X}(D^{'})_{Y}=\#{\rm Col}_{X}(D_{V})_{Y} \cdot \# G$.
Then, we also have an invariance of the weight of each crossing.
Therefore, we obtain the required conclusion.
\end{proof}

For a 2-component handlebody-link, its linking number is an invariant  (refer to \cite{miz}).
We have a $2$-component handlebody-link from a closed connected orientable surface in $S^3$ by considering Heegaard splittings of the connected components of the exterior of the surface.
By the definition of the stabilization of handlebody-links, we can show that linking number does not change after applying a stabilization.
Then, the linking number of a 2-component handlebody-link obtained from $F$ is also an isotopy invariant of $F$.

\section{Examples}
We compute our invariants given in Theorem \ref{thm:6} for the following examples of bi-knotted surfaces (see Definition \ref{surfaces}).
Throughout this section, we set $X=\mathbb{Z}/3\mathbb{Z}$, $G=\mathbb{Z}/2\mathbb{Z}$, $g\ast_{0}h:=g$, $g\ast_{1}h=2h-g$ for any $g,h\in \mathbb{Z}/3\mathbb{Z}$ and $0,1\in \mathbb{Z}/2\mathbb{Z}$.
Then, it is known that $(X, \{\ast_g\}_{g \in G})$ is a $G$-family of quandles (\cite{ishii1}).

In order to construct bi-knotted surfaces, we consider two mutually disjoint $3$-balls $B_1$ and $B_2$, respectively.
Then we take properly embedded arcs $\alpha_1$ and $\alpha_2$ from $B_1$ and $B_2$, respectively.
We assume that $\alpha_1$ is unknotted and $\alpha_2$ is knotted corresponding to the trefoil.
We denote by $N(\alpha_{1})$ and $N(\alpha_{2})$ regular neighborhoods of $\alpha_1$ and $\alpha_2$.
Then we remove regular neighborhoods $N(\alpha_{1})$ and $N(\alpha_{2})$ from $B_1$ and $B_2$.
We set $B^{'}_{1}={\rm cl}(B_{1}\setminus N(\alpha_{1}))$ and $B^{'}_{2}={\rm cl}(B_{2}\setminus N(\alpha_{2}))$.
Then we connect $B^{'}_{1}$ and $B^{'}_{2}$ by the $1$-handle $D^{1}\times D^{2}$ so that $\{-1\}\times D^{2}$ is attached to $\partial B^{'}_{1}$, $\{1\}\times D^{2}$ is attached to $\partial B^{'}_{2}$, and the attached $1$-handle throughs straightforwardly both regions which are obtained from $B_{1}$ and $B_2$ by removing $N(\alpha_{1})$ and $N(\alpha_{2})$.
We assume that the attached $1$-handle admits a tangle induced from that of $N(\alpha_{2})$.
We set $F$ as the boundary of the the resulting compact connected orientable $3$-manifold $M$ (see Figure \ref{fig:one}).
Similarly, we construct compact connected orientable $3$-manifold $M^{'}$ by using an unknotted arc and a knotted arc corresponding to the figure eight knot.
Then we set $F^{'}$ as the boundary of the $3$-manifold (see Figure \ref{fig:two}).

We assume that $M$ is embedded in $S^3$.
Then, $M$ is one of the connected components of the exterior of $F=\partial M$, say, $V_F$ (see Figure \ref{fig:three}).
Then, we obtain the other connected components of the exterior of $F$ by the following procedures.
We first consider the closure of the exterior of $B_{1}^{'}$ in $S^3$, which is obtained from a $3$-ball $B^3$ by attaching $N(\alpha_{1})$.
Then, we remove the interior of $B_{2}$ from $B^3$ and attach $N(\alpha_{2})$ to $\partial B_{2}$.
Finally, we remove the $2$-handle $D^{2}\times D^{1}$ so that $D^{2}\times \{-1\}$ is attached to $\partial B^3$,  $D^{2}\times \{1\}$ is attached to $\partial B_{2}$, and the $2$-handle throughs both $N(\alpha_{1})$ and $N(\alpha_{2})$ straightforwardly.
The resulting compact connected orientable $3$-manifold, say $W_F$, is the other connected component of $F$ (see Figure \ref{fig:four}).
We set $M^{'}=V_{F}^{'}$ (see Figure \ref{fig:seven}).
Then, by a similar argument, we obtain the other connected component $W_{F}^{'}$ of $F^{'}$ (see Figure \ref{fig:eight}).

Moreover, by removing 2-handles from $V_{F}$ and from $W_F$, respectively, we obtain two handlebody-knots $H_{V}$ and $H_{W}$ (Figures \ref{fig:five} and \ref{fig:six}).
Then, by projecting their oriented spatial trivalent graphs on $\mathbb{R}^{2}$, we obtain diagrams $D_{V}$ and $D_{W}$ of $H_{V}$ and $H_{W}$ (Figure \ref{D_V} and Figure \ref{D_W}).
By using a similar argument, we obtain handlebody-knots $H_{F}^{'}$ and $H_{W}^{'}$ (Figures \ref{fig:nine} and \ref{fig:ten}).
Similarly, by projecting their oriented spatial trivalent graphs on $\mathbb{R}^{2}$, we obtain diagrams $D^{'}_{V}$ and $D^{'}_{W}$ of $H^{'}_{V}$ and $H^{'}_{W}$ (Figures \ref{D_V^{'}} and \ref{D_W^{'}})

\begin{exam}
For the surface $F$ (Figure \ref{fig:one}), we compute $\# \text{Col}_{X}(D_{V})$ and $\# \text{Col}_{X}(D_{W})$.
Then we have $\# \text{Col}_{X}(D_{V})=6$ and $\# \text{Col}_{X}(D_{W})=48$.
Therefore we obtain
\[
\frac{\# \text{Col}_{X}(D_{V})}{(\# (\mathbb{Z}/2\mathbb{Z}))^{3}}=\frac{3}{4},\ 
\frac{\# \text{Col}_{X}(D_{W})}{(\#(\mathbb{Z}/2\mathbb{Z}))^{3}}=6.
\]
\end{exam}

\begin{exam}
Let us now focus on the surface $F^{'}$ (Figure \ref{fig:two}).
We also compute $\# \text{Col}_{X}(D^{'}_{V})$ and $\# \text{Col}_{X}(D^{'}_{W})$.
Then we have $\# \text{Col}_{X}(D^{'}_{V})=6$ and $\# \text{Col}_{X}(D^{'}_{W})=24$.
Hence we have
\[
\frac{\# \text{Col}_{X}(D_{V^{'}})}{(\# (\mathbb{Z}/2\mathbb{Z}))^{3}}=\frac{3}{4},\ 
\frac{\# \text{Col}_{X}(D_{W^{'}})}{(\# (\mathbb{Z}/2\mathbb{Z}))^{3}}=3.
\]
\end{exam}

We can show that two surfaces $F$ and $F^{'}$ are not isotopic by using our invariant.

\begin{figure}[H]
  \begin{center}
   \includegraphics[width=15cm, height=10cm]{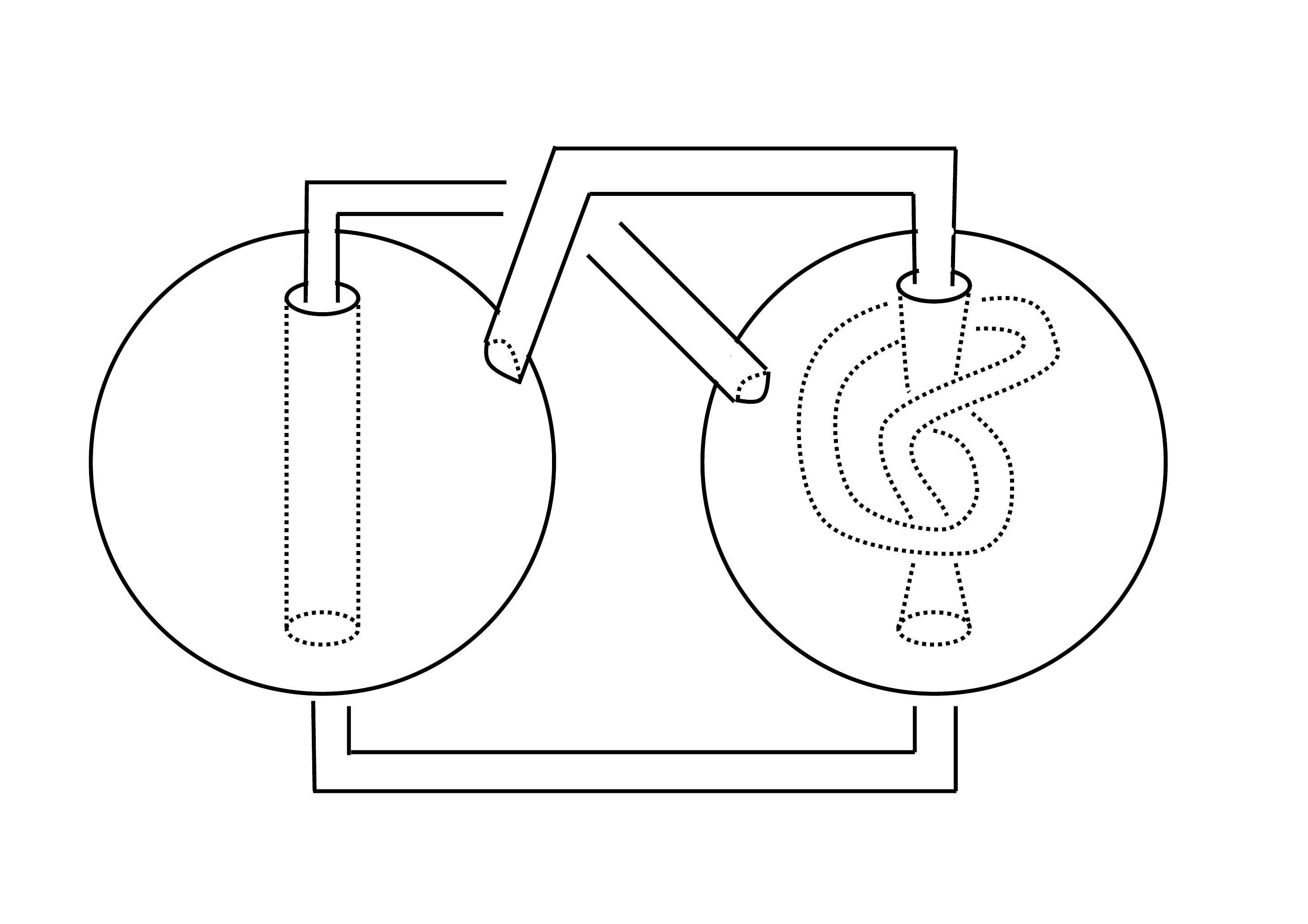}
  \end{center}
    \vspace{-13mm}
\caption{Bi-knotted surface $F$} 
  \label{fig:one}
\end{figure}
  
\begin{figure}[H]  
  \begin{center}
   \includegraphics[width=15cm, height=10cm]{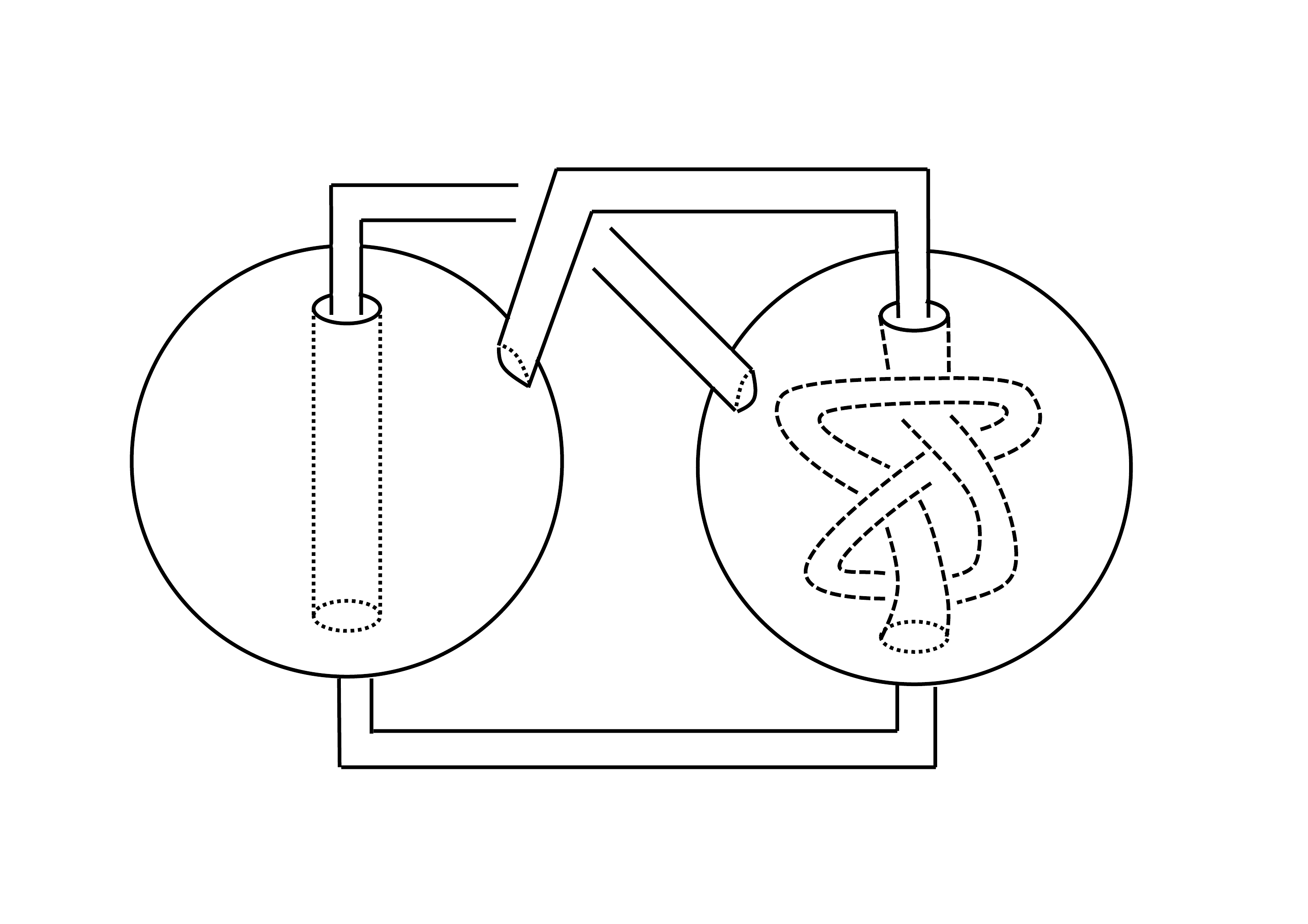}
  \end{center}
    \vspace{-13mm}
  \caption{Bi-knotted surface $F^{'}$}
  \label{fig:two}
\end{figure}

\begin{figure}[H]
  \begin{center}
   \includegraphics[width=15cm, height=10cm]{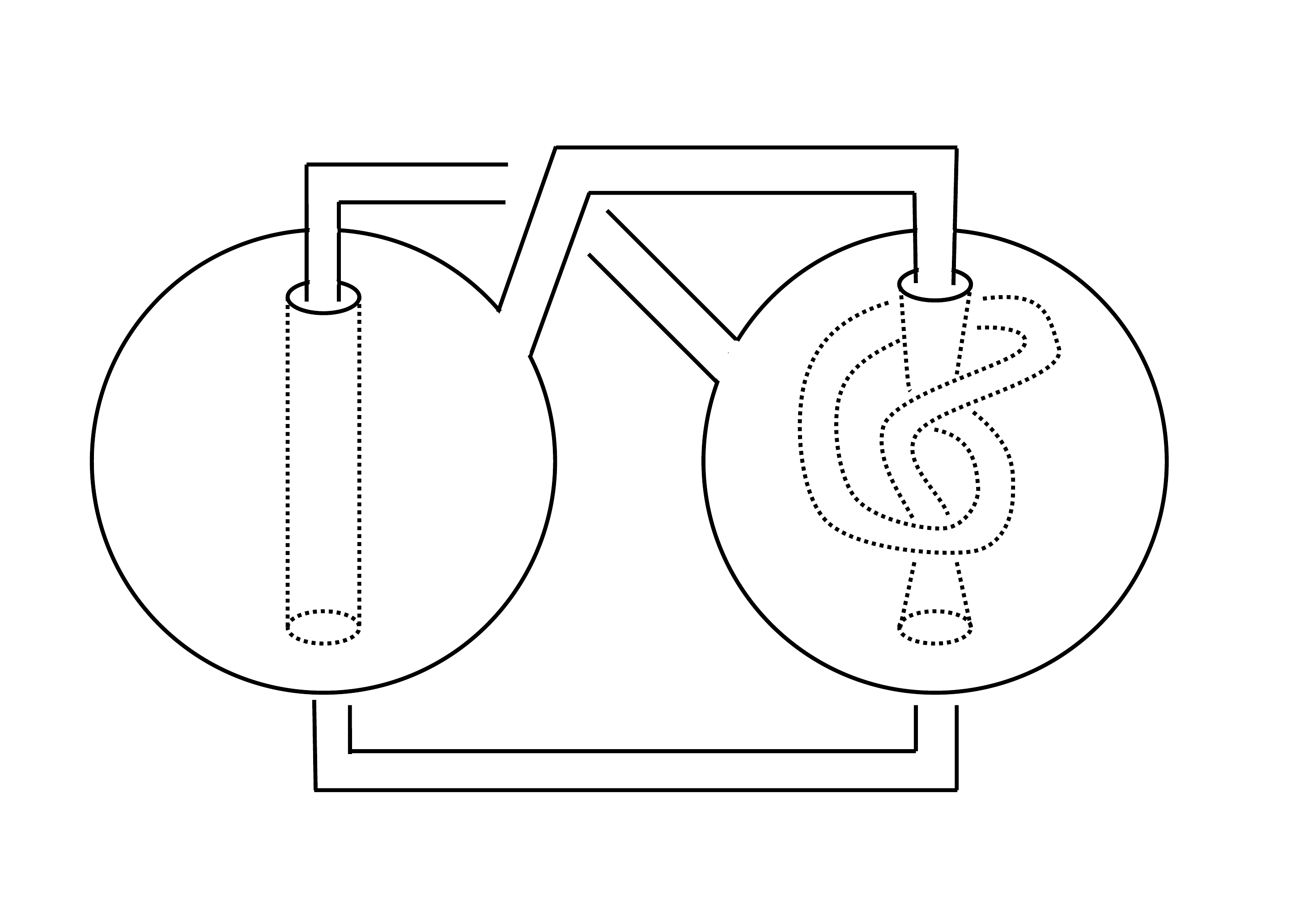}
  \end{center}
   \vspace{-13mm}
\caption{Exterior component $V_{F}$} 
  \label{fig:three}
\end{figure}  
  
\begin{figure}[H]  
  \begin{center}
   \includegraphics[width=15cm, height=10cm]{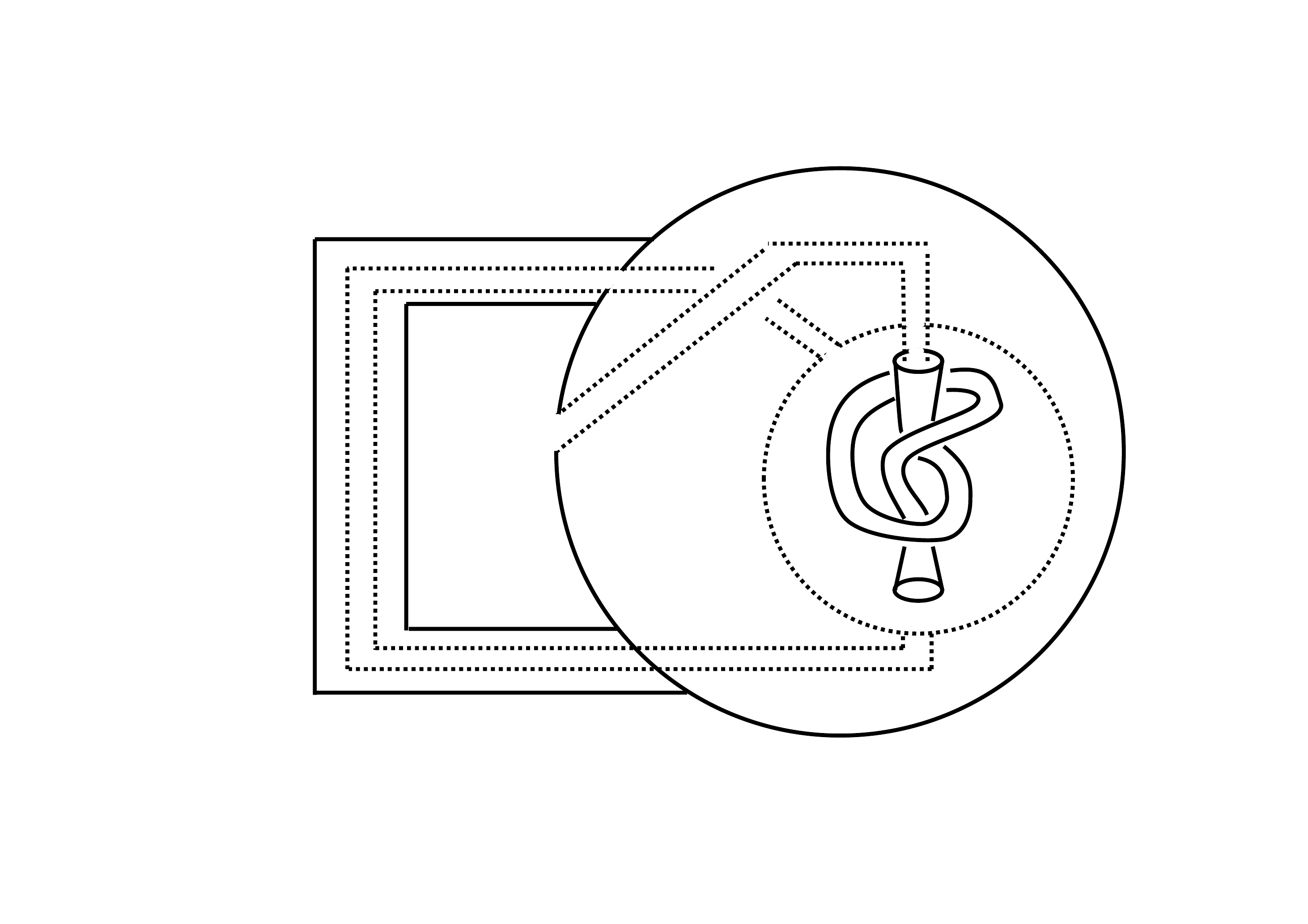}
  \end{center}
  \vspace{-13mm}
  \caption{Exterior component $W_{F}$}
  \label{fig:four}
\end{figure}

\vspace{-5mm}
\begin{figure}[H]
  \begin{center}
   \includegraphics[width=15cm, height=10cm]{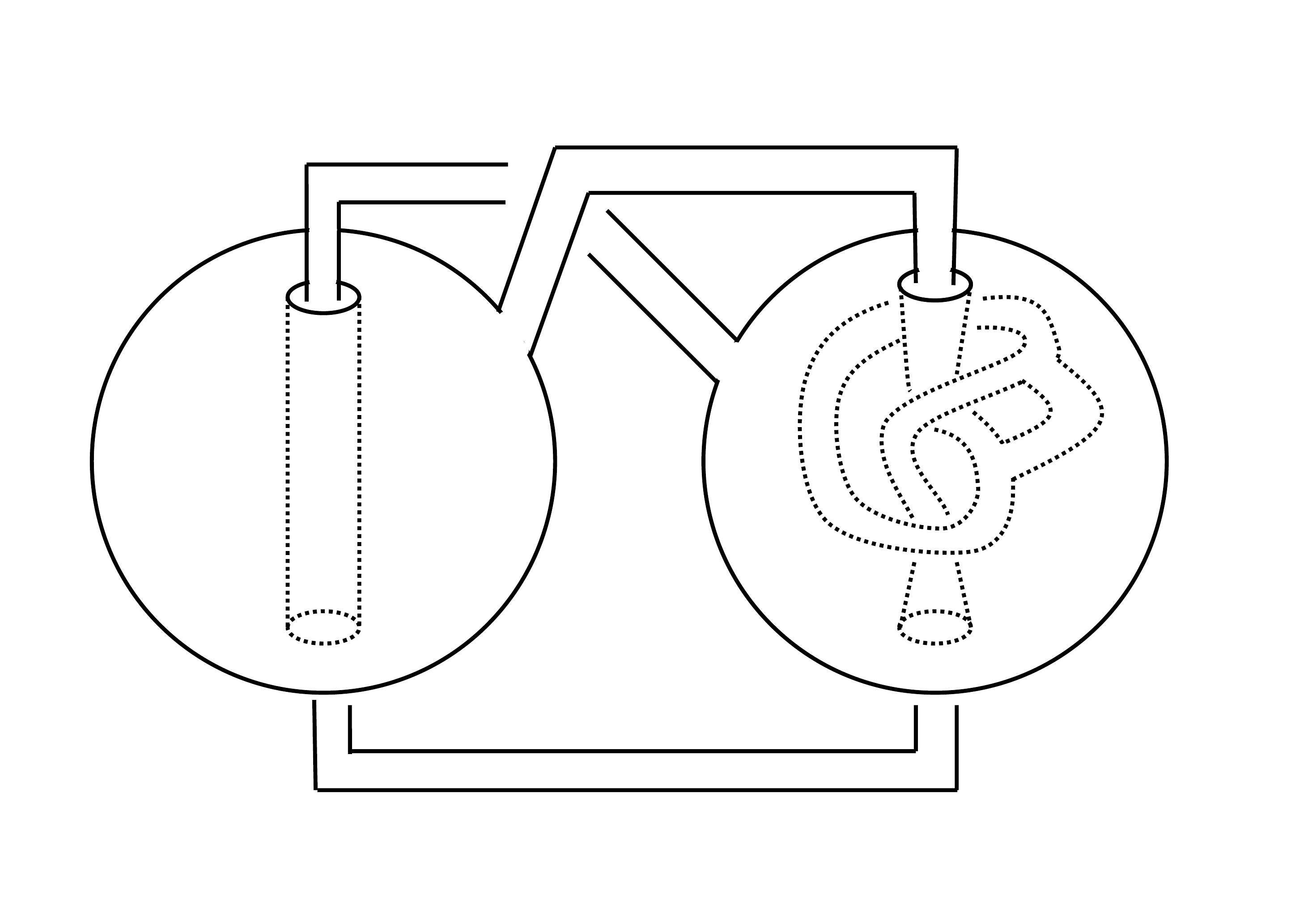}
  \end{center}
  \vspace{-13mm}
\caption{Handlebody-knot $H_{V}$} 
  \label{fig:five}
\end{figure}  
  
\begin{figure}[H]  
  \begin{center}
   \includegraphics[width=15cm, height=10cm]{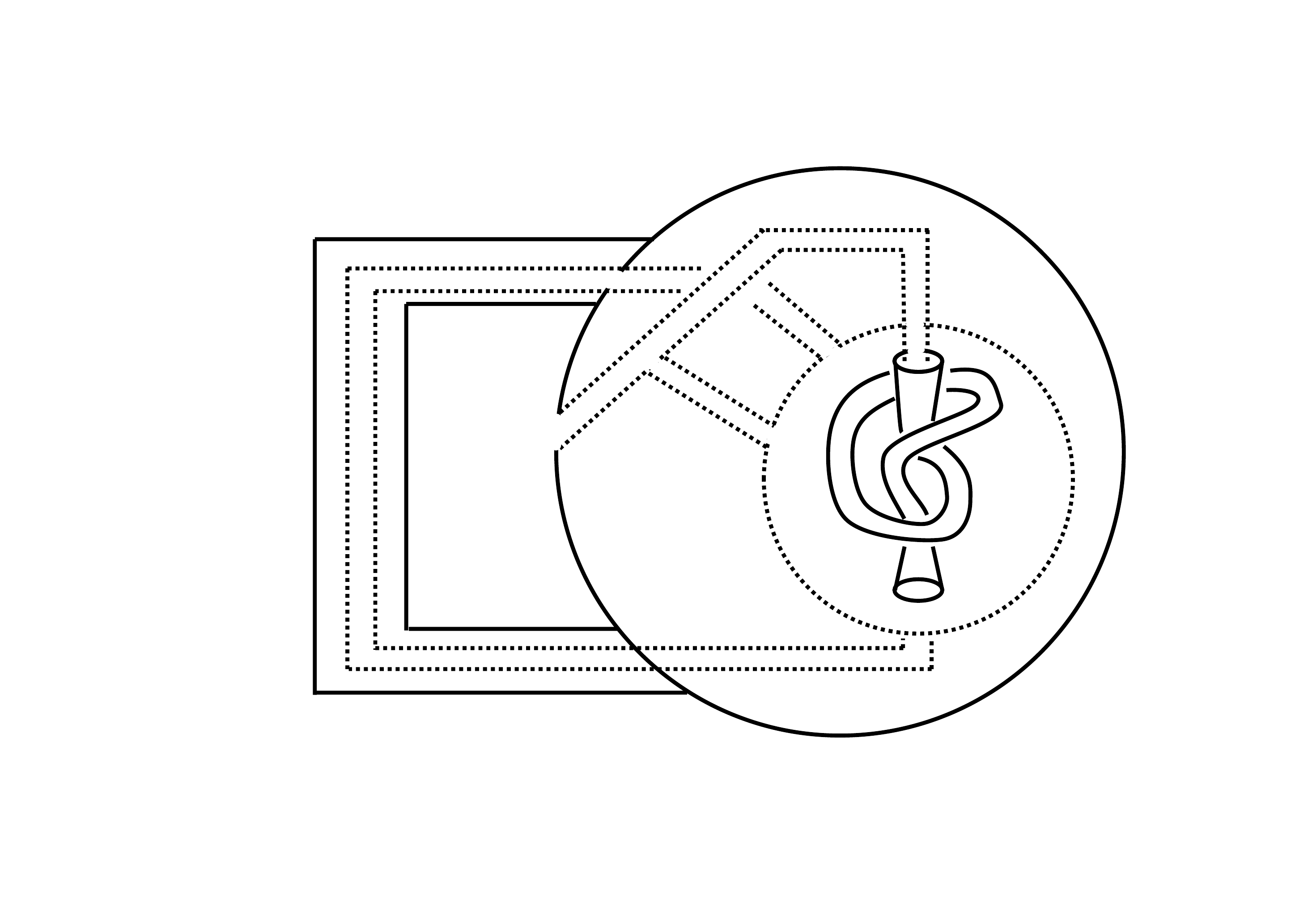}
  \end{center}
  \vspace{-13mm}
  \caption{Handlebody-knot $H_{W}$}
  \label{fig:six}
\end{figure}

\vspace{-5mm}
\begin{figure}[H]
  \begin{center}
   \includegraphics[width=15cm, height=10cm]{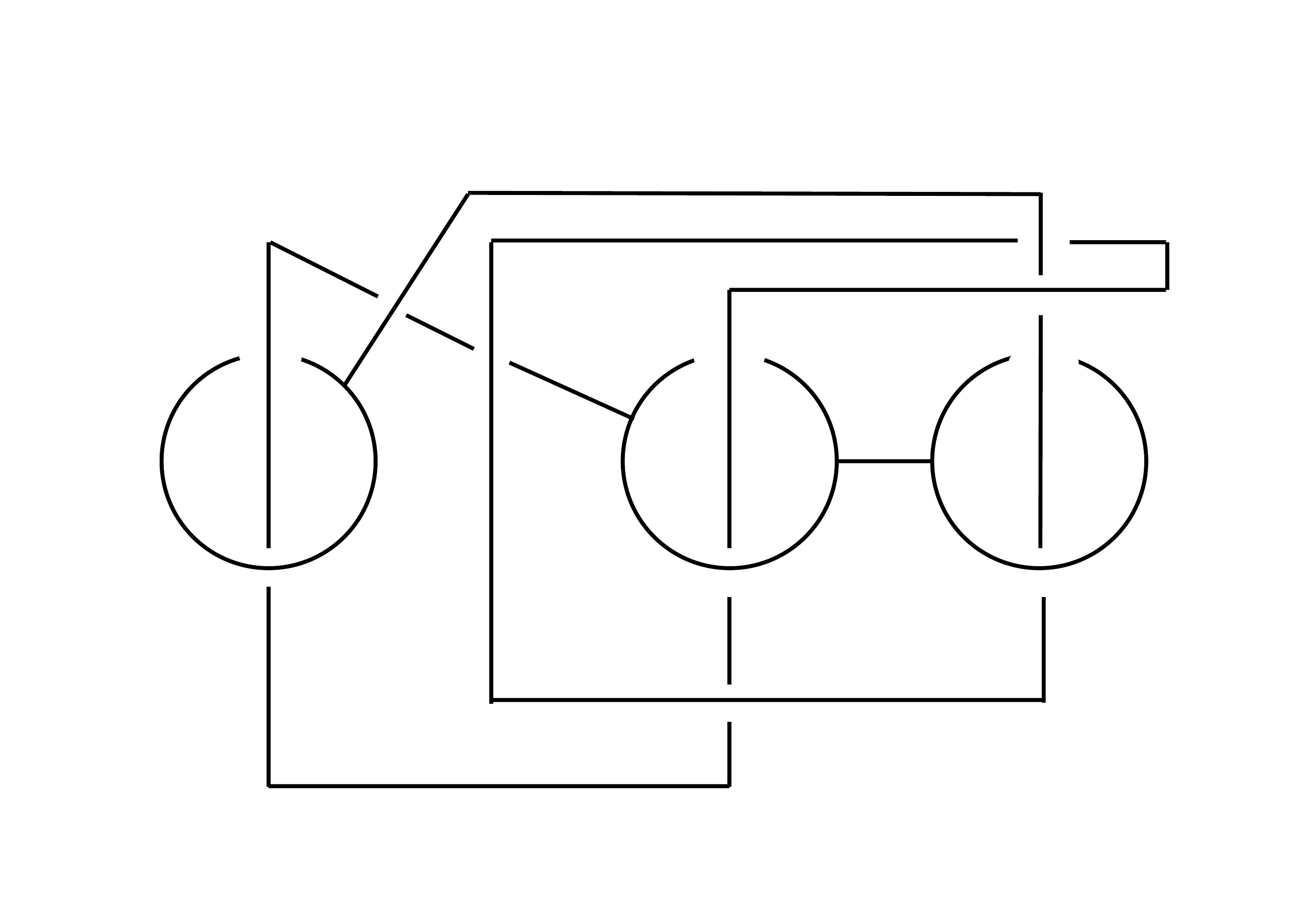}
  \end{center}
  \vspace{-13mm}
\caption{Diagram $D_{V}$ of the handlebody-knot $H_{V}$} 
  \label{D_V}
\end{figure}

\begin{figure}[H]
  \begin{center}
   \includegraphics[width=15cm, height=10cm]{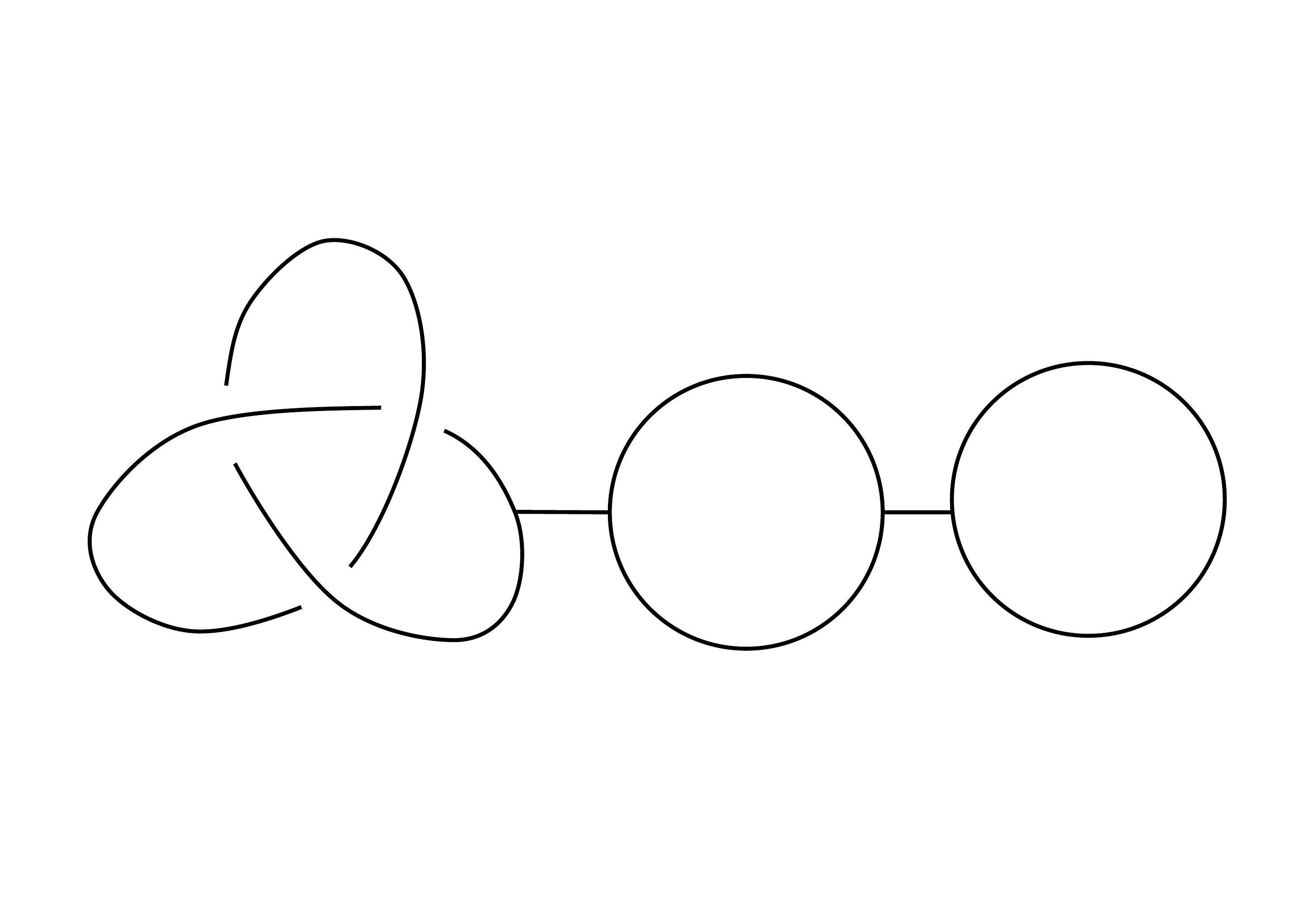}
  \end{center}
  \vspace{-13mm}
  \caption{Diagram $D_{W}$ of the handlebody-knot $H_{W}$}
  \label{D_W}
\end{figure}

\begin{figure}[H]
  \begin{center}
   \includegraphics[width=15cm, height=10cm]{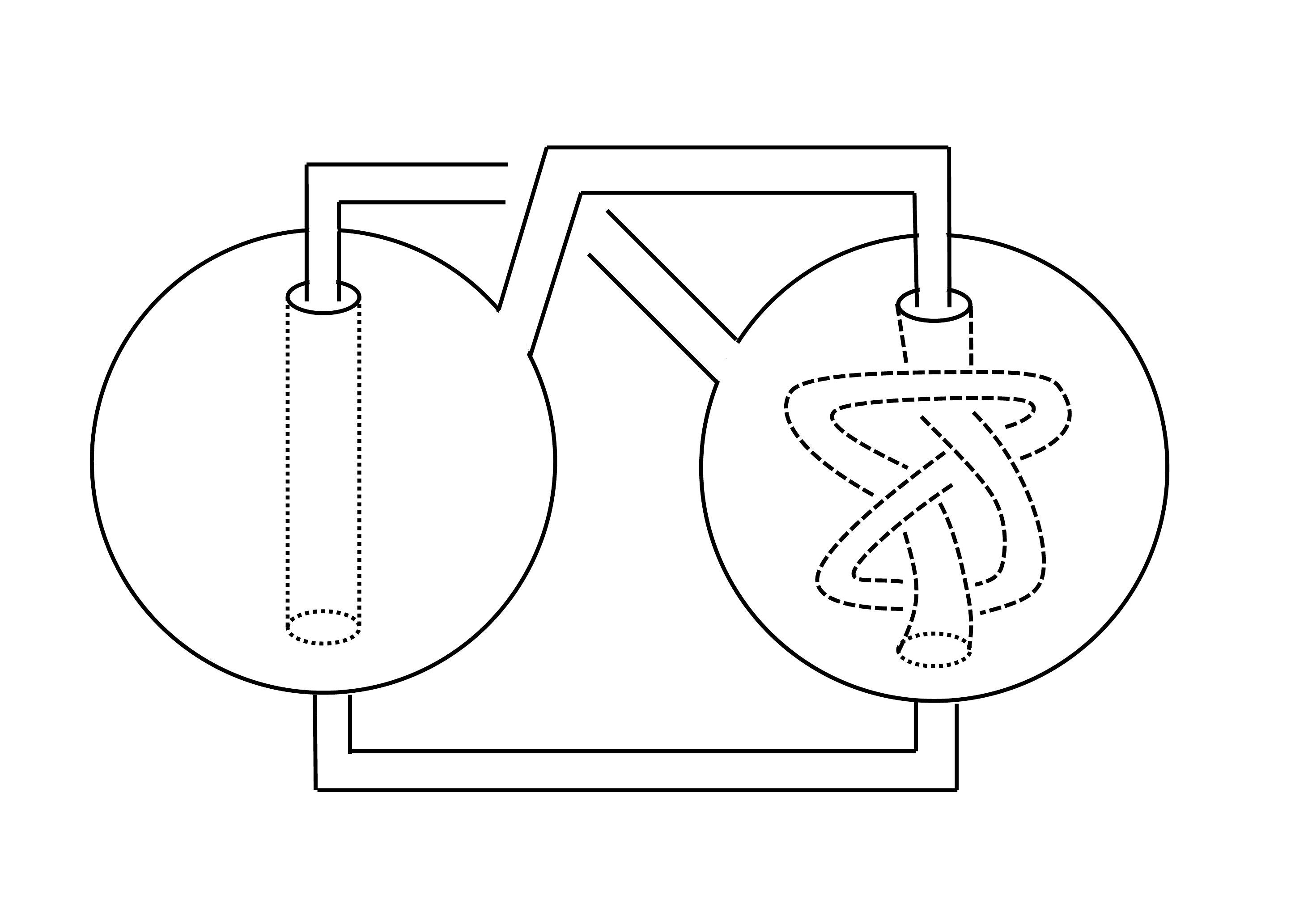}
  \end{center}
  \vspace{-13mm}
\caption{Exterior component $V_{F^{'}}$} 
  \label{fig:seven}
 \end{figure}  
  
\begin{figure}[H]  
  \begin{center}
   \includegraphics[width=15cm, height=10cm]{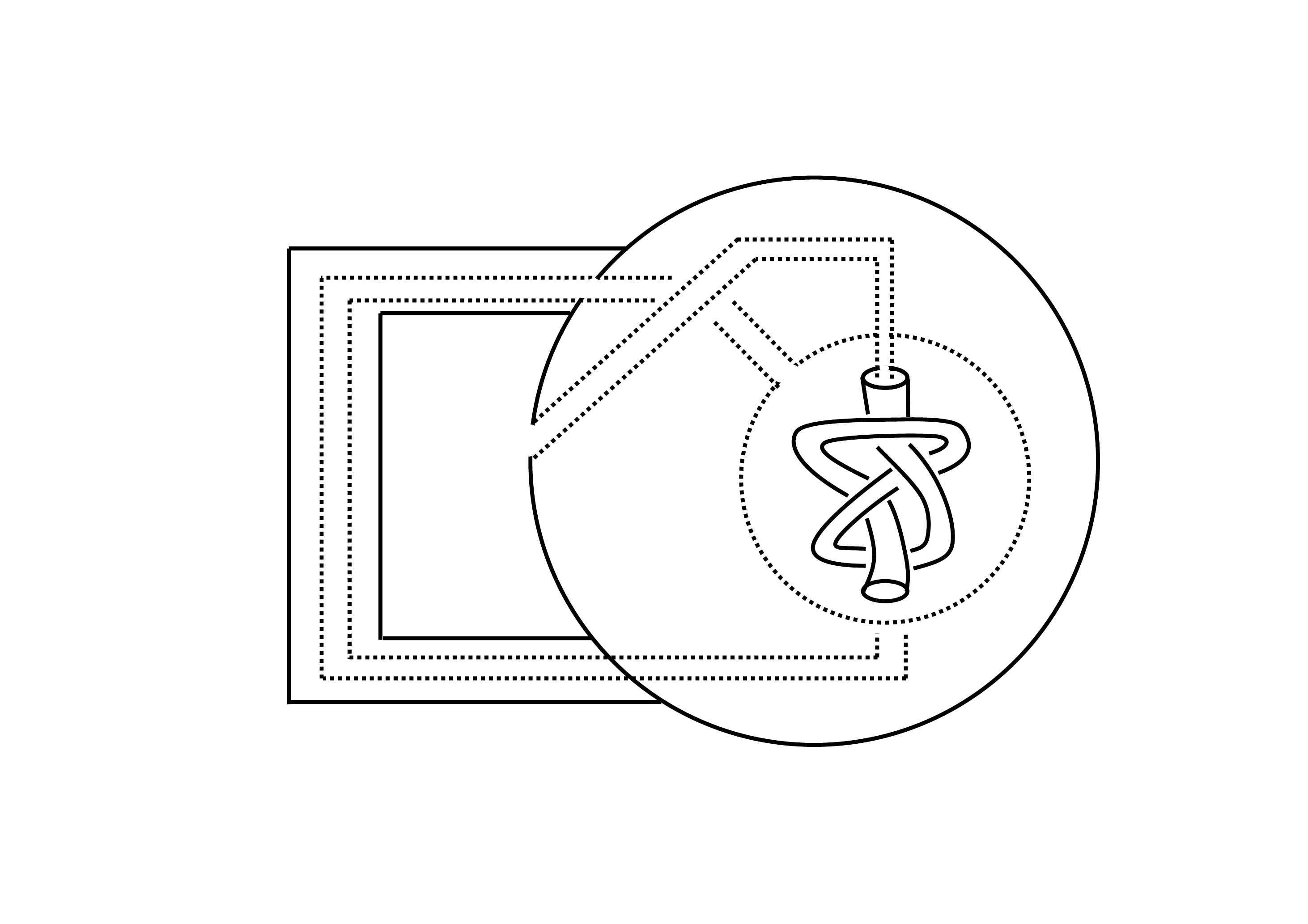}
  \end{center}
  \vspace{-13mm}
  \caption{Exterior component $W_{F^{'}}$}
  \label{fig:eight}
\end{figure}

\begin{figure}[H]
  \begin{center}
   \includegraphics[width=15cm, height=10cm]{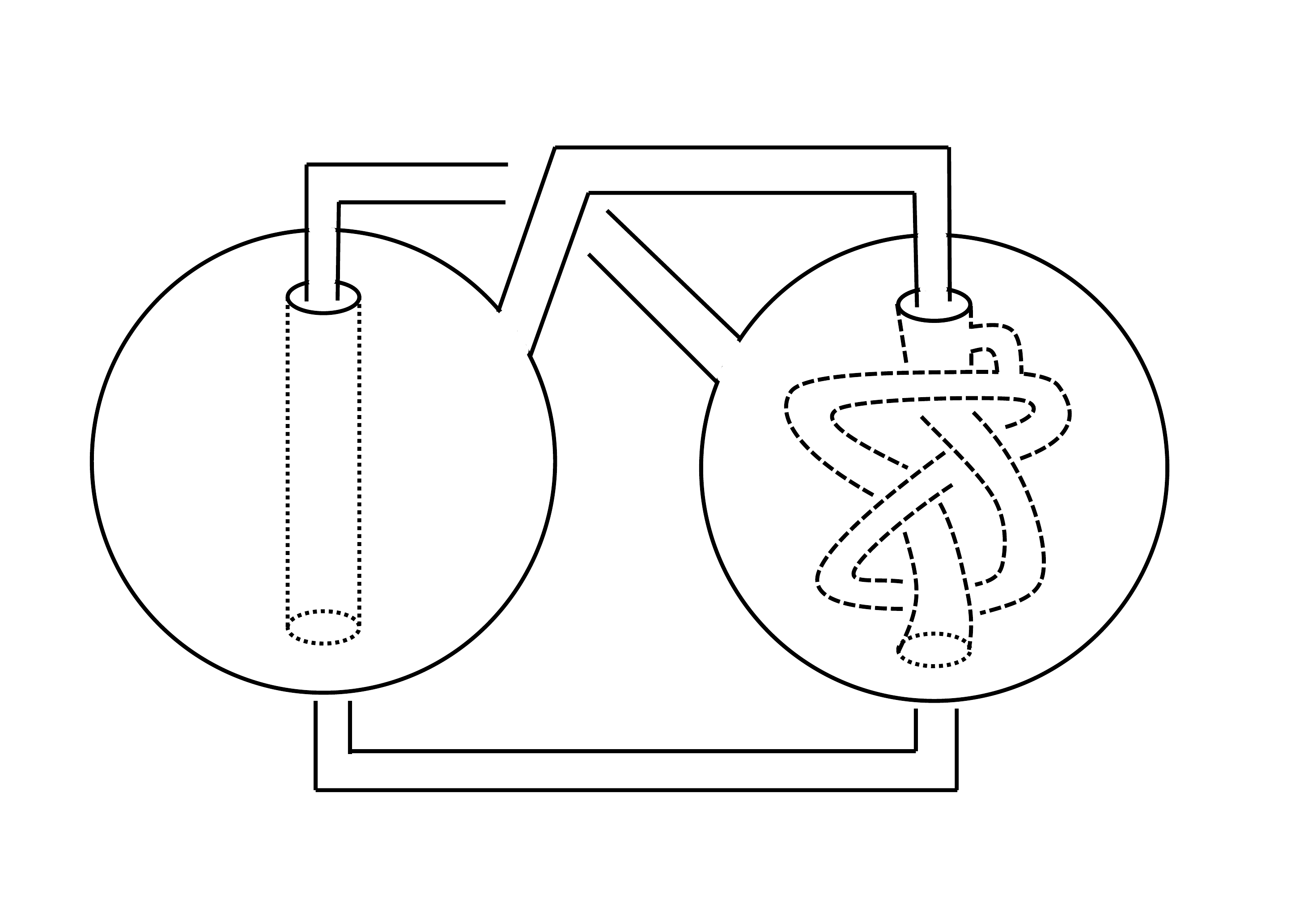}
  \end{center}
  \vspace{-13mm}
\caption{Handlebody-knot $H_{V^{'}}$} 
  \label{fig:nine}
 \end{figure}
  
 \begin{figure}[H]
  \begin{center}
   \includegraphics[width=15cm, height=10cm]{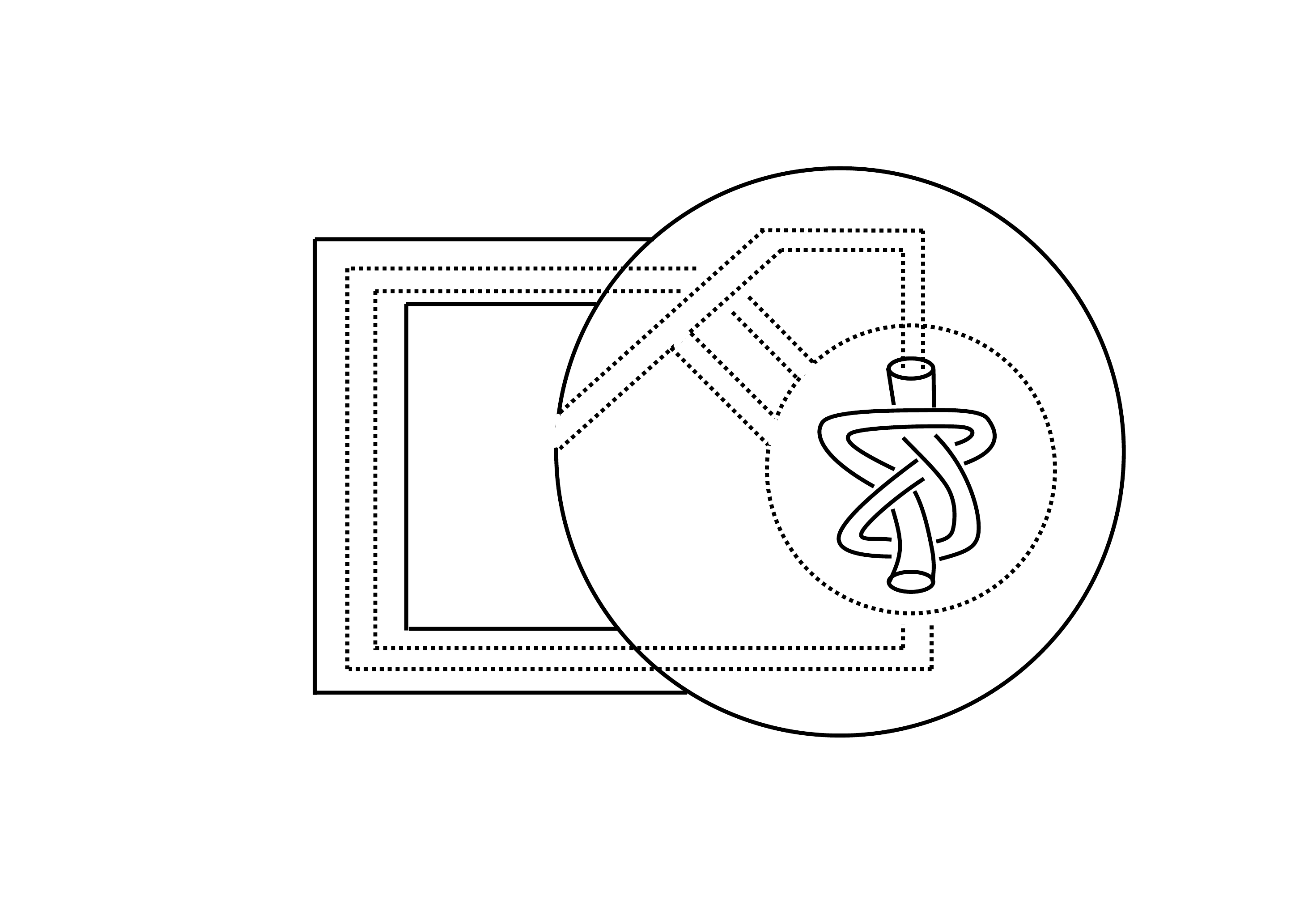}
  \end{center}
  \vspace{-13mm}
  \caption{Handlebody-knot $H_{W^{'}}$}
  \label{fig:ten}
\end{figure}

\vspace{-5mm}
\begin{figure}[H]
\begin{center}
\includegraphics[width=15cm, height=10cm]{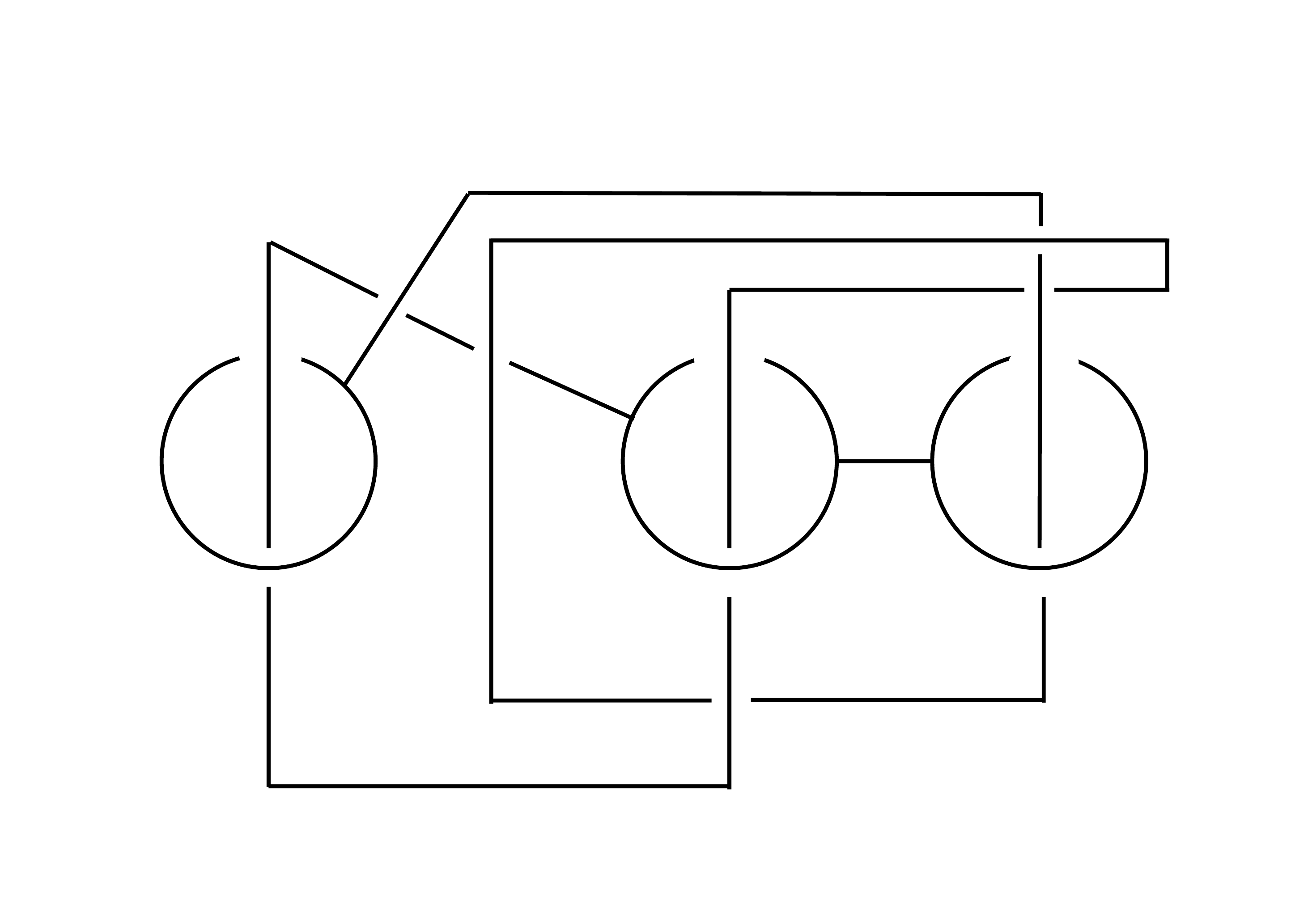}
\end{center}
\vspace{-13mm}
\caption{Diagram $D_{V^{'}}$ of the handlebody-knot $H_{V^{'}}$} 
\label{D_V^{'}}
\end{figure}

\begin{figure}[H]
\begin{center}
\includegraphics[width=15cm, height=10cm]{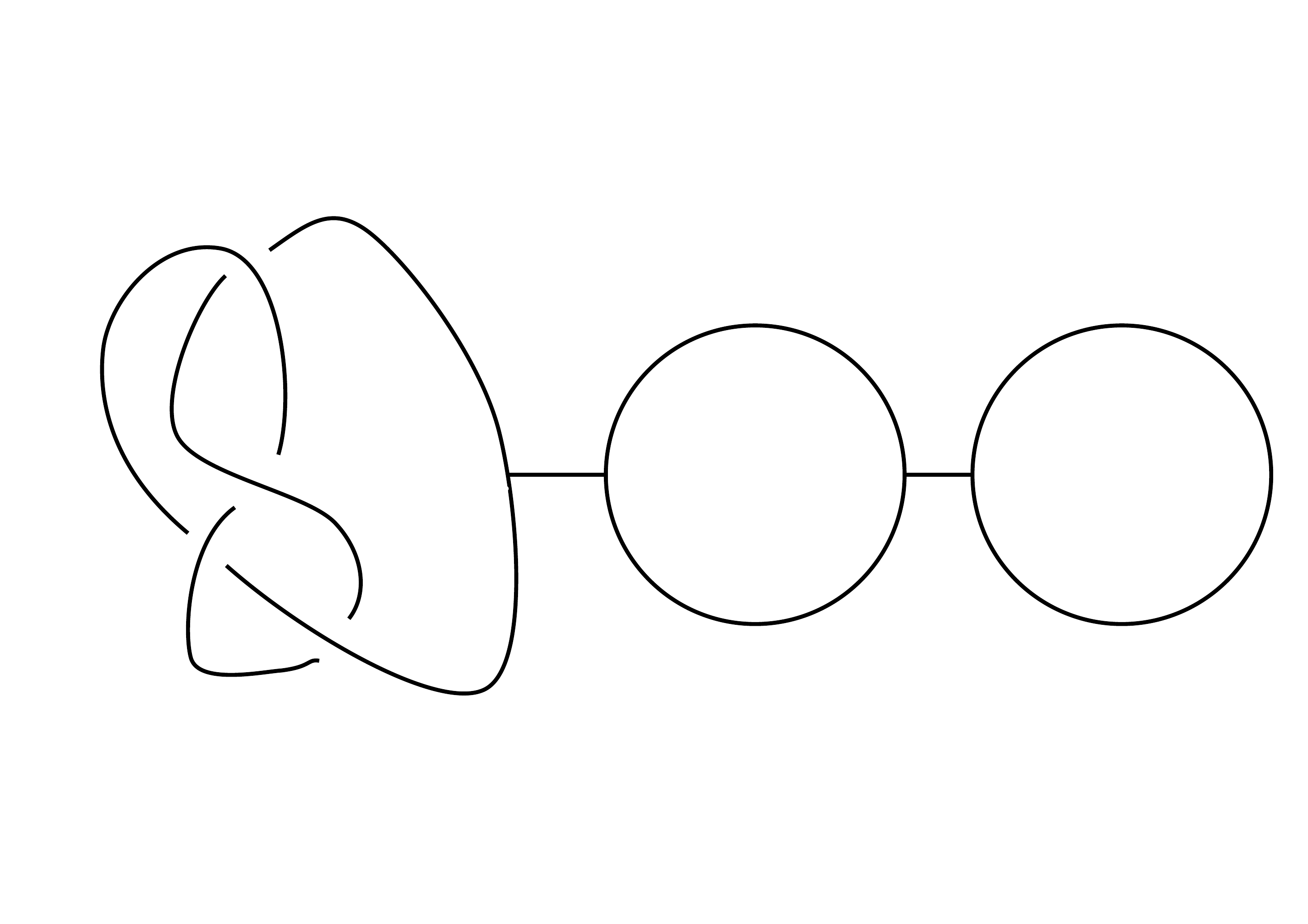}
\end{center}
\vspace{-13mm}
\caption{Diagram $D_{W^{'}}$ of the handlebody-knot $H_{W^{'}}$}
\label{D_W^{'}}
\end{figure}

\section{Further results}
In this section, we study the relationship between surfaces in $S^3$ and 2-component handlebody-links.
We also study closed connected orientable surfaces in $S^3$ from geometric viewpoints.
Then, we construct a geometric invariant of surfaces in $S^3$.
We also give a necessary condition of the existence of a surface which is obtained from a $2$-component handlebody-link corresponding to minimal genus Heegaard splittings of the closures of the connected components of the exterior of the surface.
We denote the interval $[-1,1]$ and the unit 2-disk by $D^1$ and $D^2$, respectively.

\subsection{Surfaces in $S^3$ and handlebody-links}

Let us start with a classification of embedded surfaces into three classes.
We denote by $V_F$ and $W_F$ the closures of the connected components of the exterior of a given embedded surface $F$.
\begin{defi}\label{surfaces}
Let $F$ be a closed connected orientable surface in $S^3$.
\begin{itemize}
\item [(i)] The surface $F$ is said to be an {\it unknotted surface} if both $V_{F}$ and $W_F$ are homeomorphic to handlebodies.
\item [(ii)] The surface $F$ is said to be a {\it knotted surface} if exactly one of $V_{F}$ or $W_F$ is homeomorphic to a handlebody.
\item [(iii)] The surface $F$ is said to be a {\it bi-knotted surface} if neither $V_{F}$ nor $W_F$ is homeomorphic to a handlebody.
\end{itemize}
\end{defi}

We first give a necessary and sufficient condition of the existence of a closed connected orientable surface in $S^3$ corresponding to a given 2-component handlebody-link.
We use Kneser's theorem and a characterization of $3$-manifolds (with connected boundary) whose fundamental groups are free.
We also use Lemma \ref{irreducible}, then we have the following.

\begin{prop}\label{iff}
Let $L=H_{1}\sqcup H_{2}$ be a $2$-component handlebody-link.
We denote by $E(L):=S^{3}\setminus{\rm int}(L)$ the exterior of $L$.
Then, there exists a closed connected orientable surface $F$ in $S^3$ such that $\partial H_{1}$ and $\partial H_{2}$ are Heegaard surfaces of the connected components of the exterior of $F$ if and only if the fundamental group $\pi_{1}(E(L))$ is given by free products of the fundamental group of a closed connected orientable surface and some infinite cyclic groups.
\end{prop}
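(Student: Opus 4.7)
The plan is to prove both implications. The forward direction is a van Kampen calculation using the compression body structure; the reverse direction uses Kneser's theorem and the characterization (cited in the excerpt) that a compact orientable irreducible $3$-manifold with connected boundary and free fundamental group is a handlebody.

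For the forward direction, suppose $F$ exists with the stated Heegaard structure. Then $E(L)=C_{1}\cup_{F}C_{2}$, where each $C_{i}$ is a compression body with $\partial_{+}C_{i}=\partial H_{i}$ and $\partial_{-}C_{i}=F$. Since $C_{i}$ is obtained from $F\times[0,1]$ by attaching $k_{i}:=g(\partial H_{i})-g(F)$ disjoint $1$-handles, it deformation retracts onto $F$ with $k_{i}$ arcs attached; hence $\pi_{1}(C_{i})\cong \pi_{1}(F)\ast \mathbb{F}_{k_{i}}$, where $\mathbb{F}_{k_{i}}$ is the free group of rank $k_{i}$ and the $\pi_{1}(F)$ factor is the (injective) image of $\pi_{1}(F)\to \pi_{1}(C_{i})$, which is injective because $\partial_{-}C_{i}=F$ is incompressible in $C_{i}$. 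Applying the Seifert--van Kampen theorem to $E(L)=C_{1}\cup_{F}C_{2}$ gives
\[
\pi_{1}(E(L))\cong \pi_{1}(C_{1})\ast_{\pi_{1}(F)}\pi_{1}(C_{2})\cong \pi_{1}(F)\ast \mathbb{F}_{k_{1}+k_{2}},
\]
which is precisely a free product of $\pi_{1}(F)$ and $k_{1}+k_{2}$ infinite cyclic groups.

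For the reverse direction, assume $\pi_{1}(E(L))\cong \pi_{1}(\Sigma_{g})\ast \mathbb{F}_{n}$. I would first apply Kneser's theorem to decompose $E(L)$ into a connected sum of prime factors. Combined with the uniqueness of free product decompositions (Grushko's theorem) and the hypothesis on $\pi_{1}(E(L))$, this singles out one distinguished prime piece $P$ with $\pi_{1}(P)\cong \pi_{1}(\Sigma_{g})$, while the $n$ infinite cyclic factors are accounted for by prime pieces with infinite cyclic fundamental group (corresponding geometrically to $1$-handles). Within $P$ I would then produce a closed connected orientable incompressible surface $F$ of genus $g$ that separates $\partial H_{1}$ from $\partial H_{2}$.

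Given such an $F$, cutting $E(L)$ along $F$ yields two compact pieces $E_{1},E_{2}$ with $\partial E_{i}=F\sqcup \partial H_{i}$; by Lemma \ref{irreducible} each $E_{i}$ is irreducible. Filling in the $F$-side of $E_{i}$ by an abstract genus-$g$ handlebody $H_{g}$ produces a compact irreducible $3$-manifold $\widehat{E}_{i}$ with connected boundary $\partial H_{i}$, and the assumed form of $\pi_{1}(E(L))$ together with the surjectivity of $\pi_{1}(F)\to \pi_{1}(H_{g})$ forces $\pi_{1}(\widehat{E}_{i})$ to be free of rank $g(\partial H_{i})$. The cited characterization then identifies $\widehat{E}_{i}$ as a handlebody, so $E_{i}$ is a compression body with $\partial_{+}E_{i}=\partial H_{i}$ and $\partial_{-}E_{i}=F$; hence $\partial H_{1}$ and $\partial H_{2}$ are Heegaard surfaces of the two components of $S^{3}\setminus F$. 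The main obstacle is passing from the algebraic free product decomposition of $\pi_{1}(E(L))$ to an actual embedded incompressible surface $F$ realizing the $\pi_{1}(\Sigma_{g})$ factor and correctly separating $\partial H_{1}$ from $\partial H_{2}$; this step requires combining Kneser's prime decomposition with careful analysis of how $H_{1}$ and $H_{2}$ are distributed among the prime pieces and of the possibly non-separating spheres in $E(L)$.
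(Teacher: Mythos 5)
Your forward direction is correct and complete: writing $E(L)=C_{1}\cup_{F}C_{2}$ with $\partial_{-}C_{i}=F$ and applying Seifert--van Kampen to get $\pi_{1}(F)\ast\mathbb{F}_{k_{1}+k_{2}}$ is exactly the right computation (the injectivity remark is not even needed for the pushout, since $\pi_{1}(F)\to\pi_{1}(C_{i})$ is visibly the inclusion of a free factor). For comparison: the paper records no proof of Proposition \ref{iff} at all --- it only names the intended ingredients, namely Kneser's theorem, the characterization of compact orientable irreducible $3$-manifolds with connected boundary and free fundamental group as handlebodies, and Lemma \ref{irreducible} --- so your toolkit matches the announced one, and any assessment must rest on whether your assembly of these tools works.

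It does not, and the reverse direction has two concrete gaps beyond the one you acknowledge. First, your plan to account for the $n$ infinite cyclic factors by ``prime pieces with infinite cyclic fundamental group'' fails: free factors of $\pi_{1}(E(L))$ need not come from essential spheres at all. For example, let $F$ be the unknotted torus, $H_{1}$ one of the complementary solid tori, and $H_{2}$ a once-stabilized (genus-$2$) Heegaard handlebody of the other solid torus; then $E(L)$ is a compression body with $\partial_{-}\cong T^{2}$, hence irreducible and prime, yet $\pi_{1}(E(L))\cong\mathbb{Z}^{2}\ast\mathbb{Z}$. Kneser's sphere decomposition is blind to such $\mathbb{Z}$ factors, which arise from essential \emph{disks} (boundary compressions); the device that sees them is the incompressible neighborhood of $\partial H_{i}$ (Theorem \ref{mac}), with Lemma \ref{irreducible} controlling irreducibility of the cut pieces --- presumably why the paper cites both. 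Second, your final inference ``$\widehat{E}_{i}=E_{i}\cup_{F}H_{g}$ is a handlebody, hence $E_{i}$ is a compression body'' is false as stated: if $T$ is a regular neighborhood of a knotted core curve in a genus-$2$ handlebody $H$ and $E_{i}=\mathrm{cl}(H\setminus T)$, then $E_{i}\cup_{\partial T}T=H$ is a handlebody (and $F=\partial T$ can be incompressible in $E_{i}$), but $E_{i}$ is not a compression body with $\partial_{-}E_{i}=\partial T$. The homeomorphism type of the filled manifold forgets how $F$ sits inside it, so no conclusion about $E_{i}$ follows; what is actually needed is control of $\pi_{1}(E_{i})$ itself, e.g.\ showing via Kurosh/Grushko that ${\rm Image}(\pi_{1}(\partial H_{i})\to\pi_{1}(E_{i}))=\pi_{1}(E_{i})$ and then invoking the incompressible-neighborhood machinery (or a $\pi_{1}$-surjectivity characterization of compression bodies) to identify $E_{i}$ directly. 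Until the embedded surface $F$ is produced and this last step is repaired, the reverse implication remains unproved.
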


Generally, since a Heegaard splitting of a compact connected orientable $3$-manifold is not unique, then a 2-component handlebody-link associated to a closed connected orientable surface in $S^3$ is not unique.
However, by using the Reidemeister--Singer theorem, we have the following.

\begin{prop}
Let $F$ be a closed connected orientable surface in $S^3$ and $L$ be an associated $2$-component handlebody-link of $F$, respectively.
Then, $L$ is unique up to stabilizations of Heegaard splittings of the closures of the connected components of  $S^{3}\setminus F$.
\end{prop}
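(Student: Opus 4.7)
The plan is to combine the Reidemeister--Singer theorem (Theorem \ref{thm:1}) with a local geometric check that a single stabilization of a Heegaard splitting of $V_F$ or $W_F$, in the sense of Definition \ref{def:sbz}, translates into a single stabilization of the associated 2-component handlebody-link, in the sense of Definition \ref{hls}. Once this correspondence is established, the proposition follows by applying Reidemeister--Singer on each side of $F$ independently.

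In detail, suppose $L=H_V\sqcup H_W$ and $\tilde{L}=\tilde{H}_V\sqcup \tilde{H}_W$ are two associated 2-component handlebody-links of $F$, arising from Heegaard splittings $V_F=H_V\cup_{F_V}C_V$, $V_F=\tilde{H}_V\cup_{\tilde{F}_V}\tilde{C}_V$, and similarly for $W_F$. Since $\partial V_F=F$ is connected, I fix the partition $\partial V_F=F\sqcup\emptyset$ so that the handlebody side has $\partial_-=\emptyset$ and the compression body side has $\partial_-=F$. Theorem \ref{thm:1} then guarantees that after finitely many stabilizations $(V_F,F_V)$ and $(V_F,\tilde{F}_V)$ become equivalent via an ambient isotopy of $V_F$, which extends by the identity on $W_F$ to an ambient isotopy of $S^3$. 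The same procedure is carried out inside $W_F$; because $V_F$ and $W_F$ are disjoint in $S^3$, the two procedures do not interfere.

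The key geometric step is the claim that if $(V_F,F'_V)$ is obtained from $(V_F,F_V)$ by one stabilization, then the new handlebody-knot $H'_V\subset S^3$ is a stabilization of $H_V$ in the sense of Definition \ref{hls}. Let $\alpha\subset C_V$ be the parallel arc used in the stabilization, and let $D\subset C_V$ be the parallelism disk with $\partial D=\alpha\cup\beta$ for some arc $\beta\subset F_V$. I take a small regular neighborhood $N(D)\subset C_V$; it is a $3$-ball that meets $H_V$ in a $2$-disk neighborhood of $\beta$ in $F_V$ and contains $N(\alpha)$. Then $H'_V\cap N(D)=N(\alpha)\cup(N(D)\cap H_V)$ is an unknotted solid torus $T$ inside the $3$-ball $N(D)$, attached to $H_V$ along the $2$-disk $N(D)\cap F_V$. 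This exhibits $H'_V$ as a disk sum $H_V\natural T$ along a disk on $\partial H_V$, and for a suitable cutting disk the two pieces are separated by the $2$-sphere $\partial N(D)$, as required in Definition \ref{hls}. The analogous argument applies to $W_F$, and the separating spheres for simultaneous $V$-side and $W$-side stabilizations are automatically disjoint since they lie on opposite sides of $F$.

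The main obstacle is this geometric verification, for which the parallelism disk $D$ is essential: without it, the added $1$-handle $N(\alpha)$ could in principle link non-trivially with $H_V$ inside $V_F$, and one could not produce the trivializing $3$-ball demanded by Definition \ref{hls}. The disk $D$, which lives in the \emph{compression body} $C_V$ and not in $H_V$, is precisely what supplies this $3$-ball. Once the correspondence between Heegaard-splitting stabilizations and handlebody-link stabilizations is in place, iterating finitely many such moves (as furnished by Theorem \ref{thm:1}) shows that $L$ and $\tilde{L}$ are stably equivalent in the sense of Definition \ref{hls}, proving the proposition.
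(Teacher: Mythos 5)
Your proposal is correct and takes essentially the same route as the paper: the paper deduces this proposition directly from the Reidemeister--Singer theorem (Theorem \ref{thm:1}) applied separately to Heegaard splittings of $V_F$ and $W_F$, offering no further detail, and your argument is that same reduction with the implicit steps made explicit. Your added verification that a stabilization in the sense of Definition \ref{def:sbz} realizes a handlebody-link stabilization in the sense of Definition \ref{hls} is a sound elaboration of what the paper leaves unstated; just note that Definition \ref{def:sbz} also permits the parallel arc to be taken in the handlebody side, giving $H^{'}_{V}={\rm cl}(H_{V}\setminus N(\alpha))$, a case your local picture does not literally cover, though it follows by the symmetric version of your $N(D)$ argument (or from the uniqueness of stabilization up to isotopy).
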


\begin{prop}\label{injectibity}
Let $F_1$ and $F_2$ be closed connected orientable surfaces in $S^3$.
We denote by $V_i$ and $W_i$ the closures of connected components of $S^{3}\setminus F_{i}$ $(i=1,2)$.
Let $V_{i}=H_{V_{i}}\cup C_{V_{i}}$ and $W_{i}=H_{W_{i}}\cup C_{W_{i}}$ be Heegaard splittings of $V_i$ and $W_i$ $(i=1, 2)$, respectively, where $H_{V_{i}}$ and $H_{W_{i}}$ are handlebodies and $C_{V_{i}}$ and $C_{W_{i}}$ are compression bodies.
Let $L_{1}=H_{V_{1}}\sqcup H_{W_{1}}$ and $L_{2}=H_{V_{2}}\sqcup H_{W_{2}}$ be associated $2$-component handlebody-links of $F_{1}$ and $F_2$, respectively.
If $L_{1}$ and $L_{2}$ are stably equivalent, then $F_{1}$ and $F_{2}$ are isotopic.
\end{prop}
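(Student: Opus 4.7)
The plan is to reduce, via stabilizations, to the case where $L_1$ and $L_2$ are literally the same handlebody-link $L$, and then to extract the isotopy class of the surface from $L$ by using the uniqueness of incompressible neighborhoods (Theorem \ref{mac}). To carry out the first step, I will observe that a stabilization of $L_i$ at one of its components can be realized, after an ambient isotopy of $S^3$, by positioning the attached standard solid torus inside the corresponding region $V_i$ or $W_i$. In that position the disk sum $H_{V_i} \natural T$ (respectively $H_{W_i} \natural T$) is isotopic to $H_{V_i} \cup N(\alpha)$ for a boundary-parallel arc $\alpha$ in the relevant compression body, and the latter operation is precisely a stabilization of the Heegaard splitting of $V_i$ (resp.\ $W_i$) in the sense of Definition \ref{def:sbz}; in particular it keeps $F_i$ unchanged up to isotopy. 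Applying this identification to the stabilizations and ambient isotopies witnessing the stable equivalence of $L_1$ and $L_2$, I may assume $L_1 = L_2 =: L = H_V \sqcup H_W$ as a single handlebody-link in $S^3$, while $F_1$ and $F_2$ remain in their original isotopy classes.

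Next, both $F_1$ and $F_2$ lie in $E(L) := S^3 \setminus \text{int}(L)$ and yield Heegaard-type decompositions $E(L) = C_{V_i} \cup_{F_i} C_{W_i}$ with $\partial_+ C_{V_i} = \partial H_V$, $\partial_+ C_{W_i} = \partial H_W$, and $\partial_- C_{V_i} = \partial_- C_{W_i} = F_i$. Since the $\partial_-$ surface of a compression body is incompressible in it, and any compressing disk for $F_i$ in $E(L)$ would lie entirely on one side of $F_i$, I conclude that $F_i$ is incompressible in $E(L)$. After arranging, by a preparatory reduction if necessary, that $L$ is non-split, so that $E(L)$ is irreducible (using Lemma \ref{irreducible}), I will verify that $C_{V_i}$ is an incompressible neighborhood of $\partial H_V$ in $E(L)$ in the sense of Definition \ref{incomp}: condition (i) is immediate, condition (ii) follows from the incompressibility of $F_i$ just established, and condition (iii) holds because $C_{V_i}$ deformation retracts onto $F_i$ together with arcs dual to its $1$-handles, which can be further homotoped onto $\partial H_V$.

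By Theorem \ref{mac}, the incompressible neighborhood of $\partial H_V$ in $E(L)$ is unique up to isotopy of $E(L)$, so there is an ambient isotopy of $E(L)$ (fixing $\partial E(L)$) sending $C_{V_1}$ onto $C_{V_2}$ and consequently $F_1 = \partial_- C_{V_1}$ to $F_2 = \partial_- C_{V_2}$; extending by the identity across $L$ gives the desired isotopy of $S^3$. The main obstacle I anticipate is ensuring irreducibility of $E(L)$: when $L$ is split (as happens, for example, when one of $V_i$ or $W_i$ is a handlebody), Theorem \ref{mac} does not apply directly, and a separate argument or preparatory simplification of the handlebody-link is needed before the incompressible-neighborhood machinery can be invoked.
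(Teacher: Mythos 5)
Your overall route is the same as the paper's (reduce stable equivalence to equality of the links by realizing each handlebody-link stabilization as a Heegaard-splitting stabilization inside $V_i$ or $W_i$, then recover $F_i$ as $\partial_{-}$ of an incompressible neighborhood, unique by Theorem \ref{mac}), but there is a genuine gap in your unconditional appeal to Theorem \ref{mac}. That theorem, and Definition \ref{incomp} itself, presuppose that the boundary component in question is a \emph{compressible} boundary component of $E(L)$. The surface $\partial H_{V}$ is compressible in $E(L)$ exactly when the compression body on that side admits $1$-handles (a compressing disk for $\partial H_V$ can be isotoped off the incompressible $F_i$ into $C_{V_i}$, and $\partial_{+}$ of a product compression body is incompressible in it). So when $C_{V_i}$ and $C_{W_i}$ are products --- which happens precisely in the degenerate but important situations, e.g.\ $F$ an unknotted surface with both splittings trivial, or $V_F$ a handlebody with its trivial splitting --- your main step is simply inapplicable on that side. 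This is why the paper's proof runs a case analysis on which of $C_{V_{1}}$, $C_{W_{1}}$ admit $1$-handles: in the mixed case it uses the triviality of Heegaard splittings of a handlebody to identify $C_{W_{1}}$ with $\partial H_{W_{1}}\times[0,1]$, and in the case where neither admits a $1$-handle it invokes Waldhausen's theorem (Theorem \ref{walthm}) on Heegaard splittings of $S^3$. Your proposal contains no substitute for these cases; to close the gap you must either add them or argue directly that when all the compression bodies are products, $F_1$ and $F_2$ are both boundary-parallel push-offs of $\partial H_{V}$ in $E(L)\cong F\times[0,1]$.

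By contrast, the obstacle you \emph{do} flag --- splitness of $L$ --- is spurious, and your proposed example of it is wrong. By your own second paragraph, $F_i$ is incompressible in $E(L)=C_{V_i}\cup_{F_i}C_{W_i}$, and compression bodies are irreducible; Lemma \ref{irreducible} then gives irreducibility of $E(L)$ outright, with no non-splitness hypothesis needed (you have the logical direction reversed: irreducibility of $E(L)$ \emph{implies} $L$ is non-split, since a splitting sphere would bound no ball in $E(L)$). In particular, if one of $V_i$ or $W_i$ is a handlebody, the associated link is still non-split: for the trivial splitting, $H_{W_i}$ is a pushed-in parallel copy of $W_i$, and, e.g., for the genus-one Heegaard torus the two handlebodies form a Hopf-linked pair of solid tori. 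So no ``preparatory reduction'' exists or is needed, and that paragraph should be deleted. One last small point: the isotopy of $E(L)$ furnished by Theorem \ref{mac} is not asserted to fix $\partial E(L)$, so you cannot extend it ``by the identity across $L$''; as in the paper, you should instead extend the resulting ambient isotopy of $F_1$ to $S^3$ via the isotopy extension theorem.
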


\begin{proof}
We assume that $H_{V_{1}}$ is mapped to $H_{V_{2}}$ and $H_{W_{1}}$ is mapped to $H_{W_{2}}$ by an isotopy of $S^3$.
We denote by $E(L_{1})$ the exterior of $L_{1}$.
Then, by Lemma \ref{irreducible}, we can show that $E(L_{1})$ is irreducible.
If both $C_{V_{1}}$ and $C_{W_{1}}$ admit $1$-handles, that is, $C_{V_{1}}$ and $C_{W_{1}}$ are constructed from the product manifolds $F_{1} \times [0,1]$ by attaching $1$-handles, then we can show that $\partial H_{V_{1}}$ and $\partial H_{W_{1}}$ are compressible in $E(L_{1})$.
Then, we can assume that $C_{V_{1}}$ and $C_{W_{1}}$ are incompressible neighborhoods(see Definition \ref{incomp}) of $\partial H_{V_{1}}$ and $\partial H_{W_{1}}$.
By Theorem \ref{mac}, $C_{V_{1}}$ and $C_{W_{1}}$ are unique up to ambient isotopy of $E(L_{1})$.
Therefore, by the use of isotopy extension theorem, we can show that $F_{1}$ and $F_{2}$ are isotopic.
If exactly one of $C_{V_{1}}$ or $C_{W_{1}}$ admits $1$-handles, let us suppose that $C_{V_{1}}$ admits $1$-handles.
Then, by using similar argument, we can show that $C_{V_{1}}$ is an incompressible neighborhood of $\partial H_{V_{1}}$ and unique.
On the other hand, since $C_{W_{1}}$ does not admit any $1$-handle, then by the triviality of a Heegaard splitting of a handlebody, $C_{W_{1}}$ is in the form $\partial H_{W_{1}}\times[0, 1]$.
Then we have the equivalence of $F_1$ and $F_2$.
If neither  $C_{V_{1}}$ nor $C_{W_{1}}$ admits any $1$-handle, then by using Waldhausen's theorem (Theorem \ref{walthm}), we can show that $F_{1}$ and $F_{2}$ are isotopic.
\end{proof}

Proposition \ref{injectibity} implies that an isotopy class of a surface in $S^3$ is uniquely determined from the stably equivalence class of an associated $2$-component handlebody-link of the surface.

\subsection{Results from geometric viewpoints}
We study surfaces in $S^3$ from geometric viewpoints, and construct a geometric invariant of surfaces.
\begin{defi}[Handle attaching \cite{suz2}]\label{handleattaching}
Let $F$ be a closed connected orientable surface embedded in $S^3$.
Let $h_{i}:D^{1}\times D^{2}\to S^3$ ($i=1, 2,  \ldots, n$) be embeddings of the $1$-handle such that $h_{i}(D^{1}\times D^{2})\cap F=h_{i}(\partial D^{1}\times D^{2})$ and $h_{i}(D^{1}\times D^{2})\cap h_{j}(D^{1}\times D^{2})=\emptyset$ (whenever $i\neq j$).
We call $h_{i}(D^{1}\times\partial D^{2})$ a {\it handle}.
We denote by $F(h_{1}, h_{2} \ldots, h_{n}):=F\cup(\sqcup_{i=1}^{n}h_{i}(D^{1}\times\partial D^{2}))\setminus\sqcup_{1=1}^{n}h_{i}(\partial D^{1}\times\text{int}(D^{2}))$ the surface obtained from $F$ by attaching $n$ handles.
Note that $F(h_{1}, h_{2} \ldots, h_{n})$ is again a closed connected orientable surface embedded in $S^3$ and that its genus is equal to $g(F)+n$.
\end{defi}

Closed connected orientable surfaces $F_{1}, F_{2}, \ldots F_{n}$ in $S^3$ are said to be {\it separated} if there exists mutually disjoint 3-balls $B^{3}_{i}$ such that $F_{i}\subset {\rm int}(B^{3}_{i})$ for each $i=1, 2, \ldots, n$.
If two closed connected orientable surfaces are separated by a $3$-ball $B$ and $S^{3} \setminus \text{int}(N(B))$, we call $B$ the \emph{associated ball}.

Given two separated embedded surfaces in $S^3$, we can construct a new embedded surface by using the following way, which is called the \emph{isotopy sum}.

\begin{defi}[Isotopy sum \cite{tsu1}]
Let $F_{1}$ and $F_{2}$ be closed connected orientable separated surfaces embedded in $S^3$ and $B^{3}$ be the associated ball as above, respectively.
Let $\varphi:D^{1}\times D^{2}\to S^{3}$ be an embedding of the 1-handle such that $\varphi(D^{1}\times D^{2})\cap F_{1}=\varphi(\{-1\}\times D^{2})$, $\varphi(D^{1}\times D^{2})\cap F_{2}=\varphi(\{1\}\times D^{2})$, and $\varphi(D^{1}\times D^{2})\cap\partial B^{3}=\varphi(\{0\}\times D^{2})$.
We define the {\it isotopy sum} of $F_{1}$ and $F_2$, denoted by $F_{1}\sharp_{\text{iso}}F_{2}$, by $F_{1}\sharp_{\text{iso}}F_{2}:=F_{1}\cup F_{2}\cup\varphi(D^{1}\times\partial D^{2})\setminus \varphi(\partial D^{1}\times\text{int}(D^{2}))$.
\end{defi}

We note that the isotopy sum of two separated embedded surfaces does not depend on the choice of the associated ball and the choice of an embedding $\varphi$ up to an isotopy of $S^3$.
Moreover, it does not depend on the order of $F_1$ and $F_2$.
We also note that $(F_{1}\sharp_{\rm iso}F_{2})\sharp_{\rm iso}F_{3}\cong F_{1}\sharp_{\rm iso}(F_{2}\sharp_{\rm iso}F_{3})$ for separated closed connected orientable surfaces $F_{1}, F_{2}$ and $F_{3}$ in $S^3$.
In this sense the isotopy sum is well-defined.
For an embedded surface $F$, if $F$ is isotopic to the isotopy sum $F_1 \sharp_{\rm iso} F_2 \sharp_{\rm iso} \cdots \sharp_{\rm iso} F_k$ for separated surfaces $F_1, F_2, \ldots, F_k$, we call it a \emph{decomposition of $F$ into factors} $F_i$, $i = 1, 2,\ldots, k$.

In order to construct a geometric invariant of closed connected orientable surfaces in $S^3$,  we introduce the notion of the {\it tunnel number} of a compact connected orientable $3$-manifold with connected boundary embedded in $S^3$.
\begin{defi}[Tunnel number]\label{tunnel}
Let $V$ be a compact connected orientable $3$-manifold with connected boundary embedded in $S^3$.
The {\it tunnel number} of $V$ is the minimal number $n$ of mutually disjoint 1-handles such that 
$\text{cl}(S^{3}\setminus(V\bigcup\sqcup_{i=1}^{n}h_{i}(D^{1}\times D^{2})))$ is homeomorphic to a handlebody and $V\cap h_{i}(D^{1}\times D^{2})=h_{i}(\partial D^{1}\times D^{2})$, where $h_i$ $(i=1, 2, \ldots n)$ are embeddings of the $1$-handle $D^{1}\times D^{2}$ into $S^3$.
We denote by $t(V)$ the tunnel number of $V$.
\end{defi}

It is not hard to see that since every compact connected orientable 3-manifold possibly with boundary admits a Heegaard splitting, $t(V)$ is always finite.
Let $F$ be a closed connected orientable surface in $S^3$.
We denote by $V_{F}$ and $W_{F}$ the closures of the connected components of $S^{3}\setminus F$.
By Definition \ref{tunnel}, we also see that the Heegaard genus of $W_F$ is given by $g(F)+t(V_{F})$.
Similarly, the Heegaard genus of $V_F$ is given by $g(F)+t(W_{F})$.

We note that if $V_{F}$ is a handlebody-knot, then the tunnel number of $V_{F}$ coincides with that of a handlebody-knot \cite{ishii2}.

\begin{prop}\label{prop1}
Let $F$ be a closed connected orientable surface in $S^3$.
Let $h_{i}:D^{1}\times D^{2}\to S^3$ $(i=1, 2,  \ldots, n)$ be embeddings of the 1-handle such that $h_{i}(D^{1}\times D^{2})\cap F=h_{i}(\partial D^{1}\times D^{2})$ and $h_{i}(D^{1}\times D^{2})\cap h_{j}(D^{1}\times D^{2})=\emptyset$ (whenever $i\neq j$).
Then there exists a finite numbers of handles such that $F(h_{1}, h_{2} \ldots, h_{n})$ is an unknotted surface.
\end{prop}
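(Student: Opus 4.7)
My plan is to produce the desired handles from Heegaard splittings of both $V_F$ and $W_F$ simultaneously. By Moise's theorem, $V_F$ and $W_F$ admit Heegaard splittings $V_F = H_V \cup_{F_V} C_V$ and $W_F = H_W \cup_{F_W} C_W$, where $H_V$, $H_W$ are handlebodies and $C_V$, $C_W$ are compression bodies with $\partial_- C_V = \partial_- C_W = F$. Using the dual description of a compression body, I would view $C_V$ as $F \times [0,1]$ with $p := g(F_V) - g(F)$ 1-handles attached on $F \times \{1\}$, then push the cores of these 1-handles down through the collar to obtain properly embedded arcs $a_1, \ldots, a_p$ in $V_F$ with endpoints on $F$ and interiors in $\mathrm{int}(V_F)$. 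Analogously, I would extract arcs $b_1, \ldots, b_q$ in $W_F$ with $q := g(F_W) - g(F)$. A general position argument then guarantees that these arcs can be taken mutually disjoint with distinct endpoints on $F$, so that the 1-handles $h_i := N(a_i) \subset V_F$ and $k_j := N(b_j) \subset W_F$ satisfy the hypotheses of Definition \ref{handleattaching}.

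I claim that $F' := F(h_1, \ldots, h_p, k_1, \ldots, k_q)$ is unknotted. Write $V'$, $W'$ for the closures of the components of $S^3 \setminus F'$. By the tubing convention, the handle $h_i$ (whose interior lies in $V_F$) ends up on the $W'$-side while $k_j$ (whose interior lies in $W_F$) ends up on the $V'$-side, so $V' = (V_F \setminus \bigsqcup_i \mathrm{int}(h_i)) \cup \bigsqcup_j k_j$. By construction, drilling the pushed 1-handles $h_i$ from $V_F$ peels off every 1-handle of the compression body $C_V$ and leaves $H_V$ together with a residual collar of $F$; absorbing this collar will yield a homeomorphism $V_F \setminus \bigsqcup_i \mathrm{int}(h_i) \cong H_V$. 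Under this identification, the $k_j$ become 1-handles attached to $H_V$ along disks on $\partial H_V = F_V$, so $V'$ is obtained from the handlebody $H_V$ by attaching $q$ extra 1-handles and is itself a handlebody. The symmetric argument will give $W' \cong H_W \cup \bigsqcup_i h_i$, which is also a handlebody, so $F'$ is unknotted. The total number of attached handles is $p + q = g(F_V) + g(F_W) - 2g(F)$, which is finite.

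The main technical obstacle will be the careful verification of the homeomorphism $V_F \setminus \bigsqcup_i \mathrm{int}(h_i) \cong H_V$, which requires tracking how the pushed cores $a_i$ sit inside the compression body $C_V$ and confirming that removing their tubular neighborhoods simultaneously undoes every 1-handle of the compression body construction, leaving only the absorbable collar $(F \setminus \bigsqcup_i D_i) \times [0,1]$, where $D_i \subset F$ are the attaching end disks of $h_i$. Once this structural identification is in place, the remaining step --- that attaching 1-handles to a handlebody along boundary disks yields a handlebody --- is standard from the handle decomposition of handlebodies, and the symmetric claim for $W'$ follows by interchanging the roles of $V_F$ and $W_F$.
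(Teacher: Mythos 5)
Your proof is correct, and it takes a noticeably different route from the paper's, essentially unpacking what the paper compresses into two sentences. The paper's proof invokes the tunnel numbers $t(V_F)$ and $t(W_F)$: attaching $t(V_F)+t(W_F)$ $1$-handles to $F$ produces two handlebody-knots $H_1$ and $H_2$ with $H_1\cup H_2=S^3$ and $\partial H_1=\partial H_2$, so the new surface is a Heegaard surface of $S^3$, and the paper then cites Waldhausen's theorem (Theorem \ref{walthm}) to conclude unknottedness; the finiteness of the tunnel numbers is in turn justified earlier by the existence of Heegaard splittings. You instead go straight to the source: you take Heegaard splittings of both $V_F$ and $W_F$, realize the tunnels explicitly as the dual $1$-handles of the compression bodies $C_V$ and $C_W$ pushed down through the collar, and verify by hand the cross-absorption phenomenon (handles drilled out of $V_F$ become part of $W'$, and vice versa) that the paper leaves implicit in the assertion $H_1\cup H_2=S^3$. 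Your concluding step is also leaner: since Definition \ref{surfaces} defines an unknotted surface as one whose complementary components are both homeomorphic to handlebodies, your direct verification that $V'\cong H_V\cup\{q\text{ $1$-handles}\}$ and $W'\cong H_W\cup\{p\text{ $1$-handles}\}$ are handlebodies finishes the proof without Waldhausen's theorem at all (the paper's appeal to it is, strictly speaking, unnecessary for this definition of unknotted). As a bonus, your argument yields the explicit count of $g(F_V)+g(F_W)-2g(F)$ handles, which connects cleanly with the paper's later observation that the Heegaard genus of $W_F$ equals $g(F)+t(V_F)$; the one point you flag as remaining --- that removing the pushed cores leaves $H_V$ plus an absorbable collar $(F\setminus\sqcup_i D_i)\times[0,1]$ --- is a standard collar-absorption argument and is sketched correctly.
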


\begin{proof}
Let $V_F$ and $W_F$ be the closures of the connected components of $S^{3}\setminus F$.
Then, we have $S^3=V_{F}\cup W_{F}$.
Let $t(V_{F})$ and $t(W_{F})$ be the tunnel numbers of $V_F$ and $W_F$.
Then, by the definition of the tunnel number, we obtain two handlebody-knots $H_{1}$ and $H_{2}$ such that $H_{1}\cup H_{2}=S^{3}$ and $\partial H_{1}=\partial H_{2}$ by attaching $t(V_{F})+t(W_{F})$ $1$-handles to the common boundary $F=V_{F}\cap W_{F}$ of $V_{F}$ and $W_F$.
Since the union of $H_1$ and $H_2$ along their boundaries gives a Heegaard splitting of $S^3$.
Then, combining Waldhausen's theorem(Theorem \ref{walthm}), the common boundary of $H_1$ and $H_2$ is an unknotted surface.
\end{proof}

We introduce the definition of the handle number of embedded surface.

\begin{defi}[Handle number]
Let $F$ be a closed connected orientable surface in $S^3$.
Let $h_{i}:D^{1}\times D^{2}\to S^3$ ($i=1, 2,  \ldots, n$) be embeddings of the 1-handle such that $h_{i}(D^{1}\times D^{2})\cap F=h_{i}(\partial D^{1}\times D^{2})$ and $h_{i}(D^{1}\times D^{2})\cap h_{j}(D^{1}\times D^{2})=\emptyset$ (whenever $i\neq j$).
Let $F(h_{1}, h_{2}, \ldots, h_{n})$ be a surface obtained from $F$ by attaching $n$ handles to $F$.
The {\it handle number} of $F$, denoted by $h(F)$, is the minimal number $n$ of handles such that $F(h_{1}, h_{2}, \ldots, h_{n})$ is an unknotted surface.
\end{defi}

Let $F$ be a closed connected orientable surface in $S^3$.
We denote by $V_F$ and $W_F$ the closures of the connected components of $S^{3}\setminus F$.
We show the relationship between the handle number of $F$ and the tunnel numbers of $V_{F}$ and $W_{F}$.

\begin{prop}\label{theo1}
Let $F$ be a closed connected orientable surface in $S^3$.
We denote by $V_F$ and $W_F$ the closures of the connected components of $S^{3}\setminus F$.
Let $t(V_{F})$ and $t(W_{F})$ be the tunnel numbers of $V_{F}$ and $W_{F}$.
Then, for the handle number of $F$,  we have $h(F)=t(V_{F})+t(W_{F})$.
\end{prop}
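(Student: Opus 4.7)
The plan is to prove the equality by establishing the two inequalities $h(F) \leq t(V_F) + t(W_F)$ and $h(F) \geq t(V_F) + t(W_F)$ separately.

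The upper bound is a quantitative refinement of Proposition \ref{prop1}. Choose mutually disjoint tunnel systems realizing both tunnel numbers: $t(V_F)$ handles attached to $F$ with interior in $W_F$ (tunnels for $V_F$) together with $t(W_F)$ handles with interior in $V_F$ (tunnels for $W_F$). Let $F'$ be the surface obtained from $F$ by attaching all $t(V_F) + t(W_F)$ of these handles. Tracing through Definition \ref{handleattaching}, the complementary region of $F'$ on the $V_F$-side equals $V_F$ drilled along the tunnels for $W_F$ (a handlebody by the definition of $t(W_F)$) with the tunnels for $V_F$ attached as $1$-handles, which is again a handlebody. By symmetry the $W_F$-side complementary region is also a handlebody, so $F'$ is unknotted and $h(F) \leq t(V_F) + t(W_F)$.

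For the lower bound, suppose $h(F) = n$ is realized by handles $h_1, \ldots, h_n$ such that $F' := F(h_1, \ldots, h_n)$ is unknotted. Write $n = n_V + n_W$, where $n_V$ (respectively $n_W$) counts the handles whose interior lies in $V_F$ (respectively $W_F$). The key claim is $n_V \geq t(W_F)$; the symmetric argument gives $n_W \geq t(V_F)$, and summing yields $n \geq t(V_F) + t(W_F)$.

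To prove $n_V \geq t(W_F)$, we verify that the $n_V$ handles with interior in $V_F$ form a valid tunnel system for $W_F$; equivalently, that the $3$-manifold $N := {\rm cl}(V_F \setminus \sqcup_{i=1}^{n_V} h_i(D^1 \times D^2))$ is a handlebody. Let $V'$ be the closure of the component of $S^3 \setminus F'$ containing $N$. Tracing how each handle attachment modifies the two regions of $S^3 \setminus F$, the handles with interior in $W_F$ end up on the same side of $F'$ as $N$, so $V' = N \cup H$, where $H$ denotes the disjoint union of the $n_W$ handles with interior in $W_F$, glued to $N$ along the $W_F$-side handle attaching disks. Since $F'$ is unknotted, $V'$ is a handlebody, and $N$ is recovered from $V'$ by cutting along the $n_W$ co-core disks of these attached $1$-handles. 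The main technical point is to deduce that $N$ itself is a handlebody: we invoke the standard fact that cutting a handlebody along any collection of properly embedded disks yields a disjoint union of handlebodies, together with the observation that $N$ is connected (since $V_F$ is connected and drilling neighborhoods of properly embedded arcs does not disconnect a $3$-manifold). Hence $N$ is a single handlebody, so $t(W_F) \leq n_V$ by the definition of tunnel number, completing the proof.
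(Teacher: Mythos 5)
Your proposal is correct and takes essentially the same route as the paper's proof: the upper bound $h(F)\le t(V_{F})+t(W_{F})$ via simultaneously attaching disjoint tunnel systems for both sides (the construction of Proposition \ref{prop1}), and the lower bound by partitioning a minimal unknotting handle collection according to which side of $F$ contains each handle and recognizing each part as a tunnel system for the opposite complementary region. The only difference is one of detail: where the paper asserts $t(V_{F})\le k$ and $t(W_{F})\le l$ ``by the definition of the tunnel number,'' you explicitly verify this by cutting the handlebody bounded by the unknotted surface along the co-core disks of the opposite-side handles and invoking the standard disk-cutting fact together with connectedness, thereby filling in a step the paper leaves implicit.
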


\begin{proof}
Let $n$ be the handle number of $F$.
Let $h_{i}:D^{1}\times D^{2}\to S^{3}\ (i=1, 2, \ldots, n)$ and $\{h_{1}, h_{2}, \ldots, h_{n}\}$ be embeddings of the $1$-handle satisfying the conditions given in Definition \ref{handleattaching} and the set of attached handles to $F$ such that $F(h_{1}, h_{2}, \ldots, h_{n})$ is an unknotted surface, respectively.
We divide the set of handles $\{h_{1}, h_{2}, \ldots, h_{n}\}$ into the disjoint union of the sets of handles $\{h_{1}^{V}, h_{2}^{V}, \ldots, h_{k}^{V}\}$ and $\{h_{1}^{W}, h_{2}^{W, }\ldots, h_{l}^{W}\}$, where $h_{i}^{V}(D^{1}\times D^{2})$ is attached to $V_{F}$, and $h_{j}^{W}(D^{1}\times D^{2})$ is attached to $W_F$.
Then, by Proposition \ref{prop1}, we have $n=k+l\leq t(V_{F})+t(W_{F})$.
On the other hand, by the definition of the tunnel number, we have $t(V_{F})\leq k$ and $t(W_{F})\leq l$.
Then, we obtain $t(V_{F})+t(W_{F})\leq k+l$.
Therefore, we have $h(F)=k+l=t(V_{F})+t(W_{F})$.
\end{proof}

As an immediate sequence, we have that the unordered pair $(t(V_{F}),t(W_{F}))$ of non-negative integers is an invariant of $F$.
Furthermore, it is not hard to see the following.
Let $V_{F}=H_{V}\cup C_{V}$ and $W_{F}=H_{W}\cup C_{W}$ be minimal genus Heegaard splittings of $V_{F}$ and $W_{F}$ consisting of pairs of a handlebody $H_{V}$ and a compression body $C_{V}$ and a handlebody $H_{W}$ and a compression body $C_{W}$, respectively.
If $F$ is an unknotted surface, then both $H_{V}$ and $H_{W}$ are trivial handlebody-knots.
Hence $t(V_{F})=0$ and $t(W_{F})=0$.
If $F$ is a knotted surface, then exactly one of $V_{F}$ or $W_{F}$ is homeomorphic to a handlebody, and the other is not homeomorphic to a handlebody.
Hence exactly one of $t(V_{F})$ or $t(W_{F})$ is equal to zero, and the other is a positive integer.
If $F$ is a bi-knotted surface, then 
we have that neither $V_{F}$ nor $W_{F}$ is homeomorphic to a handlebody.
Hence both $t(V_{F})$ and $t(W_{F})$ are positive integers.

It is not hard to see that since the unordered pair $(t(V_{F}),t(W_{F}))$ of non-negative integers is an invariant of $F$,  then the handle number $h(F)$ is also an invariant of $F$.
We prove the following corollary.

\begin{coro}\label{cor1}
Let $F$ be a closed connected orientable surface in $S^3$.
We denote by $V_{F}$ and $W_{F}$ the closures of the connected components of $S^{3}\setminus F$.
Let $n$ be the handle number of $F$.
Let $h_{i}:D^{1}\times D^{2}\to S^{3}\ (i=1, 2, \ldots, n)$ be embeddings of the $1$-handle satisfying the conditions given in Definition \ref{handleattaching}.
Let $\{h_{1}, h_{2}, \ldots, h_{n}\}=\{h_{1}^{V}, h_{2}^{V}, \ldots, h_{k}^{V}\}\sqcup\{h_{1}^{W}, h_{2}^{W}, \ldots, h_{l}^{W}\}$ be the set of attached handles, where $h_{i}^{V}(D^{1}\times D^{2})$ is attached to $V_{F}$, and $h_{j}^{W}(D^{1}\times D^{2})$ is attached to $W_F$.
Let $t(V_{F})$ and $W_{F}$ be the tunnel numbers of $V_F$ and $W_F$.
Then, $k=t(V_{F})$ and $l=t(W_{F})$.
\end{coro}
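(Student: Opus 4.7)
The plan is to show that $t(V_F) \le k$ and $t(W_F) \le l$, and then use Proposition \ref{theo1} to force each of these inequalities to be an equality. By hypothesis $n = h(F)$ and by construction $n = k + l$, while Proposition \ref{theo1} gives $h(F) = t(V_F) + t(W_F)$, so $k + l = t(V_F) + t(W_F)$. Consequently, once both individual inequalities are in hand, adding them yields $t(V_F) + t(W_F) \le k + l = t(V_F) + t(W_F)$, forcing equality throughout, whence $k = t(V_F)$ and $l = t(W_F)$.

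It remains to verify $t(V_F) \le k$ (the other inequality being symmetric). The $k$ handles $h_1^V, \ldots, h_k^V$ have attaching disks on $F = \partial V_F$ and otherwise lie in $W_F$, which is exactly the setup of Definition \ref{tunnel}. Hence I need only check that the closure $\mathrm{cl}(S^3 \setminus (V_F \cup \bigsqcup_{i=1}^{k} h_i^V(D^1 \times D^2)))$, which coincides with $W_F$ with the $h_i^V$'s drilled out, is a handlebody. For this I will invoke the unknotted surface $F(h_1,\ldots,h_n)$: its $W_F$-side, call it $W^*$, is a handlebody, and a direct analysis of the surgeries shows $W^* = (W_F \setminus \bigsqcup_{i} h_i^V) \cup \bigsqcup_{j} h_j^W$, where the $l$ handles $h_j^W$ — whose interiors originally lay in $V_F$ — attach to $W_F \setminus \bigsqcup_{i} h_i^V$ as genuine 1-handles along their two cap disks on $F$. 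Removing these $l$ attached 1-handles one at a time from the handlebody $W^*$ produces a handlebody of smaller genus, namely $W_F \setminus \bigsqcup_{i} h_i^V$, which is exactly what Definition \ref{tunnel} requires; therefore $t(V_F) \le k$.

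The main obstacle will be the surgery bookkeeping in the middle step: confirming that, after doing all $n$ surgeries, the handles $h_j^W$ really do sit inside $W^*$ as honest, pairwise disjoint attached 1-handles along their cap disks, so that they can be peeled off while preserving the handlebody property at each stage. This is essentially the same geometric verification already used implicitly in the proof of Proposition \ref{theo1}, so there is no genuinely new topological input required. Once that picture is settled, the conclusion $k = t(V_F)$ and $l = t(W_F)$ follows immediately from the pigeonhole between $k + l = t(V_F) + t(W_F)$ and the two upper bounds.
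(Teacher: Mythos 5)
Your proposal is correct and takes essentially the same approach as the paper: its proof of Corollary \ref{cor1} likewise combines the inequalities $t(V_{F})\leq k$ and $t(W_{F})\leq l$ with the identity $k+l=t(V_{F})+t(W_{F})$ from Proposition \ref{theo1}, and concludes by the same pigeonhole (stated there as a short contradiction from assuming $t(V_{F})<k$). The only difference is one of detail: you spell out the verification of $t(V_{F})\leq k$ by peeling the $l$ handles $h_{j}^{W}$ off the handlebody on the $W$-side of the unknotted surface (i.e., cutting along their cocore disks), a step the paper simply asserts ``by the definition of the tunnel number'' inside the proof of Proposition \ref{theo1}.
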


\begin{proof}
In the proof of Theorem \ref{theo1}, we saw that $k+l=t(V_{F})+t(W_{F})$, $t(V_{F})\leq k$, and $t(W_{F})\leq l$.
Let us assume that $t(V_{F})<k$.
Then, we have $t(W_{F})>l$.
This is a contradiction.
Hence $k=t(V_{F})$.
The same holds for $l$, that is, $l=t(W_{F})$.
\end{proof}

\begin{prop}\label{constant}
Let $F$ be a closed connected orientable surface in $S^3$.
We denote by $V_F$ and $W_F$ the closures of the connected components of $S^{3}\setminus F$.
For any minimal genus Heegaard splitting of $V_{F}$, say $V_{F}=H_{V}\cup C_{V}$, the tunnel number $t(H_{V})$ of the handlebody-knot $H_V$ does not depend on the choice of minimal genus Heegaard splittings.
The same holds for any minimal genus Heegaard splitting of $W_F$.
\end{prop}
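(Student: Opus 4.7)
The plan is to combine the Reidemeister--Singer theorem with a stability lemma asserting that stabilization of a handlebody-knot preserves its tunnel number. Let $V_F = H_V \cup C_V$ and $V_F = H'_V \cup C'_V$ denote two minimal genus Heegaard splittings of $V_F$; both have Heegaard surface of genus $g_V$, and the associated handlebody-knots $H_V, H'_V \subset S^3$ have genus $g_V$. The aim is to show $t(H_V) = t(H'_V)$.

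First I would apply Theorem~\ref{thm:1} to obtain a non-negative integer $m$ such that $m$ stabilizations of each of the two splittings produce equivalent Heegaard splittings of $V_F$; the common number $m$ is forced because stabilization increments the genus by one and equivalence requires equal genus. Next I would verify that a single stabilization of a Heegaard splitting in the sense of Definition~\ref{def:sbz} corresponds to a stabilization $H_V \mapsto H_V \natural T$ of the handlebody-knot in the sense of Definition~\ref{hls}: the parallel arc $\alpha$ in $C_V$ has a $3$-ball neighborhood whose addition to $H_V$ realizes the disk sum with a solid torus separated from $H_V$ by a $3$-ball. Moreover, an equivalence of Heegaard splittings of $V_F$ is realized by an ambient isotopy of $V_F$; restricting to $F = \partial V_F$ and extending through a collar of $F \subset W_F$ produces an ambient isotopy of $S^3$. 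Hence the $m$-fold stabilized handlebody-knots $H_V^{(m)}$ and $(H'_V)^{(m)}$ are equivalent handlebody-knots in $S^3$, so $t(H_V^{(m)}) = t((H'_V)^{(m)})$.

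The heart of the argument is the lemma $t(H \natural T) = t(H)$ for every handlebody-knot $H$. My key observation would be that for any tunnel system $\rho$ for $H$ or for $H \natural T$, arranged disjoint from a $3$-ball $B \subset E(H)$ containing the stabilizing solid torus $T$ and its attaching tube, there is a homeomorphism
\[
E((H \natural T) \cup \rho) \;\cong\; E(H \cup \rho) \mathbin{\natural} S,
\]
where $S := B \setminus (\mathrm{tube} \cup T)$ is a solid torus glued to $E(H \cup \rho) \setminus B \cong E(H \cup \rho)$ along a disk on $\partial B$. For $t(H \natural T) \le t(H)$, I take $\rho$ a minimal tunnel system for $H$: since $E(H \cup \rho)$ is a handlebody and $S$ is a handlebody, the displayed boundary connect sum is a handlebody, so $\rho$ is a tunnel system for $H \natural T$. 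For $t(H) \le t(H \natural T)$, I take $\rho$ a minimal tunnel system for $H \natural T$: the displayed identification shows $E(H \cup \rho) \natural S$ is a handlebody, and since the boundary-connect-sum disk is a compressing disk whose cut decomposes the handlebody into two handlebodies of complementary genus, $E(H \cup \rho)$ is itself a handlebody, so $\rho$ is a tunnel system for $H$. Matching Heegaard genera gives the equality of tunnel numbers.

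Iterating the lemma then yields $t(H_V) = t(H_V^{(m)}) = t((H'_V)^{(m)}) = t(H'_V)$, which is the desired conclusion. The main obstacle I anticipate is the careful execution of the geometric picture $E((H \natural T) \cup \rho) \cong E(H \cup \rho) \natural S$: one must position the tube, the stabilizing solid torus $T$, and the tunnels $\rho$ so that $T$ together with its tube lies in a $3$-ball $B \subset E(H \cup \rho)$ meeting $\partial H$ in a single disk, and then identify $B \setminus (\mathrm{tube} \cup T)$ as a solid torus. A secondary point to verify is the precise matching between the Heegaard-splitting stabilization of Definition~\ref{def:sbz} and the handlebody-knot stabilization of Definition~\ref{hls}, including the separation-by-a-$3$-ball condition.
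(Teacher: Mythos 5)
Your route is genuinely different from the paper's: the paper never stabilizes, but instead shows that for any minimal genus splitting the handlebody $H_V$ satisfies $t(H_V)=k$, where $k$ is the number of handles attached on the $V_F$-side in a minimal handle decomposition realizing $h(F)$; since $k=t(V_F)$ by Corollary~\ref{cor1}, the independence is immediate. Your plan --- Reidemeister--Singer plus a lemma that handlebody-knot stabilization preserves tunnel number --- is a legitimate alternative, and its first half is sound: the equal stabilization count $m$ is forced by equality of the minimal genera; a Heegaard-splitting stabilization does realize $H_V\mapsto H_V\natural T$ with the separation condition of Definition~\ref{hls} (the parallelism disk of the arc $\alpha$ supplies the $3$-ball); and an ambient isotopy of $V_F$ extends to one of $S^3$ through a collar of $F$, so the stabilized handlebody-knots are equivalent and have equal tunnel numbers.

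The genuine gap is in the direction $t(H)\le t(H\natural T)$ of your key lemma. You take a minimal tunnel system $\rho$ for $H\natural T$ ``arranged disjoint from'' the ball $B$ containing $T$ and its tube, but no argument is offered that such an arrangement exists, and none is routine: a tunnel of $H\natural T$ may have a foot on the $\partial T$-portion of the boundary, or may run through the complementary solid torus $S=\mathrm{cl}(B\setminus(T\cup\mathrm{tube}))$ essentially, and any isotopy pushing it off $B$ must preserve the property that the exterior of $(H\natural T)\cup\rho$ is a handlebody --- general position gives nothing here. This is precisely the kind of repositioning that fails in the analogous situation for knots, where tunnel number is not superadditive under connected sum, so the step cannot be waved through as ``careful execution.'' The standard repair: your homeomorphism with $\rho=\emptyset$ gives $E(H\natural T)\cong E(H)\natural S$ with $S$ a solid torus; translate tunnel number into Heegaard genus (the Heegaard genus of $E(H)$, for splittings with $\partial_-$ equal to $\partial E(H)$, is $g(\partial H)+t(H)$); then invoke Haken's lemma in the Casson--Gordon form for the essential separating disk $\mathrm{cl}(\partial B\setminus D)$, which can be isotoped to meet any Heegaard surface of $E(H)\natural S$ in a single circle, splitting the splitting into splittings of $E(H)$ and of $S$ with genera adding, whence the Heegaard genus of $E(H)\natural S$ is at least that of $E(H)$ plus one, i.e.\ $t(H\natural T)\ge t(H)$. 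With that theorem imported your lemma, and hence your proof, goes through; without it the crucial inequality is unsupported (the opposite direction $t(H\natural T)\le t(H)$ is fine, since there you may perform the stabilization inside a small ball chosen disjoint from a minimal tunnel system for $H$).
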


\begin{proof}
Let $V_{F}=H_{V}\cup C_{V}$ and $V_{F}=H^{'}_{V}\cup C^{'}_{V}$ be minimal genus Heegaard splittings of $V_{F}$.
Let $n=k+l$ be the handle number of the surface $F$, where $k$ and $l$ are the numbers of 1-handles attached to $V_{F}$ and $W_F$.
Let $h_{i}:D^{1}\times D^{2}\to S^{3}\ (i=1, 2, \ldots, n)$ and $\{h_{1}, h_{2}, \ldots, h_{n}\}$ be embeddings of the $1$-handle satisfying the conditions given in Definition \ref{handleattaching} and the set of attached handles to $F$ such that $F(h_{1}, h_{2}, \ldots, h_{n})$ is an unknotted surface, respectively.
Then, we divide the set of handles $\{h_{1}, h_{2}, \ldots, h_{n}\}$ into the disjoint union of two sets $\{h_{1}^{V}, h_{2}^{V}, \ldots, h^{V}_{k}\}$ and $\{h_{1}^{W}, h_{2}^{W}, \ldots, h^{W}_{l}\}$, where $h^{V}_{i}$ is a handle attached to $V_{F}$, and $h^{W}_{j}$ is a handle attached on $W_{F}$.
By the definition of the tunnel number of $W_F$, ${\rm cl}(S^{3}\setminus (W_{F}\bigcup \sqcup_{i=1}^{t(W_{F})}h_{i}(D^{1}\times D_{2})))$ is isotopic to a handlebody.
In particular, since $V_{F}=H_{V}\cup C_{V}$ is a minimal genus Heegaard splitting, we can assume that ${\rm cl}(S^{3}\setminus (W_{F}\bigcup \sqcup_{i=1}^{t(W_{F})}h_{i}(D^{1}\times D^{2})))$ is homeomorphic to $H_{V}$.
Hence, $\partial H_{V}$ is obtained from $F$ by attaching $t(W_{F})$ handles.
Then, by the definition of the handle number of $F$, we can also assume that we obtain an unknotted surface from $\partial H_{V}$ by attaching $k$ handles.
Moreover, by the definition of the tunnel number of $H_V$, we obtain an unknotted surface from $H_V$.
Then,  we have $k=t(H_{V})$.
By using a similar argument, we also have $k=t(H_{V}^{'})$.
The same holds for minimal genus Heegaard splittings of $W_F$.
\end{proof}

Let $F$ be a closed connected orientable surface embedded in $S^3$.
We denote by $V_{F}$ and $W_{F}$ the closures of the connected components of $S^{3}\setminus F$.
By considering Heegaard splittings $V_{F}$ and $W_{F}$, we obtain an associated 2-component handlebody-link $L$ of $F$.
By applying the Reidemeister--Singer theorem, we showed that $L$ is uniquely determined from $F$ up to stabilizations of Heegaard splittings of $V_{F}$ and $W_{F}$.
Conversely, let us consider a 2-component handlebody-link $L^{'}$.
Then, does there always exist a closed connected orientable surface $F^{'}$ embedded in $S^{3}$ such that the connected components of $\partial L^{'}$ are Heegaard surfaces of $V_{F^{'}}$ and $W_{F^{'}}$?
A necessary and sufficient condition was given in Proposition \ref{iff}.
We consider such a problem from the viewpoints of the handle number and the tunnel number.

By Proposition \ref{constant}, we have the following corollary.

\begin{coro}
Let $L=H_{1}\sqcup H_{2}$ be a $2$-component handlebody-link.
Let us assume that $t(H_{1})-t(H_{2})\neq g(\partial H_{2})-g(\partial H_{1})$.
Then, there does not exist a closed connected orientable surface in $S^3$ such that $L$ is obtained from minimal Heegaard splittings of the closures of the exterior of the surface.
\end{coro}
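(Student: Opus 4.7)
The plan is to prove the contrapositive: if such a surface $F$ exists, then necessarily $t(H_{1}) - t(H_{2}) = g(\partial H_{2}) - g(\partial H_{1})$, contradicting the hypothesis. Suppose $L = H_{1} \sqcup H_{2}$ arises from minimal genus Heegaard splittings $V_{F} = H_{V} \cup C_{V}$ and $W_{F} = H_{W} \cup C_{W}$ of the closures of the two components of $S^{3} \setminus F$. After possibly relabeling, I may assume $H_{1} = H_{V}$ and $H_{2} = H_{W}$; this relabeling is harmless because multiplying both sides of the inequality in the statement by $-1$ shows that the hypothesis is symmetric under $1 \leftrightarrow 2$.

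First I would read off the Heegaard genera from the remark immediately following Definition \ref{tunnel}: because the chosen Heegaard splittings of $V_{F}$ and $W_{F}$ are of minimal genus, we have $g(\partial H_{V}) = g(F) + t(W_{F})$ and $g(\partial H_{W}) = g(F) + t(V_{F})$. Subtracting cancels the common term $g(F)$ and yields
\[
g(\partial H_{W}) - g(\partial H_{V}) = t(V_{F}) - t(W_{F}).
\]

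Next I would convert the tunnel numbers of the submanifolds $V_{F}$ and $W_{F}$ into tunnel numbers of the handlebody-knots $H_{V}$ and $H_{W}$. Let $n = k + l$ be the handle number of $F$ with $k$ of the handles attached to $V_{F}$ and $l$ to $W_{F}$. Corollary \ref{cor1} gives $k = t(V_{F})$ and $l = t(W_{F})$, while the argument inside the proof of Proposition \ref{constant} identifies the same $k$ with $t(H_{V})$ for any minimal genus Heegaard splitting, and similarly $l = t(H_{W})$. Hence $t(V_{F}) = t(H_{V}) = t(H_{1})$ and $t(W_{F}) = t(H_{W}) = t(H_{2})$, and substitution into the displayed identity produces $g(\partial H_{2}) - g(\partial H_{1}) = t(H_{1}) - t(H_{2})$, contradicting the assumed inequality.

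The whole argument is essentially bookkeeping, and the only point that needs a little care is the identification $t(V_{F}) = t(H_{V})$: its proof is not in the statement of Proposition \ref{constant} but in the body of the proof, and I would take a moment to explicitly cite the sentence there showing that $k = t(H_{V})$ for a minimal genus Heegaard splitting $V_{F} = H_{V} \cup C_{V}$, so that the passage from the Heegaard-genus equation to the tunnel-number equation is unambiguous.
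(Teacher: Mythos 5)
Your proof is correct and is essentially the argument the paper intends: the corollary is stated there without a written proof, as an immediate consequence of Proposition \ref{constant} together with Corollary \ref{cor1} and the identities $g(\partial H_{V})=g(F)+t(W_{F})$, $g(\partial H_{W})=g(F)+t(V_{F})$ noted after Definition \ref{tunnel}, which is exactly the chain you spell out. Your two added remarks --- that the hypothesis is symmetric under relabeling $1\leftrightarrow 2$, and that the identification $t(V_{F})=t(H_{V})$ lives in the body of the proof of Proposition \ref{constant} (via $k=t(H_{V})$ combined with $k=t(V_{F})$ from Corollary \ref{cor1}) rather than in its statement --- are accurate and make the implicit argument fully explicit.
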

Let $M$ be a compact connected orientable 3-manifold whose boundary is not homeomorphic to a $2$-sphere.
Then, $M$ is said to be {\it $\partial$-irreducible} if for every properly embedded 2-disk $D$ in $M$, $\partial D$ bounds a 2-disk on $\partial M$.
We assume that the boundary of $M$ is connected, and that $M$ is not homeomorphic to a $3$-ball.
Then, $M$ is said to be {\it $\partial$-prime} if for any decomposition $M_{1}\natural M_{2}$, where $M_1$ and $M_2$ are compact connected orientable $3$-manifolds, either $M_1$ or $M_2$ is homeomorphic to a 3-ball, where $M_{1}\natural M_{2}$ is the disk sum, or the boundary connected sum,  of $M_1$ and $M_2$.
Then, the following is known.

\begin{prop}[{\rm \cite{suz1}}\label{propsuzu1}]
Let $M$ be a compact connected orientable $3$-manifold embeddable in $S^3$ with connected boundary with $g(\partial M)\geq2$.
Then, the following conditions are equivalent.
\begin{itemize}
\item[\rm (i)] The $3$-manifold $M$ is $\partial$-prime.
\item[\rm (ii)] The $3$-manifold $M$ is $\partial$-irreducible.
\item[\rm (iii)] The fundamental group $\pi_{1}(M)$ of $M$ is indecomposable with respect to free products.
\end{itemize}
\end{prop}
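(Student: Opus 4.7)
My plan is to establish the cycle (ii) $\Rightarrow$ (i) $\Rightarrow$ (iii) $\Rightarrow$ (ii), combining elementary 3-manifold surgery with Stallings' structure theorem on free splittings of fundamental groups of 3-manifolds. The standing assumptions (orientability, embeddability in $S^3$, connected boundary of genus $\geq 2$) force $M$ to be irreducible: any 2-sphere in $M$ bounds a 3-ball in $S^3$, and by tracking the connected surface $\partial M$ of genus $\geq 2$ on one side of that sphere, the ball must lie in $M$. I will use this irreducibility repeatedly to exclude the ``sphere case'' in Stallings' theorem.

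For (ii) $\Rightarrow$ (i), I would argue contrapositively: given a nontrivial disk-sum decomposition $M = M_{1} \natural M_{2}$ with neither $M_{i}$ a 3-ball, the splitting 2-disk $D$ is properly embedded in $M$; since each $M_{i}$ has connected boundary of genus $\geq 1$ (not a ball), $\partial D$ does not bound a 2-disk on $\partial M$, so $D$ is essential, contradicting $\partial$-irreducibility. For (iii) $\Rightarrow$ (ii), I would again go contrapositively: if $M$ admits an essential 2-disk $D$, then van Kampen's theorem applied to a regular neighborhood of $D$ gives either $\pi_{1}(M) = \pi_{1}(M_{1}) * \pi_{1}(M_{2})$ when $D$ separates, or $\pi_{1}(M) = \pi_{1}(M') * \mathbb{Z}$ when $D$ is non-separating (HNN over the trivial group). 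In the separating case, neither factor is trivial: a simply connected irreducible $M_{j} \subset S^{3}$ with connected boundary must be a 3-ball (using the classical fact that $S^{3}$-embeddable simply connected 3-manifolds with boundary are balls), which would force $\partial D$ to bound a disk on $\partial M$, contradicting essentiality. So $\pi_{1}(M)$ is freely decomposable.

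The substantive direction is (i) $\Rightarrow$ (iii). Again contrapositively, assume $\pi_{1}(M) = A \ast B$ nontrivially. By Stallings' theorem (the 3-manifold version due to Stallings and Epstein, using irreducibility of $M$), the free splitting is realized geometrically by a properly embedded essential 2-sphere or essential 2-disk; irreducibility rules out the sphere, so an essential disk $D$ exists. If $D$ is separating, cutting along it yields $M = M_{1} \natural M_{2}$ with neither $M_{i}$ a 3-ball (by the same argument as above), contradicting $\partial$-primeness. If $D$ is non-separating, I would exploit the hypothesis $g(\partial M) \geq 2$: take a parallel copy $D'$ of $D$ and tube $\partial D$ to $\partial D'$ along a suitable band in $\partial M$ chosen so that the resulting curve does not bound on $\partial M$; this produces a separating essential disk $\widetilde{D}$, and the argument concludes as before.

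The main obstacle is the geometric realization step in (i) $\Rightarrow$ (iii): converting the algebraic free splitting $\pi_{1}(M) = A \ast B$ into an embedded essential 2-disk. This is the deep input, requiring Stallings' theorem (via the loop theorem / Dehn's lemma machinery) and essentially uses both the irreducibility of $M$ coming from $M \hookrightarrow S^{3}$ and the genus assumption to upgrade a non-separating disk to a separating one. The remaining step, from \emph{some} essential disk to a nontrivial boundary connected sum, is standard but still depends on $g(\partial M) \geq 2$ to perform the tubing construction, which would fail for a solid torus.
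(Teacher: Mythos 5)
You should first note a mismatch of expectations: the paper offers no proof of this proposition at all --- it is imported verbatim from Suzuki \cite{suz1} as a known result, so there is no internal argument to compare yours against. Judged on its own merits, your reconstruction follows the standard route one would expect (and is close in spirit to Suzuki's original, which likewise rests on Alexander's theorem and loop-theorem/Kneser--Grushko machinery): deducing irreducibility of $M$ from $S^{3}$-embeddability plus connectedness of $\partial M$ is correct, the direction (ii) $\Rightarrow$ (i) and the separating-disk cases are fine (including the observation that a simply connected $S^{3}$-embeddable piece with connected boundary has $b_{1}=0$, hence sphere boundary, hence is a ball), and you correctly identify the one deep input, namely the geometric realization of a free splitting by an essential sphere or disk (Kneser's conjecture for bounded manifolds, Stallings--Epstein--Heil), with irreducibility excluding the sphere.

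There are, however, two points where your write-up has genuine holes. First, in (iii) $\Rightarrow$ (ii) you conclude from the non-separating case that $\pi_{1}(M)=\pi_{1}(M')\ast\mathbb{Z}$, ``so $\pi_{1}(M)$ is freely decomposable'' --- but this is false as stated: if $\pi_{1}(M')$ is trivial then $\pi_{1}(M)\cong\mathbb{Z}$, which is freely \emph{in}decomposable. You must rule this out, and the hypotheses do it: $\pi_{1}(M')=1$ forces $\partial M'$ to consist of spheres, so $M'$ is a ball (making $M$ a solid torus, contradicting $g(\partial M)\geq 2$) or $M'\cong S^{2}\times[0,1]$ (making $M$ contain a non-separating sphere, contradicting irreducibility). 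You invoke $g(\partial M)\geq 2$ only in the tubing step of (i) $\Rightarrow$ (iii), but it is equally essential here. Second, in that tubing step you assert without justification that a band can be ``chosen so that the resulting curve does not bound on $\partial M$''; this can be arranged (run the band over a handle of a positive-genus complementary region, noting that the band sum is automatically separating since its $\mathbb{Z}/2$-class is $2[D]=0$), but it needs an argument. A cleaner alternative avoids the band altogether: cutting $M$ along the non-separating disk $D$ and regluing exhibits $M$ as $M'$ with a $1$-handle attached along two disks in a connected boundary component, and abstractly this is $M\cong M'\natural(S^{1}\times D^{2})$ regardless of how the handle sits in $S^{3}$; then $\partial$-primeness forces $M'$ to be a ball, i.e.\ $M$ a solid torus, contradicting $g(\partial M)\geq 2$ directly.
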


We now present the notion of prime surfaces in $S^3$.
\begin{defi}[Prime surface \cite{tsu1}]
Let $F$ be a closed connected orientable surface in $S^3$ with $g(F)\geq1$.
Then, $F$ is said to be {\it prime} if for any decomposition $F\cong F_{1}\sharp_{\text{iso}}F_{2}$, either $F_{1}$ or $F_{2}$ is a surface of genus 0.
We note that every closed connected orientable surface of genus 1 is prime.
\end{defi}

About closed connected orientable prime surfaces in $S^3$, the following is known.

\begin{theo}[{\rm \cite{tsukui}}\label{theotsukui}]
Let $F$ be a closed connected orientable surface of genus $2$ in $S^3$.
Then, $F$ is prime if and only if either $\pi_{1}(V_{F})$ or $\pi_{1}(W_{F})$ is indecomposable with respect to free products.
\end{theo}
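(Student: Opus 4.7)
The plan is to apply Proposition \ref{propsuzu1} to translate between the algebraic condition on $\pi_{1}$ and a geometric condition on the complementary regions, and then relate the primality of $F$ to this geometric condition. Since $g(F)=2$, both $V_{F}$ and $W_{F}$ have connected boundary of genus $2\geq 2$, and neither is a $3$-ball (as the boundary is not a sphere). By Proposition \ref{propsuzu1}, for each of $V_{F}, W_{F}$, the three conditions of being $\partial$-prime, being $\partial$-irreducible, and having $\pi_{1}$ indecomposable with respect to free products are all equivalent. Therefore, the theorem reduces to showing that $F$ is prime if and only if at least one of $V_{F}, W_{F}$ is $\partial$-prime.

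For the easier contrapositive direction ($F$ not prime $\Rightarrow$ both $V_{F}$ and $W_{F}$ are not $\partial$-prime), suppose $F\cong F_{1}\sharp_{\rm iso}F_{2}$ with both $F_{i}$ of positive genus. Since $g(F)=2$ and genus is additive under isotopy sum, $F_{1}$ and $F_{2}$ are tori. The construction of the isotopy sum provides a $2$-sphere $S$ intersecting $F$ transversally in a single essential separating simple closed curve $c$ (the equator of the connecting $1$-handle). The disks $S\cap V_{F}$ and $S\cap W_{F}$ are properly embedded in $V_{F}$ and $W_{F}$ with common boundary $c$ on $F$, and they are essential since $c$ is non-contractible on $F$. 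Cutting $V_{F}$ along $S\cap V_{F}$ yields two components; each, after capping off the new disk boundary, is the complement of $F_{i}$ inside its respective $3$-ball, and since $F_{i}$ is a torus rather than a sphere, neither component is a $3$-ball. Hence $V_{F}$ is a non-trivial disk sum, and likewise for $W_{F}$.

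For the hard direction (contrapositive: if both $V_{F}$ and $W_{F}$ are not $\partial$-prime, then $F$ is not prime), take non-trivial disk-sum decompositions $V_{F}=V_{1}\natural V_{2}$ via a compressing disk $D_{V}$ with separating boundary $c_{V}$ on $F$, and $W_{F}=W_{1}\natural W_{2}$ via a compressing disk $D_{W}$ with separating boundary $c_{W}$ on $F$. (The boundaries are separating because cutting $F$ along them yields two pieces of positive genus, as the corresponding $V_{i}$ and $W_{i}$ have boundaries of genus $\geq 1$.) To construct an isotopy-sum decomposition of $F$, it suffices to find such disks with $c_{V}$ and $c_{W}$ isotopic on $F$: then $D_{V}\cup D_{W}$ can be made a $2$-sphere meeting $F$ in a single essential separating curve, realizing $F$ as an isotopy sum of two tori.

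The main obstacle is matching the boundaries. The plan is to isotope $D_{V}$ and $D_{W}$ into general position so that $|D_{V}\cap D_{W}|$ and $|\partial D_{V}\cap \partial D_{W}|$ are minimal, then apply a standard innermost-disk and innermost-arc cut-and-paste argument. Interior closed circles of $D_{V}\cap D_{W}$ can be removed using the irreducibility of $V_{F}$ and $W_{F}$, which follows from Lemma \ref{irreducible} and the irreducibility of $S^{3}$; arc components of $\partial D_{V}\cap \partial D_{W}$ on $F$ can be removed by surgery on the disks themselves, since the outermost arc on one disk cuts off a sub-disk which, together with the corresponding sub-disk of $F$, must bound a ball (again by irreducibility). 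Finally, combining the classification of essential separating simple closed curves on the genus-$2$ surface with the specific embedding in $S^{3}$, one forces $c_{V}$ and $c_{W}$ to be isotopic. This technical surgery argument constitutes the bulk of Tsukui's proof, and it is here that both the low genus (${}=2$) and the embedding into $S^{3}$ are essentially used.
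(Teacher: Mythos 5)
You should first be aware that the paper contains no proof of this statement at all: it is imported verbatim from Tsukui \cite{tsukui}, so your proposal can only be judged on its own terms, not against an argument in the text. Your framing is correct and matches how the paper uses the result downstream: since $g(\partial V_{F})=g(\partial W_{F})=2$, Proposition \ref{propsuzu1} applies to both complementary regions, and the theorem reduces to showing that $F$ is prime if and only if at least one of $V_{F}$, $W_{F}$ is $\partial$-prime. Your easy implication is also essentially sound: a nontrivial isotopy-sum sphere meets $F$ in one essential separating circle, its two complementary disks are properly embedded in $V_{F}$ and $W_{F}$, and cutting along them yields pieces with genus-one boundary, hence not $3$-balls, so both sides are nontrivial disk sums.

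The hard implication, however, has a genuine gap, and the surgery machinery you propose is set up incorrectly. The disks $D_{V}$ and $D_{W}$ are properly embedded in $V_{F}$ and $W_{F}$, whose interiors are disjoint; consequently $D_{V}\cap D_{W}\subset F$, there are no interior circles of intersection to remove by irreducibility, and $\partial D_{V}\cap\partial D_{W}$ is a finite set of transverse \emph{points} of $c_{V}\cap c_{W}$ on $F$, not arcs. The innermost-disk/outermost-arc scheme you describe is the standard tool for two surfaces in the \emph{same} $3$-manifold and cannot even be run in this situation. The real work is to reduce the geometric intersection number $\#(c_{V}\cap c_{W})$ by replacing the compressing disk on one side with a \emph{different} compressing disk of the same side (e.g., surgering $D_{V}$ along outermost pieces of $F$ cut off by $c_{V}\cup c_{W}$, while keeping its boundary essential and separating), and this requires genuinely $3$-dimensional control of how the disk-sum factors --- which by the solid torus theorem may be knotted solid tori or knot exteriors --- sit in $S^{3}$. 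Your appeal to "the classification of essential separating simple closed curves on the genus-$2$ surface" forces nothing by itself, since there are infinitely many isotopy classes of such curves; the useful fact is that two \emph{disjoint} essential separating curves on a genus-$2$ surface are isotopic, and your sketch supplies no mechanism for achieving disjointness. Since you explicitly defer exactly this step to "Tsukui's proof," the only hard implication of the theorem is left unproven.
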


Let $M$ be a compact connected orientable 3-manifold with connected boundary embedded in $S^3$.
Then, $M$ is said to be {\it reducibly embedded} if there exists a 2-sphere $S$ in $S^3$ such that $M\cap S$ is a properly embedded 2-disk $D$ in $M$, and that $D$ separates $M$ into two parts which are not homeomorphic  to 3-balls.
Otherwise, $M$ is said to be {\it irreducibly embedded}.
We call a reducibly embedded handlebody-knot a {\it reducible handlebody-knot}.
Similarly, we call an irreducibly embedded handlebody-knot an {\it irreducible handlebody-knot}.

Let us now focus on prime surfaces of genus $2$ embedded in $S^3$.
Let $F$ be a prime surface of genus $2$ in $S^3$.
We denote by $V_F$ and $W_F$ the closures of the connected components of $S^{3}\setminus F$.
By virtue of Theorem \ref{theotsukui} and Proposition \ref{propsuzu1}, we know that exactly one of $V_F$ or $W_F$ is $\partial$-prime.
On that assumption, we have the following proposition.

\begin{prop}\label{theo2}
Let $F$ be a closed connected orientable prime surface of genus $2$ in $S^3$.
We denote by $V_F$ and $W_F$ the closures of the connected components of $S^{3}\setminus F$.
We assume that exactly one of $V_{F}$ or $W_F$ is $\partial$-prime.
Let $V_{F}=H_{V}\cup C_{V}$ and $W_{F}=H_{W}\cup C_{W}$ be minimal genus Heegaard splittings of $V_{F}$ and $W_F$, respectively.
Then, exactly one of $H_V$ or $H_W$ is a reducible handlebody-knot, and the other is an irreducible handlebody-knot.
\end{prop}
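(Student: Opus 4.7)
The plan is to combine Theorem~\ref{theotsukui} and Proposition~\ref{propsuzu1} with disk-manipulation arguments that translate $\partial$-primeness (respectively, its failure) for $V_F$ and $W_F$ into irreducibility (respectively, reducibility) of the corresponding handlebody-knot in a minimal genus Heegaard splitting. By Theorem~\ref{theotsukui} together with Proposition~\ref{propsuzu1}, the primeness of $F$ is equivalent to exactly one of $V_F$ or $W_F$ being $\partial$-prime, so after relabeling I may assume $V_F$ is $\partial$-prime and $W_F$ is not. The goal then reduces to showing that $H_W$ is reducibly embedded and $H_V$ is irreducibly embedded.

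To prove $H_W$ is reducibly embedded, I would start from the failure of $\partial$-primeness of $W_F$: there is a non-trivial boundary connected sum decomposition $W_F \cong W_F^1 \natural W_F^2$ with neither factor a $3$-ball, realised by a properly embedded separating $2$-disk $D \subset W_F$ with $\partial D \subset F$. Passing to the minimal genus Heegaard splitting $W_F = H_W \cup_{S_W} C_W$ and minimising $|D \cap S_W|$, a standard innermost-curve argument on $D \cap S_W$ combined with the minimality of the splitting would allow me to isotope $D$ into $H_W$ so that it continues to separate $H_W$ into two pieces, neither of which is a $3$-ball. Completing this disk by a disk in $S^3 \setminus \text{int}(H_W)$ then produces a $2$-sphere witnessing that $H_W$ is a reducible handlebody-knot.

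For $H_V$, I would argue by contradiction: if $H_V$ were reducibly embedded, a reducing $2$-sphere would meet $H_V$ in a disk $D$ separating $H_V$ into two non-$3$-ball pieces. Using the product-plus-$1$-handle structure of $C_V$ in the minimal genus splitting $V_F = H_V \cup_{S_V} C_V$, I would push $D$ across $C_V$ to a properly embedded disk $\tilde D \subset V_F$ with $\partial \tilde D \subset F$, arguing that minimality of the splitting forces both sides of $\tilde D$ in $V_F$ to remain non-$3$-balls. This yields a non-trivial boundary connected sum decomposition of $V_F$, contradicting $\partial$-primeness.

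The main obstacle will be controlling the non-$3$-ball condition on both complementary pieces through these disk isotopies. The critical technical input is the minimality of the Heegaard splitting, which excludes the parallel-disk reductions that would otherwise force one complementary piece to become a $3$-ball; beyond this, the manipulations are standard Haken-style innermost-disk reductions applied to the intersection of the disk with the Heegaard surface.
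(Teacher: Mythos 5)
Your proposal has the correspondence inverted, and this is not a matter of labeling: the paper's proof establishes that the \emph{non}-$\partial$-prime side yields the \emph{irreducible} handlebody-knot, while the $\partial$-prime side yields the \emph{reducible} one — exactly the opposite of what you set out to prove. The paper's case (i) is already a counterexample to your version. Suppose $V_F$ fails to be $\partial$-prime because $V_F = V_F^1 \natural V_F^2$ with both factors solid tori; then $V_F$ is a genus-$2$ handlebody-knot, its minimal genus Heegaard splitting is the trivial one, and $H_V \cong V_F$. A reducing sphere for $H_V$ would split it into two genus-$1$ handlebody-knots separated in $S^3$, exhibiting $F = \partial H_V$ as an isotopy sum of two genus-$1$ surfaces and contradicting primeness of $F$; so the non-$\partial$-prime side gives an \emph{irreducible} handlebody-knot. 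Dually, $C_W \cup V_F$ is a handlebody (a compression body glued to a handlebody along $\partial_-$), so $H_W \cup (C_W \cup V_F)$ is a Heegaard splitting of $S^3$, and Waldhausen's theorem (Theorem \ref{walthm}) makes $H_W$ a trivial handlebody-knot of genus $\geq 2$, hence \emph{reducible}, even though $W_F$ is the $\partial$-prime side. (Also, your opening appeal to Theorem \ref{theotsukui} and Proposition \ref{propsuzu1} gives only that \emph{at least} one of $\pi_1(V_F)$, $\pi_1(W_F)$ is indecomposable; "exactly one" is a hypothesis of the proposition, not a consequence.)

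The inversion enters through the two technical steps, both of which have genuine gaps. First, reducibility of a handlebody-knot is a property of the \emph{embedding}, not of the abstract manifold: even after isotoping the $\natural$-decomposing disk $D$ into the Heegaard handlebody, there is no reason $\partial D$ should bound a disk in ${\rm cl}(S^3 \setminus H)$, so the step "completing this disk by a disk in $S^3\setminus{\rm int}(H_W)$" is unjustified — and in the example above it is impossible, since the separating disk of a knotted genus-$2$ handlebody extends to no such sphere when $F$ is prime. Second, in your contradiction argument you cannot simply "push $D$ across $C_V$": when the minimal splitting is non-trivial, $C_V$ is a product with $1$-handles attached, the reducing sphere may run essentially over the co-cores of those handles, and nothing in minimality prevents one complementary side of the pushed disk from becoming a $3$-ball. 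The paper avoids both problems by arguing forward rather than by disk transfer: it uses the solid torus theorem to identify each factor of $V_F = V_F^1 \natural V_F^2$ as a genus-$1$ handlebody-knot or the exterior of one, and then treats the three resulting cases by explicit constructions — standardness of Heegaard splittings of handlebodies for the $H_V$ side, and Waldhausen's theorem together with the concrete deformations (Figures \ref{Deformation}--\ref{triv} and the subsequent ones) for the $H_W$ side.
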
 

\begin{proof}
By the assumption, we assume that $V_{F}$ is not $\partial$-prime and $W_{F}$ is $\partial$-prime.
Since $V_{F}$ is not $\partial$-prime, there exists a decomposition $V_{F}=V_{F}^{1}\natural V_{F}^{2}$ such that neither $V_{F}^{1}$ nor $V_{F}^{2}$ is homeomorphic to a 3-ball $B^{3}$.
Then, by the use of the solid torus theorem, $V_{F}^{i}$ $(i=1, 2)$ are either a handlebody-knot of genus 1 or the exterior of a hanlebody-knot of genus $1$.

\noindent
(i) In case of both $V_{F}^{1}$ and $V_{F}^{2}$ are handlebody-knots of genus 1, since $F$ is a prime surface, then $V_{F}=V_{F}^{1}\natural V_{F}^{2}$ is an irreducible handlebody-knot.
Since the Heegaard splittings of a handlebody-knot are standard, then $H_{V}$ is an irreducible handlebody-knot.
On the other hand, $W_{F}$ is the exterior of the handlebody-knot $V_{F}$.
Then the Heegaard surfaces of $W_{F}$ are unknotted surfaces.
Therefore $H_{W}$ is a reducible handlebody-knot.

\noindent
(ii) In case of $V_{F}^{1}$ is a handlebody-knot of genus 1, and $V_{F}^{2}$ is the exterior of a non-trivial handlebody-knot of genus 1, let us assume that $V_{F}^{1}$ is obtained from a $3$-ball by attaching a $1$-handle, and $V_{F}^{2}$ is obtained from a 3-ball by removing a 2-handle. 
Then $V_{F}$ is constructed from $V_{F}^{1}$ and $V_{F}^{2}$ by connecting them by a $1$-handle $h$ so as to $h$ throughs the removed 2-handle from the $3$-ball at least one time up to isotopy.
Since $V_{F}^{1}$ is a handlebody-knot, then we have a minimal genus Heegaard splitting of $V_F$ by removing 2-handles from $V_{F}^{2}$.
Therefore by considering a minimal genus Heegaard splitting of $V_{F}^{2}$, we obtain the trivial handlebody-knot of some genus $\geq 2$ from $V_{F}^{2}$.
Such the trivial handlebody-knot is obtained from $V_{F}^{2}$ by removing 2-handles.
Then by applying the deformation depicted in Figure \ref{Deformation} on $V_{F}^{2}$ several times after removing $2$-handles, we have the trivial handlebody-knot of some genus.
Moreover, by the assumption of the 1-handle, the 1-handle throughs every removed 2-handle from $V_{F}^{2}$ (Figure \ref{triv}).
Hence we see that $H_{V}$ is an irreducible handlebody-knot.
On the other hand, we have 
$W_{F}=\text{cl}(S^{3} \setminus V_{F})
=\text{cl}(S^{3} \setminus (V_{F}^{1} \cup V_{F}^{2} \cup h))
=\text{cl}(((B_{2}^{3} \cup \{ \text{1-handle} \}) \setminus (B_{1}^{3} \setminus \{\text{$2$-handle}\})) \setminus h)
= \text{cl}(((B_{2}^{3} \setminus B_{1}^{3}) \cup \{\text{$1$-handles}\})\setminus h)$, where $B_{1}^{3}$ and $B_{2}^{3}$ are $3$-balls such that $\text{\{$2$-handle\}} \subset B_{1}^{3} \subset \text{int}(B_{2}^{3}$).
We note that the $1$-handle $h$ is removed from $\text{cl}((B_{2}^{3} \setminus B_{1}^{3}) \cup \{ \text{1-handles} \})$ so as to it throughs both attached $1$-handles(Figure \ref{$W_{F}$}).
Finally, in order to obtain a Heegaard splitting of $W_F$, we remove $2$-handles from $B_{2}^{3}$ and deform the resulting $3$-manifold(Figures \ref{remove2-handle} and Figure \ref{$W_{F}$heegaard}).
In this way, after removing appropriate numbers of $2$-handles, we obtain a minimal genus Heegaard splitting of $W_F$.
By applying the above procedure, we can see that $H_W$ is a reducible handlebody-knot.
The same holds for in case of $V_{F}^{2}$ is a handlebody-knot of genus $1$ and $V_{F}^{1}$ is the exterior of a handlebody-knot of genus $1$.

\noindent
(iii) In case of both $V_{F}^{1}$ and $V_{F}^{2}$ are the exterior of handlebody-knots of genus 1, we apply the same methods to $V_{F}^{1}$ and $V_{F}^{2}$ introduced in (ii).
Then we can show that $H_V$ is an irreducible handlebody-knot and $H_W$ is a reducible handlebody-knot. 
\end{proof}

\begin{figure}[H]
\begin{center}
\includegraphics[width=15cm, height=10cm]{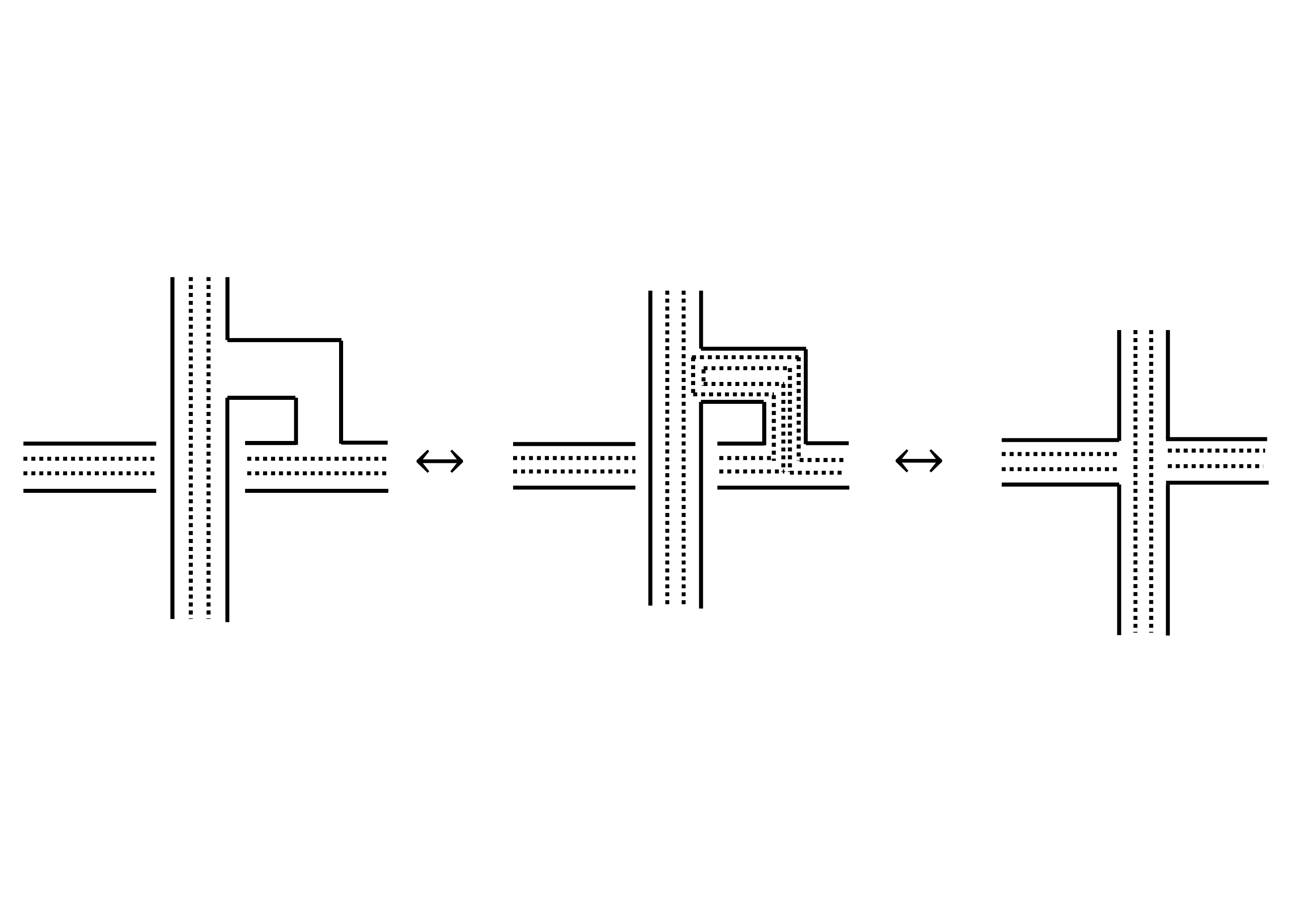}
\end{center}
\vspace{-13mm}
\caption{Deformation} 
\label{Deformation}
\end{figure}

\begin{figure}[H]
\begin{center}
\includegraphics[width=15cm, height=10cm]{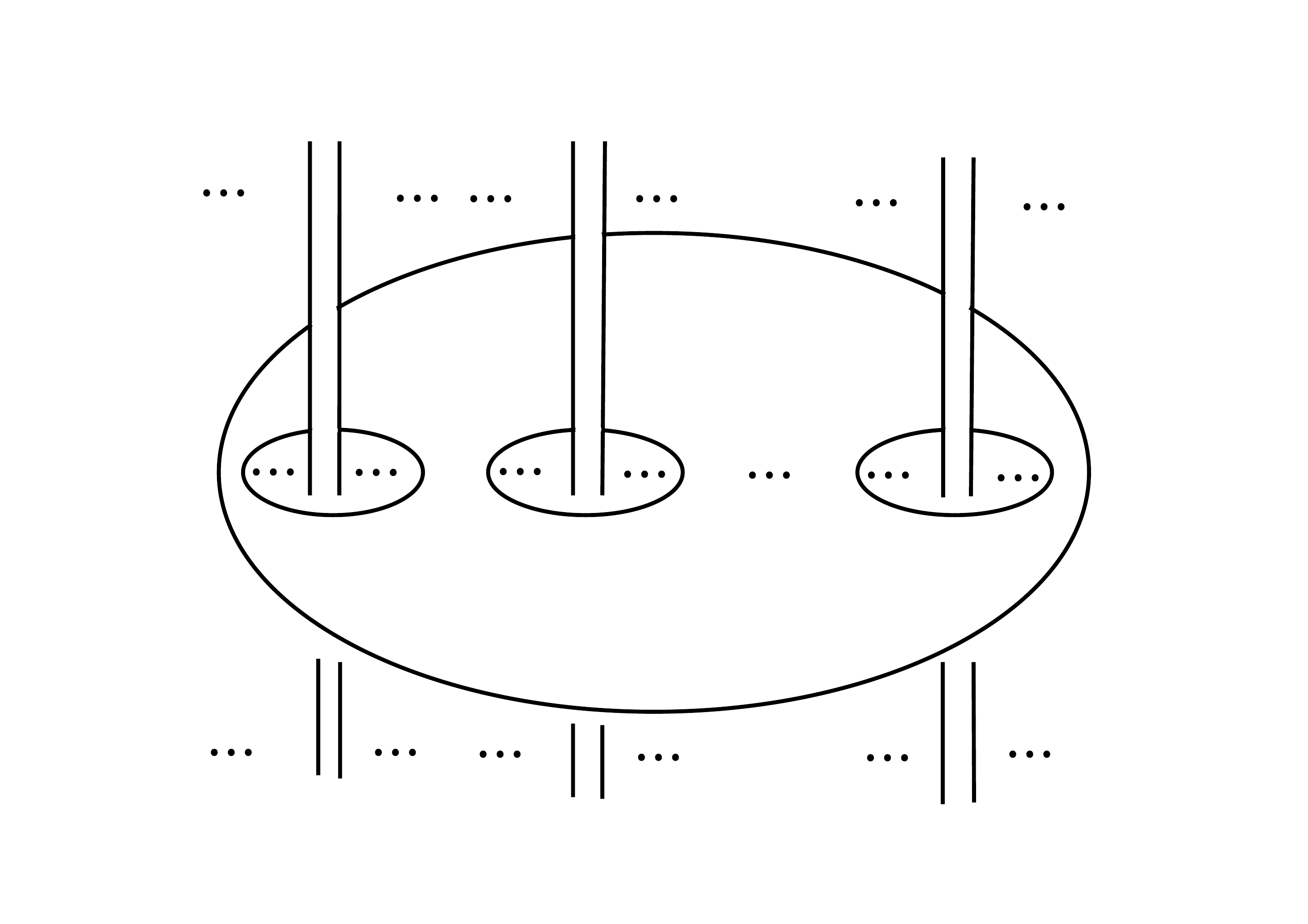}
\end{center}
\vspace{-13mm}
\caption{Part of the handlebody-knot $H_{V}$}
\label{triv}
\end{figure}

\begin{figure}[H]
\begin{center}
\includegraphics[width=15cm, height=10cm]{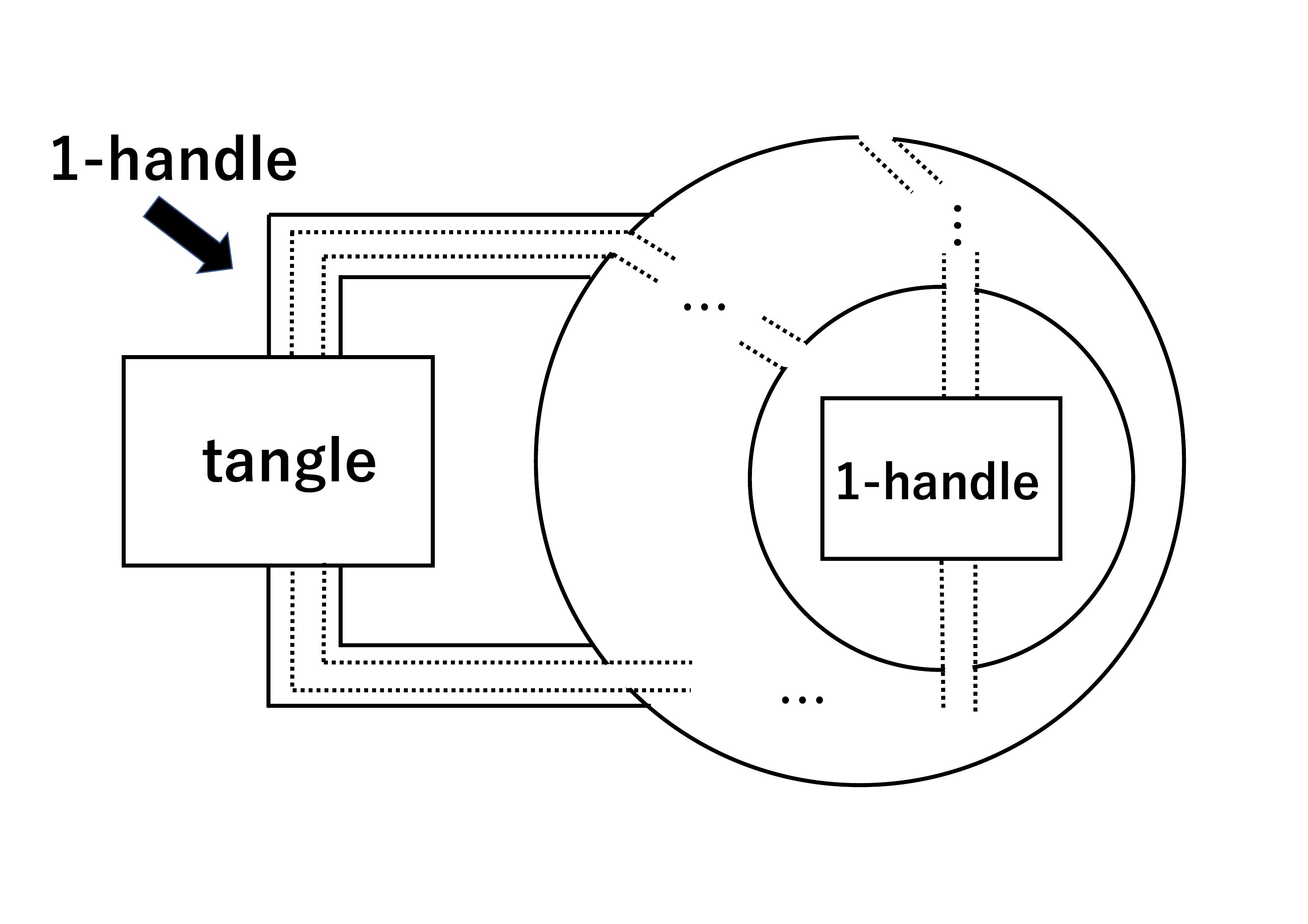}
\end{center}
\vspace{-13mm}
\caption{Exterior component $W_{F}$} 
\label{$W_{F}$}
\end{figure}

\begin{figure}[H]
\begin{center}
\includegraphics[width=15cm, height=10cm]{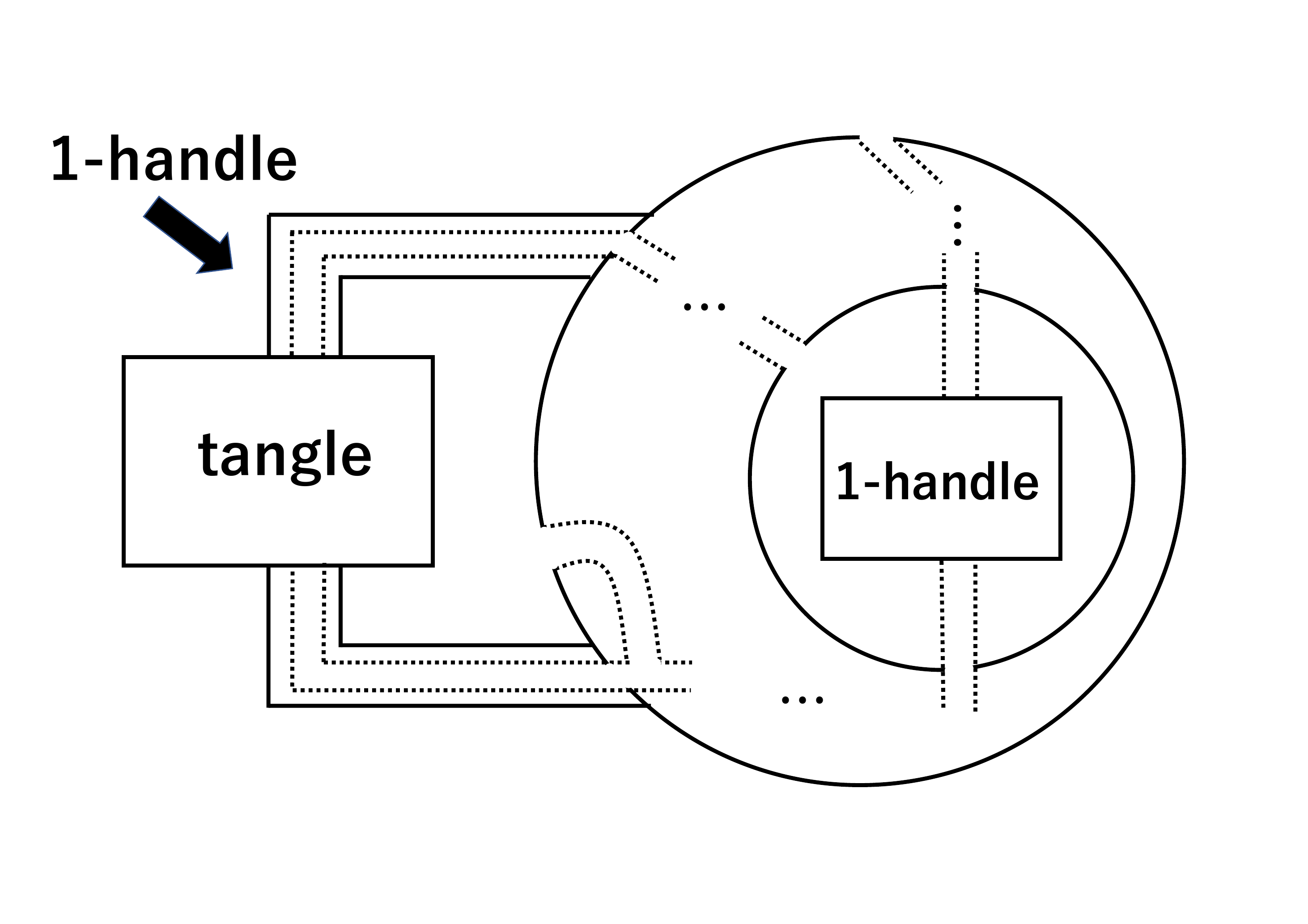}
\end{center}
\vspace{-13mm}
\caption{Removing of a 2-handle}
\label{remove2-handle}
\end{figure}

\begin{figure}[H]
\vspace{-5mm}
\begin{center}
\includegraphics[width=15cm, height=10cm]{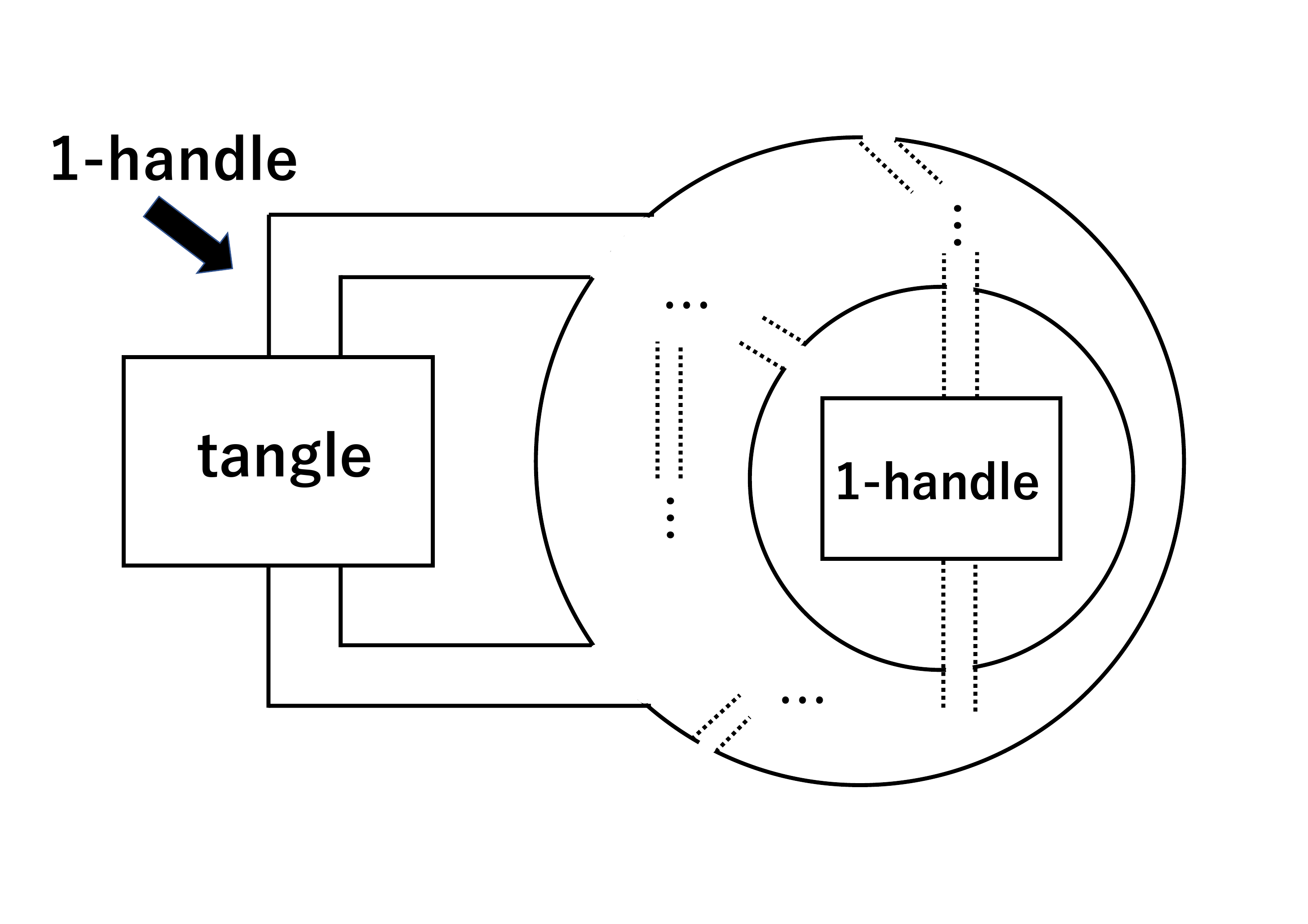}
\vspace{-10mm}
\caption{Procedure of a Heegaard splitting of $W_{F}$}
\label{$W_{F}$heegaard}
\end{center}
\end{figure}

\section*{Acknowledgement}
The author would like to express his deep gratitude to his supervisor, Professor Osamu Saeki, for his constant encouragement, many helpful comments, and discussions.
He would also like to thank Professor Masahico Saito for valuable comments and discussions.

Fujitsu Research, Kamiodanaka 4-1-1, Nakahara-ku, Kawasaki, Kanagawa, 211-8588, Japan.

{\it E-mail address}: h$\_$kurihara@fujitsu.com

\end{document}